\documentclass[11pt]{amsart}

\setlength{\oddsidemargin}{-.1in}
\setlength{\evensidemargin}{-0.1in}
\setlength{\textwidth}{6.6in}

\usepackage{amssymb,latexsym,amsmath,amsfonts}
\usepackage[mathscr]{eucal}
\usepackage{mathrsfs}
\usepackage{multicol}
\usepackage{graphicx}
\usepackage{caption}
\usepackage[abs]{overpic}
\usepackage{amscd}
\usepackage{amsthm}
\usepackage{amsxtra}
\usepackage[nointegrals]{wasysym}
\usepackage{bm}
\usepackage{calc}
\usepackage{float}
\usepackage[usenames,dvipsnames]{xcolor}
\usepackage{tikz-cd}
\usepackage{hyperref}
\hypersetup{colorlinks=true, citecolor=MidnightBlue, linkcolor=BrickRed}

\numberwithin{equation}{section}
\theoremstyle{definition}
\newtheorem{definition}{Definition}[section]
\theoremstyle{definition}

\newtheorem{remark}[definition]{Remark}

\theoremstyle{plain}
\newtheorem{theorem}[definition]{Theorem}
\newtheorem{lemma}[definition]{Lemma}
\newtheorem{cor}[definition]{Corollary}
\newtheorem{Prop}[definition]{Proposition}

\newcommand{\beas}{\begin{eqnarray*}}
\newcommand{\eeas}{\end{eqnarray*}}
\newcommand{\bes} {\begin{equation*}}
\newcommand{\ees} {\end{equation*}}
\newcommand{\be} {\begin{equation}}
\newcommand{\ee} {\end{equation}}
\newcommand{\bea} {\begin{eqnarray}}
\newcommand{\eea} {\end{eqnarray}}

\newcommand{\eps}{\varepsilon}
\newcommand{\zt}{\zeta}
\newcommand{\de} {\delta}


\newcommand\partl[2]{\dfrac{\partial{#1}}{\partial{#2}}}

\newcommand{\bdy}{\partial}

\newcommand{\D}{\mathbb{D}}
\newcommand{\cont}{\mathcal{C}}

\newcommand{\rea}{\operatorname{Re}}
\newcommand{\ima}{\operatorname{Im}} 

\newcommand{\A}{\mathcal{A}}
\newcommand{\B}{\mathcal B}

\newcommand{\K}{\mathcal{K}}
\newcommand{\M}{\mathcal{M}}
\newcommand{\N}{\mathcal{N}}
\newcommand{\R}{\mathcal{R}}

\newcommand{\T}{\mathbf T}

\newcommand\wt[1]{\widetilde{#1}}
\newcommand\floor[1]{\lfloor {#1} \rfloor}
\newcommand{\zobar}{\overline{z_1}}
\newcommand{\zbar}{\overline{z}}
\newcommand{\clD}{\overline\D}
\newcommand{\spa}{\operatorname{span}}
\newcommand{\sli}{\operatorname {slice}}


\newcommand{\CC}{\mathbb{C}^2}
\newcommand{\Cn}{\mathbb{C}^n}
\newcommand{\C} {\mathbb{C}} 
\newcommand{\rl}{\mathbb{R}}

\newcommand{\Z} {\mathbb{Z}}
\newcommand{\gl}{\operatorname{GL}}

\begin{document}
\title[Polynomially convex embeddings]{Polynomially convex embeddings of odd-dimensional closed manifolds}
\author{Purvi Gupta}
\address{Department of Mathematics, Indian Institute of Science, Bangalore 560012, India}
\email{purvigupta@iisc.ac.in}
\author{Rasul Shafikov}
\address{Department of Mathematics, University of Western Ontario,  London, Ontario N6A 5B7, Canada}
\email{shafikov@uwo.ca}
\thanks{P. Gupta is supported in part by a UGC CAS-II grant (Grant No. F.510/25/CAS-II/
2018(SAP-I))}
\begin{abstract} 
It is shown that any smooth closed orientable manifold of dimension $2k + 1$, $k \geq 2$, admits a smooth polynomially convex embedding into $\mathbb C^{3k}$. This improves by $1$ the previously known lower bound of  $3k+1$ on the possible ambient complex dimension for such embeddings (which is sharp when $k=1$). It is further shown that the embeddings produced have the property that all continuous functions on the image can be uniformly approximated by holomorphic polynomials. Lastly, the same technique is modified to construct embeddings whose images have nontrivial hulls containing no nontrivial analytic disks. The distinguishing feature of this dimensional setting is the appearance of nonisolated CR-singularities, which cannot be tackled using only local analytic methods (as done in earlier results of this kind), and a topological approach is required.
\end{abstract}
\maketitle


\section{Introduction}
The problem of finding the least Euclidean dimension into which all abstract manifolds of a fixed dimension admit embeddings with certain prescribed properties appears in many different contexts in geometry, as exemplified by the classical results of Nash, Grauert--Morrey, Remmert--Bishop--Narasimhan, etc. In a similar spirit, the topological consequences of imposing certain convexity-type conditions on manifolds in $\Cn$ has been a recurrent topic of interest in complex analysis; for instance, see \cite{Al93, Fo94a, ElCi15, NeSi16}. Along these lines, we study {\em the polynomially convex embedding problem}: what is the least $n$ such that all closed smooth real $m$-manifolds admit polynomially convex smooth embeddings into $\Cn$? 

\subsection*{Main result} We prove the following result in this paper. 

\begin{theorem}\label{t.main} 
Let $M$ be a closed orientable smooth manifold of real dimension $2k+1$, where $k\geq 2$. Then there is a smooth embedding $\iota:M\hookrightarrow\C^{3k}$ such that 
	\begin{enumerate}
		\item  $M'=\iota(M)$ is totally real except along a finite union of simple closed real curves in $M'$, 
		\item $M'$ is a polynomially convex subset of $\C^{3k}$, and
		\item any continuous function on $M'$ can be uniformly approximated  by holomorphic polynomials on $M'$.
	\end{enumerate}
\end{theorem}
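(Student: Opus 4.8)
The overall strategy I would follow splits the problem into a topological/smooth part and a function-algebraic part.

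First, I would aim to realize $M$ as an embedded submanifold of $\C^{3k}$ whose CR-singular locus is as small and structured as possible. By general position, any smooth closed $(2k+1)$-manifold embeds in $\R^{4k+2}=\C^{2k+1}$, but that ambient dimension is too large. One wants to get down to $\C^{3k}$, i.e. real codimension $2k-1$, which is odd — so the image can never be totally real everywhere (a totally real submanifold of $\Cn$ has dimension $\le n$, and $2k+1 > 3k$ fails only for... wait, $2k+1 \le 3k \iff k\ge 1$, so dimension is fine) — the real obstruction is that the expected dimension of the CR-singular set of a generic $(2k+1)$-submanifold of $\C^{3k}$ is $(2k+1) - 2(3k - (2k+1)) + \text{correction}$; a careful count shows CR-singularities are generically nonisolated, forming a $1$-dimensional set, hence the "finite union of simple closed curves" in conclusion (1). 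So step one: use an $h$-principle / jet-transversality argument (à la Gromov, Forstnerič, or the Gromov--Eliashberg totally real immersion theory) to produce an embedding $\iota_0$ whose CR-singular set $\Sigma$ is a closed $1$-manifold, and such that $\iota_0$ is totally real on $M\setminus\Sigma$. I would then want to normalize $\iota_0$ near $\Sigma$: along each circle of $\Sigma$ the complex tangency is one-dimensional, and I would try to put $\iota_0$ into a model form (a parametrized version of the Bishop/flat CR-singularity, or a "folding" model) in a tubular neighborhood of $\Sigma$, with prescribed, controllable (e.g. hyperbolic, or elliptic-with-no-disks) local hulls.

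Second — the polynomial convexity. The standard toolkit here is: (a) a totally real compact submanifold is locally polynomially convex and locally admits uniform polynomial approximation (Hörmander--Wermer, O'Farrell--Preskenis--Walsh, and the Baouendi--Trèves argument); (b) at a CR-singular point one uses the local model to understand the local hull; (c) to pass from local to global one needs $M'$ to be "polynomially convex in the large," typically arranged by composing with a suitable polynomial (or entire) automorphism of $\C^{3k}$ that spreads the manifold out — this is the standard trick of pushing the manifold into a totally real or Stein-exhausted position away from the bad set, combined with a gluing lemma (Kallin's lemma, or the convexity-preserving union results). The key analytic statement I expect to invoke is a "patching" theorem: if $M' = K_1 \cup K_2$ with $K_1, K_2$ polynomially convex, $K_1\cap K_2$ polynomially convex, and there is a polynomial separating their hulls appropriately, then $M'$ is polynomially convex, and $P(M') = C(M')$ follows from $P(K_i) = C(K_i)$ plus an exact-sequence / Mergelyan-type argument. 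So I would cover $M'$ by a neighborhood of $\Sigma$ (handled by the local model from step one) and the complement (totally real, handled by classical results), then glue.

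Third — conclusion (3), approximation. Granting polynomial convexity and that $M'$ is totally real off a $1$-dimensional set, I would cite/adapt the result that $P(M') = C(M')$ for a polynomially convex set that is totally real outside a set of finite length (or more precisely, outside a set that is "negligible" for approximation — here one can use that a smooth $1$-manifold has Hausdorff dimension $1 < 2$, so by the Stolzenberg / Alexander-type removability results, or by the O'Farrell--Preskenis--Walsh theorem which only requires the totally real condition on a dense open subset whose complement has a suitable measure/dimension bound). The CR-singular circles contribute no analytic structure if the local models are chosen to be of the "hyperbolic" (flat or non-disk) type, so there is no obstruction to approximation there.

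**Main obstacle.** The crux is step one in the neighborhood of $\Sigma$: producing the embedding so that the nonisolated CR-singularities come in a \emph{controlled} local model with polynomially convex local hull. Unlike the case of isolated CR-singularities ($k=1$), where one has the classical Bishop discs analysis and the whole picture is local, here the singular set is a positive-dimensional curve and purely local analytic normalization is not available (the complex-tangency line field and the transverse quadratic part vary along the circle, and there can be monodromy/twisting obstructions around the circle). So I expect the heart of the paper is a topological argument — using the orientability of $M$ and dimension $k\ge 2$ to kill the relevant obstruction classes — showing that $\iota_0$ can be isotoped so that along each circle of $\Sigma$ the normal-form data is trivialized (say, the untwisted flat model), after which the convexity and approximation steps proceed by the patching scheme above. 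The condition $k\ge2$ surely enters to have enough room for the general-position and obstruction-theory arguments, exactly as the abstract advertises ("a topological approach is required").
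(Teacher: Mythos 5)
Your outline correctly identifies where the difficulty lies (the nonisolated CR-singular circles and a possible twisting obstruction along them), but at exactly that point the proposal has no mechanism, and the resolution you guess at is not the one that works. You propose to isotope $\iota_0$ so that ``the normal-form data is trivialized (say, the untwisted flat model)'' and then patch via Kallin-type separation. In fact the relevant invariant of a tubular neighborhood of a CR-singular circle is the homotopy class of its frame map in $[S^1\times S^{2k-1},W_{2k+1,3k}]\cong\Z$ (resp.\ $\Z\oplus\Z_2$ for odd $k$), and for a nondegenerate (Beloshapka--Coffman) CR-singular circle this class has index $\pm 1$, never $0$: the tube is \emph{not} cobordant, through totally real cylinders, to a fully totally real (``untwisted flat'') model, so no isotopy can trivialize it. What the paper does instead is (i) prove, via the relative $h$-principle for the ample relation of totally real maps, that two parametrized tubes can be joined by a totally real cylinder iff their frame maps are homotopic; (ii) explicitly construct polynomially convex model tubes realizing \emph{every} class --- a bent-into-a-circle version of the Beloshapka--Coffman normal form, with an extra Hopf-type twist needed to hit the $\Z_2$ torsion class when $k$ is odd, polynomial convexity being checked by fibering over $\R^{2k-1}$ and using density of the algebra generated by $z$ and $c\bar z^2$ on small disks; and (iii) excise each original tube and glue in a TRC-cobordant polynomially convex model placed in a small disjoint ball. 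None of these steps appears in your plan, and the $\Z_2$ dichotomy for odd $k$ in particular shows that ``trivialize the local data'' cannot be the right statement.

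The global convexity and approximation steps are also handled differently, and your versions are too vague to carry the load. Rather than Kallin-lemma patching with a separating polynomial (which you would have to arrange globally for an arbitrary $M$ --- there is no indication how), the paper uses the Arosio--Wold relative perturbation theorem: once the CR-singular set $\Gamma$ sits inside a polynomially convex compact $K\subset M'$, the totally real remainder $M'\setminus K^{\circ}$ can be perturbed, fixing it over $K$, so that $K\cup(\text{perturbed part})$ is polynomially convex. For conclusion (3), ``totally real off a set of Hausdorff dimension $1$'' is not by itself a removability statement you can quote; the argument that works is O'Farrell--Preskenis--Walsh applied with $X_0=\Gamma$, using that $\Gamma$ is itself polynomially convex (so $C(\Gamma)=\mathcal P(\Gamma)$ by Oka--Weil), followed by Oka--Weil on the polynomially convex $M'$. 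So the proposal is a reasonable sketch of the ambient landscape, but the central construction --- classification of tubes by frame-map homotopy and the polynomially convex models realizing all classes --- is missing, and the guessed substitute would fail.
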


A compact set $K\subset\mathbb C^n$ is polynomially convex if its polynomially convex hull, defined as 
$$\widehat K = \{ z\in \mathbb C^n: |P(z)|\leq \sup\nolimits_K|P|  \text{ for all holomorphic polynomials } P\ \text{on}\ \Cn\},$$
coincides with $K$. Embeddability of manifolds as polynomially convex compacts in $\mathbb C^n$ is important in view of the 
Oka--Weil theorem: if $K=\widehat K$, then any function holomorphic on some neighborhood of $K$ can be uniformly approximated on $K$ by holomorphic polynomials. A partial converse also holds: if all continuous functions on $K$ can be uniformly approximated on $K$ by holomorphic polynomials, then 
$K = \widehat K$. Thus, $(3)$ implies $(2)$ in Theorem~\ref{t.main}. However, the polynomial embeddability of $M$ is the crux of the matter, and poses the main technical challenge. 

Until recently, the best known bound on the optimal embedding dimension for polynomially convex embeddings was the same as that for {\em totally real embeddings}, i.e., where the image admits no complex lines in any of its tangent planes. These two types of embeddings are related by the result that any $m$-dimensonal totally real submanifold in $\Cn$, $m<n$, can be made polynomially convex after a small perturbation; see~\cite{FoRo93}, \cite{Fo94} and \cite{LoWo09}. It is known that if $n\geq \floor{\frac{3m}{2}}$, then any closed $m$-dimensional manifold can be embedded into $\mathbb C^n$ as a totally real submanifold. This bound is sharp for totally real embeddability (see \cite{HoJaLa12}), that is, if $n<\floor{\frac{3m}{2}}$ then one cannot always avoid {\it CR-singularities}, i.e., points where the tangent plane contains nontrivial complex subspaces. When $m\leq 3$, this bound is also sharp for polynomially-convex embeddability as no real $n$-dimensional submanifold in $\Cn$ can be polynomially convex, see \cite[\S 2.3]{St07}. However, when $m\geq 4$, the extent to which the bound for polynomially convex embeddability can be improved is not known. 

In our earlier paper \cite{GuSh20}, this bound was improved by one for even-dimensional manifolds.  That is, for $k\geq 2$, $(2k)$-dimensional manifolds admit polynomially convex smooth embeddings into $\C^{3k-1}$. Here, generic CR-singularities are isolated points. As in the case of totally real embeddings, the method of small perturbations works in this setting, with some additional local analysis required near the singularities. Theorem~\ref{t.main} extends this improvement of bound to all odd-dimensional manifolds of dimension at least $5$, under the assumption of orientability. In this setting, CR-singularities generically form closed real curves on the manifold. While isolated CR-singularities are well-studied in all relevant dimensions, starting from the seminal work~\cite{Bi65}, the literature on the global properties of CR-singular sets of positive dimension is rather sparse; see \cite{Do95} and \cite{We69} for some global results. Such sets carry nontrivial topology whose properties are not very well understood. Thus, to obtain embeddings with prescribed convexity properties, one can no longer rely on local analysis alone and must use topological methods. 

\subsection*{Idea of the proof} We briefly describe the construction that yields Part $(2)$ of Theorem~\ref{t.main}. First, by a standard argument, we observe that a generic embedding of $M$ is totally real except along a finite union of disjoint simple closed curves of CR-singularities. A tubular neighborhood in the embedded manifold of such a curve is an instance of a {\em tube enclosing a CR-singular curve in $\C^{3k}$} (see Definition~\ref{def_tubes}). We call two such disjoint tubes {\em totally real cylindrically (TRC) cobordant} if their boundaries can be joined by a totally real cylindrical manifold within $\C^{3k}$. Using a relative $h$-principle, we show that the TRC cobordism class of a (parametrized) tube is determined by the homotopy class of its {\em frame map} --- a map from $S^1\times S^{2k-1}$ into the complex Stiefel manifold $W_{2k+1,3k}$ induced by a field of frames on the given tube, see \eqref{eq_Fhat}. From this, we deduce that the possible homotopy classes of all such maps can be enumerated by $\Z$ when $k$ is even, and by $\Z\oplus \Z_2$ when $k$ is odd. We then construct models of tubes (enclosing CR-singular curves) that realize all the possible enumerations, and satisfy the crucial property that they are polynomially convex. When $k$ is even, the construction is a straightforward modification of the Beloshapka--Coffman normal form discussed in Subsection~\ref{subsec_BCform}. However, when $k$ is odd, this construction has to be adjusted using a Hopf fibration to account for the torsion that appears in this setting. To summarize, a small neighborhood of any CR-singular curve of a generic embedding of $M$ is TRC cobordant to one of the constructed polynomially convex models. Effectively, this gives the following procedure: for each CR-singular curve in the (embedded) manifold, we cut out such a tubular neighbourhood, and glue instead a TRC cobordant polynomially convex tube using a cylindrical manifold that joins their boundaries. Finally, using the fact that the CR-singular set of the new embedding admits a polynomially convex neighborhood in the manifold, we perform a small perturbation (using \cite{ArWo17}) to make the manifold globally polynomially convex.

Part (3) of Theorem~\ref{t.main} now follows from a combination of three classical approximation results, including the aforementioned result due to Oka--Weil.  This part of the theorem can be paraphrased as follows: for any closed $(2k+1)$-manifold $M$ as in Theorem~\ref{t.main}, there exist $3k$ smooth functions on $M$ that generate $C(M)$, the algebra of continuous complex-valued functions on $M$.

\subsection*{Scope for further improvements} There are several questions that remain open in the context of Theorem~\ref{t.main}. First, the assumption of orientability may simply be an artefact of our proof. In the absence of orientability, one will have to account for ``nonorientable tubes" enclosing CR-singular curves, but the broader technique still holds promise. Second, while there are examples of closed $m$-dimensional real manifolds, $m\geq 2$, that cannot be embedded into $\mathbb C^n$ when $n=\floor{\frac{3m}{2}}-1$, see \cite{HoJaLa12}, all the known examples are nonorientable when $m=2k+1$ and $k$ is odd, and it is not clear whether $n=\floor{\frac{3m}{2}}$ is sharp for orientable manifolds in this case. We note that totally real embeddability would give an easier proof of $(2)$ and a stronger version of $(3)$ in Theorem~\ref{t.main}. Finally, in the case of isolated CR-singularities, any embedding of a compact manifold {\em with boundary} can be perturbed to be totally real and polynomially convex (see~\cite{GuSh20}). It is not clear whether our proof can be modified to obtain totally real polynomially convex embeddings of $(2k+1)$-dimensional manifolds with boundary in $\C^{3k}$. 

Whether the polynomially convex embedding dimension can be improved further than $\floor{\frac{3m}{2}}-1$, for $m\geq 6$, remains an open
problem. If one only seeks topological embeddings, the sharp bound is known from \cite{VoZa71}, wherein polynomially convex topological embeddings of all $m$-dimensional smooth manifolds (in fact, even simplicial polytopes) into $\mathbb C^{m+1}$ have been constructed. However, these embeddings are highly nonsmooth. For smooth embeddings, the next possible dimensional improvement already poses several technical difficulties. One has to reckon with surfaces of CR-singularities. This causes some difficulty in both enumerating all the topological possiblities (in the TRC cobordism sense) and constructing polynomially convex models. Further, while the Beloshapka--Coffman normal form is still available, the nondenegeracy required to invoke this form cannot be guaranteed everywhere as degenerate CR-singular points generically form a set of codimension two in the manifold, and can no longer be separated from the CR-singular set by a simple transversality argument. 

\subsection*{Embeddings with no analytic disks in their hulls} The technology developed to prove Theorem~\ref{t.main} can be used to lower the bound for another embedding problem (raised in \cite{IzSt18}): {\em what is the least $n'$ such that every compact $m$-dimensional smooth manifold can be smoothly
embedded into $\C^{n'}$ as some $\Sigma$ with $\widehat\Sigma\setminus\Sigma$ nonempty but
containing no analytic disk, i.e., there is no nonconstant holomorphic map from the unit disk into $\widehat\Sigma\setminus\Sigma$?} This can also be asked for rational hulls where, if $K\subset\Cn$ is a compact set, its {\em rational hull} is 	\bes
		h_r(K)=\{z\in\Cn:p(z)\in p(K)\ \text{for all holomorphic polynomials}\ P\ {on}\ \Cn\}.
	\ees
Part of the motivation for this problem comes from the fact that all the classical constructions of nontrivial hulls with no analytic disks involve highly nonsmooth sets. For smooth embeddings, the best known bound for surfaces is $n'\leq 3$, obtained in \cite{IzSt18} via explicit embeddings. For higher dimensions, $n'\leq \floor{\frac{3m}{2}}-1$ when $m$ is even, as seen in \cite{GuSh20}, and $n'\leq\floor{\frac{3m}{2}}$ when $m$ is odd, as proved in \cite{ArWo17}. For orientable manifolds, we improve the latter as follows.

\begin{theorem}\label{thm_anstructure} Given any closed orientable smooth manifold $M$ of real dimension $2k+1$, $k\geq 2$, there is a smooth embedding of $M$ into $\C^{3k}$ with image $\Sigma$ so that $\widehat{\Sigma}\setminus \Sigma$ is nonempty but contains no analytic disk, and $\widehat \Sigma=h_r(\Sigma)$.	
\end{theorem}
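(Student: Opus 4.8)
The plan is to adapt the construction behind Theorem~\ref{t.main} by replacing one of the polynomially convex tube models with a model whose hull is \emph{nontrivial} yet free of analytic disks, while keeping all other tubes polynomially convex. The key observation is that the TRC cobordism machinery is insensitive to whether the model tube is itself polynomially convex: what matters is that, after cutting out a tubular neighborhood of each CR-singular curve of a generic embedding and gluing in a TRC cobordant model along a totally real cylinder, the CR-singular set of the resulting embedding admits a ``controlled'' neighborhood in the manifold. So first I would run the proof of Theorem~\ref{t.main} essentially verbatim through the step where $M$ has been generically embedded, its CR-singular curves identified, and each given a tubular neighborhood that is a tube enclosing a CR-singular curve in $\C^{3k}$; then I would invoke the enumeration of TRC cobordism classes (by $\Z$ or $\Z\oplus\Z_2$).

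Next, I would fix \emph{one} CR-singular curve and, instead of gluing in a polynomially convex model in its cobordism class, glue in a model tube $T_0$ with $\widehat{T_0}\setminus T_0 \neq \emptyset$ and containing no analytic disk, chosen in the same TRC cobordism class. For all other CR-singular curves I would glue in the polynomially convex models exactly as in Theorem~\ref{t.main}. The natural source for such a $T_0$ is the machinery of \cite{ArWo17}, which already produces, in the odd-dimensional case, embeddings with nontrivial disk-free hulls; the point is to localize one of their constructions inside a single tube so that it fits the frame-map/cobordism bookkeeping. Concretely, I would take a standard polynomially convex model tube and perturb it near an arc of the CR-singular curve using the ``bump'' of \cite{ArWo17} (a small totally real perturbation off the singular set that creates a nontrivial hull with no analytic disk), checking that this perturbation is supported away from the boundary of the tube and hence does not change its TRC cobordism class.

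After the gluing, the manifold $M_1 = \iota_1(M)$ has CR-singular set equal to the union of the model curves, with a neighborhood that is polynomially convex except inside $T_0$, where the hull is nontrivial but disk-free. I would then invoke the perturbation result of \cite{ArWo17} (as in the last step of the proof of Theorem~\ref{t.main}) to make $M_1$ globally have a disk-free hull: the point is that \cite{ArWo17}'s perturbation preserves, and can only shrink, the hull, and does not introduce analytic disks. One must check that the perturbation is small enough that $\widehat{\Sigma}\setminus\Sigma$ remains nonempty --- this is where the fact that the nontriviality is ``created locally'' inside $T_0$ with a definite size is used: a sufficiently small perturbation cannot kill it. Finally, for the statement $\widehat\Sigma = h_r(\Sigma)$, I would note that $h_r(\Sigma) \subseteq \widehat\Sigma$ always, and for the reverse inclusion use that the hull is generated locally by the same mechanism as in \cite{ArWo17, IzSt18}, where the nontrivial part of the hull is in fact a rational hull (the disk-free hulls constructed there are built from fibers over polynomial or rational images), so that the extra points of $\widehat\Sigma$ already lie in $h_r(\Sigma)$.

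The main obstacle I expect is the \textbf{compatibility of the \cite{ArWo17}-type local model with the TRC cobordism framework}: one needs a single model tube that simultaneously (i) lies in a prescribed TRC cobordism class (so that it can be glued into an arbitrary generic embedding), (ii) is totally real off its CR-singular curve, (iii) has a boundary that bounds a totally real cylinder joining it to the old tubular neighborhood, and (iv) carries a nontrivial disk-free hull of a definite size that survives the final global perturbation. Verifying (iv) --- that the nontrivial hull is genuinely localized and robust --- requires care, since hulls are not local objects; one must argue that the bump's contribution to the hull is confined to a small polynomially convex neighborhood, which is exactly the kind of estimate supplied by \cite{ArWo17}. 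A secondary technical point is confirming that the disk-free hull in the model is a rational hull, which should follow from the explicit nature of the \cite{ArWo17, IzSt18} constructions but needs to be checked against the precise form of the model tube used here.
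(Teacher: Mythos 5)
Your overall strategy (run the Theorem~\ref{t.main} machinery, use the neighborhood replacement result to splice in one tube carrying a nontrivial disk-free hull, then apply the Arosio--Wold perturbation) is in the spirit of the paper, but your implementation has genuine gaps, and the paper's actual route avoids them. The paper does \emph{not} modify a model at a CR-singular curve: it keeps all CR-singular tubes polynomially convex exactly as in Theorem~\ref{t.main}, and instead glues a \emph{fully totally real} tube $\A$ (Subsection~\ref{subsec_Alexander}, Lemma~\ref{lem_Alexander}), containing an Alexander set $E\subset$ a neighborhood of the diagonal of $\mathbb{T}^2$, into a small totally real ball of the embedded manifold; since both that tube and a trivial tube in a ball have trivial frame-map class, Lemma~\ref{lem_TRcob} and Proposition~\ref{prop_NRR} apply with no index bookkeeping. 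Your $T_0$ --- a tube around a CR-singular curve that simultaneously lies in the prescribed TRC class and carries a disk-free nontrivial hull produced by an ``\cite{ArWo17} bump'' --- is never constructed; \cite{ArWo17} and \cite{IzSt18} do not supply such a local bump as a black box, their mechanism being precisely Alexander's torus set, which needs a flat torus piece and is most naturally housed in a totally real region, not at the CR-singular locus.

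The more serious gap is your control of the hull at the final step. You argue that the Arosio--Wold perturbation ``preserves, and can only shrink, the hull'' and that ``a sufficiently small perturbation cannot kill it.'' Polynomial hulls are not lower semicontinuous: a $\cont^s$-small perturbation of $\Sigma$ away from $E$ can in principle destroy the nontrivial hull, and the whole content of \cite{ArWo17} is that such perturbations \emph{do} collapse hulls of totally real pieces. The paper handles this by putting $\widehat E$ itself into the fixed polynomially convex set, $X=\widehat E\cup K$ (possible by Lemma~\ref{lem_unions}, since $\A$ sits in a small ball disjoint from the manifold), so the Arosio--Wold conclusion $\widehat{X\cup f_\eps(N)}=X\cup f_\eps(N)$ pins down $\widehat\Sigma=\Sigma\cup\widehat E$ exactly; nonemptiness of $\widehat\Sigma\setminus\Sigma$ is then proved by contradiction via O'Farrell--Preskenis--Walsh (if $\widehat E\subset\Sigma$, every compact subset of $\Sigma$ would be polynomially convex, contradicting $E\neq\widehat E$) --- a point you do not address at all. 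Finally, your rational-hull argument (``the hull is built from fibers over polynomial or rational images'') is not a proof; the paper uses Izzo's generalized argument principle for $E$ \cite{Iz19} together with Stolzenberg's theorem, which requires a compact $Y\supset E$ inside $\Sigma$ with $\check{\mathrm{H}}^1(Y,\Z)=0$ --- satisfied because $\A$ lies in a totally real ball $B'\subset\Sigma$. In your setup $E$ would sit inside a tube diffeomorphic to $S^1\times D^{2k}$, whose first \v{C}ech cohomology is $\Z$, so this argument does not directly apply and you would need a substitute.
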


\subsection*{Structure of the paper} In Section~\ref{sec_CRprelim}, we collect some basic facts about CR-singularities that allow us to make certain simplifying assumptions in our proofs. The Beloshapka--Coffman normal form, in particular, is discussed in Subsection~\ref{subsec_BCform}. In Section~\ref{sec_ATprelim}, we lay the groundwork for the topological aspects of the proofs. After establishing some essential notation in Subsection~\ref{subsec_notn}, we explicitly compute generators of certain homotopy groups associated with the Stiefel manifold $W_{2k+1,3k}$ in Subsections~\ref{subsec_homStief} and~\ref{subsec_homStief2}. Section~\ref{sec_tubes} is devoted to the study of tubular neighborhoods of certain curves in $(2k+1)$-dimensional submanifolds of $\C^{3k}$. For such neighborhoods, we introduce a topological invariant (index) in Subsection~\ref{subsec_index}, and an equivalence relation (totally real cylindrical cobordism) in Subsection~\ref{subsec_trcob}. We construct polynomially convex representatives of all possible equivalence classes in Section~\ref{subsec_models}. Finally, the proofs of the main results are carried out in Section~\ref{sec_proofs}.
\section{CR-geometric preliminaries}\label{sec_CRprelim}

We recall some facts about CR-singularities of real $m$-dimensional submanifolds in $\Cn$. Putting together these facts, we obtain Part $(1)$ of Theorem~\ref{t.main}. 

 \subsection{CR-singularities} The {\it CR-dimension} of a real submanifold 
$M\subset \mathbb C^n$ at a point $p\in M$ is the (complex) dimension of the maximal complex linear subspace $T^c_p M$ of $T_pM$, the tangent plane of $M$ at $p$ (considered as a subset of $\mathbb C^n$). If $\dim M \le n$ and $M$ is in general position, then it will be generically totally real, i.e., the CR-dimension of $M$ will be $0$ almost everywhere. In this case, a point $p\in M$ is called 
{\it CR-singular} if $\dim_\C T^{c}_pM\geq 1$. Furthermore, we call $p \in M$ a {\em CR-singularity of order $\mu$} if $\dim_{\C} T^{c}_p M = \mu$. Given $\mu\in\mathbb N_0$, if
$S_\mu$ denotes the set of CR-singular points of $M$ of order $\mu$, then $S_\mu$ is either empty 
or is a (not necessarily closed) submanifold of $M$ of dimension $m - (2\mu^2 + 2\mu (n-m))$ and  
\begin{equation}\label{e.smu}
\overline{S_\mu} = \bigcup_{\nu\ge \mu} S_\nu,  
\end{equation}
see ~\cite{Do95} for more details. From this it follows that if $n \ge \lfloor 3m/2 \rfloor$, then any $m$-dimensional manifold admits a totally real embedding into 
$\mathbb C^n$. If $m = 2k$ and $n=3k-1$, then all the CR-singularities of a generic $M$ are isolated points, while if 
$m= 2k+1$ and $n=3k$, then a generic $M$ has a one-dimensional set of CR-singularities of order $1$ 
and no CR-singularities of order $2$ or higher. Thus, we have

\begin{lemma}\label{lem_emb1} Let $\iota:M\hookrightarrow\C^{3k}$ be a generic smooth embedding of a smooth $(2k+1)$-dimensional closed manifold. Then the set of CR-singularities of $\iota(M)$ is a finite union of smooth simple closed real curves.
\end{lemma}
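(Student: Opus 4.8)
The plan is to read off the lemma from the structure theory of CR-singular sets recalled above, together with two standard facts of differential topology. First, since $2\cdot 3k = 6k \geq 2(2k+1)+1$ for $k\geq 2$, Whitney's theorem guarantees that a generic smooth map $M\to\C^{3k}$ is an embedding, so it suffices to analyze, for a generic map, the locus where its image fails to be totally real. Trivializing $T\C^{3k}$ as $M\times\C^{3k}$, an embedding $\iota$ has a Gauss map $\gamma_\iota\colon M\to \mathrm{Gr}_{\rl}(2k+1,6k)$, $p\mapsto T_p\iota(M)$, and the CR-singularity order of $\iota(M)$ at $\iota(p)$ is precisely the CR-dimension of the real $(2k+1)$-plane $\gamma_\iota(p)$.

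Next I would stratify $\mathrm{Gr}_{\rl}(2k+1,6k)$ into the loci $\Sigma_\mu$ of real $(2k+1)$-planes of CR-dimension exactly $\mu$. Each $\Sigma_\mu$ is a locally closed real-algebraic submanifold of codimension $2\mu^2+2\mu(k-1)$ with $\overline{\Sigma_\mu}=\bigcup_{\nu\geq\mu}\Sigma_\nu$ --- this is the linear-algebra model underlying \eqref{e.smu}, and the codimension count is part of the normal-form analysis of \cite{Do95}. The condition that the $1$-jet of a map at a point be injective with image in $\Sigma_\mu$ cuts out a corresponding union of submanifolds of the $1$-jet bundle $J^1(M,\C^{3k})$, so Thom's jet-transversality theorem yields a generic (residual) set of maps $\iota$ for which $j^1\iota$ is transverse to every one of these; taking such an $\iota$ that is in addition an embedding, $S_\mu=\gamma_\iota^{-1}(\Sigma_\mu)$ is a submanifold of $M$ of dimension $(2k+1)-\bigl(2\mu^2+2\mu(k-1)\bigr)$.

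The dimension count now finishes the proof. For $\mu=1$ this is $(2k+1)-2k=1$; for $\mu\geq 2$ it is at most $(2k+1)-(4k+4)<0$, so $S_\mu=\emptyset$ for every $\mu\geq 2$. Hence \eqref{e.smu} gives $\overline{S_1}=\bigcup_{\nu\geq 1}S_\nu=S_1$, so $S_1$ is closed in the closed manifold $M$ and therefore compact. A compact smooth $1$-manifold without boundary is a finite disjoint union of circles, and as $S_1$ is an embedded submanifold each component is a smooth simple closed curve; transporting by the diffeomorphism $\iota$ shows the CR-singular set of $\iota(M)$ is a finite union of smooth simple closed real curves.

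The one place that requires genuine care is the transversality step: one must confirm that the CR-dimension strata $\Sigma_\mu$ really are submanifolds of the asserted codimension, and that the subset of $J^1(M,\C^{3k})$ they determine is stratified by submanifolds, so that jet transversality applies verbatim. The needed local description of $\Sigma_\mu$ (hence the codimension formula) is exactly the content of the structure results of \cite{Do95} quoted above, so in the write-up this can be cited rather than reproved; the remaining ingredients --- Whitney's embedding theorem, Thom jet transversality, and the classification of compact $1$-manifolds --- are entirely standard.
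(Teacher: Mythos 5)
Your proof is correct and follows essentially the same route as the paper: the lemma is obtained from the stratification of the CR-singular set by order cited from \cite{Do95}, the codimension formula $2\mu^2+2\mu(n-m)$ with $n-m=k-1$, and the dimension count showing $S_1$ is one-dimensional while $S_\mu=\emptyset$ for $\mu\geq 2$, so that the CR-singular set is a compact $1$-manifold and hence a finite union of circles. The only difference is that you make explicit, via the Gauss map and Thom jet transversality, what the paper's appeal to ``general position'' means, which is a faithful elaboration rather than a different argument.
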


\subsection{The Beloshapka--Coffman normal form}\label{subsec_BCform} Building on Beloshapka's work for $n=5$, see \cite{Be97}, Coffman introduced in \cite{Co97} a notion of {\em nondegeneracy} for a CR-singular point of an $m$-dimensional manifold $M$ in $\Cn$, where $\frac{2}{3}(n+1)\leq m<n$, $n\geq 5$. He showed that near a nondegenerate CR-singular point, $M$ is locally {\em formally} equivalent to the {\em Beloshapka--Coffman normal form}:
	\bea\label{eq_BCform}
\B^{m,n}=
\left\{(z_1,...,z_n)\in\C^{n}:
\begin{aligned}
	&  \ima z_j=0,\ 2\leq j\leq m-1,\\
	& z_{m}=\zobar^2,\\
	&z_{m+1}=|z_1|^2+\zobar(\rea z_2+i\rea z_3),\\
	&z_\ell
		=\zobar (\rea z_{2(\ell-m)}+i\rea z_{2(\ell-m)+1}),\ m+2\leq \ell\leq n\\
\end{aligned}
\right\}.
	\eea
Note that $\B^{m,n}$ is totally real except along a $(3m-2n-2)$-dimensional plane, where it has CR-singularities of order $1$. In \cite{Co10}, Coffman further showed that if $M$ is real-analytic near a nondegenerate CR-singularity, then there is a normalizing transformation that converges, i.e., $M$ is locally biholomorphically equivalent to $\B^{m,n}$ near such a point. We note that the nondegeneracy conditions required for the formal equivalence to hold at a CR-singular point $p\in M$ are full-rank conditions on matrices involving the second-order derivatives of the local graphing functions of $M$ at $p$; see equations $(58)$ and $(62)$ in \cite[\S 6.]{Co10}. These conditions will generically yield a codimension $2$ set in $M$. When $m=2k+1$ and $n=3k$, $k\geq 2$, the set of CR-singularities is of dimension $1$. Thus, by transversality, the CR-singular set of a generic $(2k+1)$-dimensional $M\subset\C^{3k}$ consists only of nondegenerate points. Combining this with the density of real-analytic functions (in any fixed $\cont^\ell$-norm, $0\leq \ell<\infty$), and Lemma~\ref{lem_emb1}, we obtain the following crucial preparatory result.

\begin{lemma}\label{lem_emb2}
Any smooth embedding $\iota : M\hookrightarrow \mathbb C^{3k}$ of a smooth $(2k+1)$-dimensional closed manifold admits a small perturbation (in any fixed $\cont^s$-norm, $1\leq s<\infty$) that gives a smooth embedding $\iota_0:M\hookrightarrow \mathbb C^{3k}$ such that 
	\begin{enumerate}
\item [$(i)$] the set of CR-singularities of $\iota_0(M)$ is a finite union of smooth simple closed real curves, and 
\item [$(ii)$] near any CR-singularity, $\iota_0(M)$ is locally biholomorphic to the Beloshapka--Coffman normal form $\B^{2k+1,3k}$.
\end{enumerate}
\end{lemma}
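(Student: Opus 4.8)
The plan is to combine three ingredients already laid out in the preceding discussion: the genericity of CR-singular curves from Lemma~\ref{lem_emb1}, the codimension count for the degenerate locus within the Beloshapka--Coffman framework, and a density/transversality argument. First I would start from an arbitrary smooth embedding $\iota:M\hookrightarrow\C^{3k}$ and recall that, by Lemma~\ref{lem_emb1}, a generic perturbation already has CR-singular set equal to a finite union of disjoint smooth simple closed curves, with no CR-singularities of order $\geq 2$; this handles $(i)$. The point is that the set of such ``good'' embeddings is open and dense in $C^s(M,\C^{3k})$ (in fact a residual set), by the standard jet-transversality argument applied to the stratification \eqref{e.smu} of the CR-singular locus — the codimension of $S_\mu$ in $M$ for $n=3k$, $m=2k+1$ is $2\mu^2+2\mu(k-1)$, which is $2k$ for $\mu=1$ and strictly larger for $\mu\geq 2$, so for a generic embedding $S_1$ is a $1$-manifold and $S_\mu=\emptyset$ for $\mu\geq2$.

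Next, for $(ii)$, I would invoke Coffman's local normal form \cite{Co10}: near a \emph{nondegenerate} CR-singular point of order $1$, a real-analytic $M$ is locally biholomorphic to $\B^{2k+1,3k}$. Two things must be arranged simultaneously. First, the embedding should be real-analytic near the CR-singular set; this is achieved by approximating $\iota_0$ in $C^s$-norm by a real-analytic embedding (density of real-analytic maps in $C^s(M,\C^{3k})$), which can be done without disturbing property $(i)$ since $(i)$ is an open condition. Second, every point of the (now $1$-dimensional) CR-singular curve must satisfy Coffman's nondegeneracy conditions — the full-rank conditions on the Hessian-type matrices from equations $(58)$ and $(62)$ of \cite{Co10}. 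The locus where nondegeneracy \emph{fails} is cut out by the vanishing of certain minors of these matrices, hence is generically of codimension $2$ in $M$ (as remarked in the text). Since the CR-singular set is only $1$-dimensional, a further arbitrarily small generic perturbation of $\iota_0$ — supported near the CR-singular curves and compatible with keeping the embedding real-analytic there — makes the degenerate locus disjoint from $S_1$ by transversality (two sets of dimensions $2k-1$ and $1$ inside a $(2k+1)$-manifold can generically be separated only if their dimensions sum to less than $2k+1$, which holds here for $k\geq 2$). Thus every CR-singular point is nondegenerate of order $1$, and Coffman's theorem applies pointwise along each curve.

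Finally I would assemble these: choosing the perturbations small enough that all the open conditions ($(i)$, real-analyticity near $S_1$, and nondegeneracy along $S_1$) persist simultaneously, we obtain $\iota_0:M\hookrightarrow\C^{3k}$ with $(i)$ a finite union of smooth simple closed curves of CR-singularities and $(ii)$ local biholomorphic equivalence to $\B^{2k+1,3k}$ near each such point. The argument is essentially a bookkeeping of three successive generic perturbations, each preserving the gains of the previous one because each desired property is open.

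The main obstacle I anticipate is making precise that the nondegeneracy (full-rank) conditions really do carve out a codimension-$2$ set and, more delicately, that one can perturb to achieve transversality \emph{while staying within the class of real-analytic-near-$S_1$ embeddings whose CR-singular set is still a disjoint union of smooth closed curves}. In other words, the three perturbation steps interact: a perturbation improving nondegeneracy could in principle create new CR-singular points or break real-analyticity. The cleanest fix is to perform a single transversality argument in an appropriate jet space that encodes all three conditions at once (the $2$-jet of the graphing functions along $S_1$ suffices, since order-$1$ CR-singularity, the stratum dimension, and Coffman's rank conditions are all determined by first and second derivatives), then apply Thom transversality plus the density of real-analytic sections — this is precisely the kind of combined-genericity statement that the text is implicitly asserting when it writes ``by transversality, the CR-singular set of a generic $(2k+1)$-dimensional $M\subset\C^{3k}$ consists only of nondegenerate points.''
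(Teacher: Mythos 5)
Your proposal follows essentially the same route as the paper, which derives Lemma~\ref{lem_emb2} in the paragraph preceding its statement: Lemma~\ref{lem_emb1} for the generic structure of the CR-singular set, the codimension-two count on Coffman's full-rank conditions plus transversality (a $1$-dimensional singular set versus a generically codimension-$2$ degenerate locus) to exclude degenerate CR-singular points, and density of real-analytic embeddings so that the convergent normalization of \cite{Co10} applies near each nondegenerate point. Your extra care about making the three perturbations compatible (openness of each condition, or a single jet-transversality argument on $2$-jets) is precisely the bookkeeping the paper leaves implicit, so there is no substantive difference in approach.
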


We note that $\B^{2k+1,3k}$ is locally polynomially convex at the origin (see Lemma~\ref{lem_polcvx}), which is a property that is invariant under local biholomorphisms. Thus, since a submanifold in $\Cn$ is always locally polynomially convex at its totally real points, $\iota_0(M)$ in Lemma~\ref{lem_emb2} is locally polynomially convex everywhere. 

\section{Topological Preliminaries}\label{sec_ATprelim}
We collect some algebro-topological results that will play a crucial role in our arguments. Note that the homotopy groups computed in this section are known in the literature, and will be familiar to topologists. However, since our interest lies in finding explicit generators of these groups, we carry out certain computations from scratch.

\subsection{Notation}\label{subsec_notn} We fix some notation for the rest of this paper. Slightly different coordinates are used for points in even and odd-dimensional spheres, as follows. 
\begin{itemize}
\item [$(a)$] For $k\geq  2$, 
	\beas 
		&S^{2k}&=\left\{(z,w,\zt,s)\in\C\times\C\times\C^{k-2}\times\rl:
|z|^2+|w|^2+||\zt||^2+s^2=1\right\},\\
		&S^{2k-1}&=
		\left\{(z,t,\zt,s)\in\C\times\rl\times\C^{k-2}\times\rl:
		|z|^2+t^2+||\zt||^2+s^2=1\right\},\\
		&D^{2k}&=
			\left\{(z,t,\zt,s)\in\C\times\rl\times\C^{k-2}\times\rl:
|z|^2+t^2+||\zt||^2+s^2\leq 1\right\}.
	\eeas

\smallskip
\item [$(b)$] In $(a)$ above, when needed, we write $z=x+iy$, $w=s+it$ and $\zt=(u_1+iv_1,...,u_{k-2}+iv_{k-2})$.

\smallskip
\item [$(c)$] When convenient, we denote $e^{i\theta}\in S^1\subset\C$ by $\theta$, where $\theta\in(-\pi,\pi]$.    
\end{itemize}  

We denote the set of all orthonormal $k$-frames in $\Cn$ by $W_{k,n}$. An element $A=[a_1,...,a_k]\in W_{k,n}$ will be represented as an $n\times k$ matrix with orthonormal columns $a_j=(a_{1j},...,a_{nj})^T\in\Cn$, $j=1,...,k$. When $k=1$, $W_{1,n}=S^{2n-1}$, and we switch between the two conventions, with the understanding that 
\beas
	S^{2n-1}\ni(z,t,\zt,s)\leftrightarrow
		\begin{pmatrix} z\\ t+is\\ \zt^T
		\end{pmatrix}\in W_{1,n}.
\eeas 
Let $i_r:W_{k,n}\mapsto W_{k+r,n+r}$ be the map given by 
\bea\label{eq_stiefjump}
	 A=
	\begin{pmatrix}
	a_{11} & \cdots & a_{1k}\\
	\vdots & \ddots & \vdots \\
	a_{n1} & \cdots  & a_{nk}
	\end{pmatrix}
				\mapsto 
\left(\begin{array}{c | c}	
		A & \mathbf{0}_{n,r}\\
	\hline
  	\mathbf{0}_{r,k} & \mathbf{I}_{r,r}
 \end{array}\right).
\eea
Note that $i_{r_1}\circ i_{r_2}=i_{r_1+r_2}$. We also need to consider $V_{k,n}$, the (noncompact) Stiefel manifold of $\C$-linearly independent $k$-frames in $\C^{n}$. The compact space $W_{k,n}$ is a strong deformation retract of $V_{k,n}$ via Gram-Schmidt orthogonalization. Lastly, $e_j$ denotes the vector $(0,\cdots,\underbrace{1}_{z_j},\cdots,0)\in\Cn$.   

\subsection{Higher homotopy groups of complex Stiefel manifolds}\label{subsec_homStief} To make the exposition as self-contained as possible, we first collect some basic definitions and results. For more details, the reader may consult classical references such as \cite{Ha02} and \cite{Ad74}.  

A continuous map $p:E\rightarrow B$ between topological spaces is said to be a {\em Hurewicz fibration} if it has the homotopy lifting property with respect to all topological spaces, i.e., for any space $Z$, map $g:Z\rightarrow E$ and homotopy $H:Z\times[0,1]\rightarrow B$ such that the following diagram commutes, there is a $G:Z\times [0,1]\rightarrow E$ that extends $g$ and lifts $H$.
\beas
\begin{tikzcd}
  Z\times\{0\} \arrow[r, "g"] \arrow[d, hook] & E \arrow[d, "p"] \\
 Z\times [0,1] \arrow[r, "H"] \arrow [ur, dashed, "G"] & B
\end{tikzcd}
\eeas
In this case, the {\em fibers} or inverse images of points in $B$ are homotopy equivalent (to $F$, say), and the fibration is denoted by 
	\bes
		F\hookrightarrow E \xrightarrow{p} B.
	\ees
Fixing some $b\in B$, $e\in p^{-1}(b)$ and $f=e$, the fibration induces the following long exact sequence of homotopy groups.
	\bes
	\cdots \rightarrow \pi_{\ell+1}(B,b)\xrightarrow{\de_\ell} \pi_\ell(F,f)
	\xrightarrow{\iota^*} \pi_\ell(E,e)\xrightarrow{p^*} \pi_\ell(B,b)
		\xrightarrow{\de_{\ell-1}} \pi_{\ell-1}(F,f)\rightarrow \cdots
	\ees
Here, if $H:(D^{\ell+1},\bdy D^{\ell+1})\rightarrow (B,b)$ represents an element in $\pi_{\ell+1}(B,b)$, then for any $G:D^{\ell+1}\rightarrow E$ such that the diagram 
\bea\label{diag_bdymap}
\begin{tikzcd}
  D^\ell\times\{0\} \arrow[r, "g\: \equiv\: e"] \arrow[d, "\iota"] 
		& E \arrow[d, "p"] \\
 D^\ell\times[0,1]\cong D^{\ell+1} \arrow[r, "H"] \arrow [ur, dashed, "G"] 
		& B
\end{tikzcd}
\eea
commutes, $G|_{\bdy D^{\ell+1}}$ induces a well-defined element in $\pi_\ell(F,f)$. This is because $G(\bdy (D^{\ell}\times[0,1]))\subseteq p^{-1}(b)\cong F$ and $G:D^\ell\times\{0\}\mapsto e\ (=f$ in $F$). The map $\de_\ell$ maps $H$ to this element in $\pi_\ell(F,f)$.
%
%


Next, we recall the Freudenthal suspension theorem which, in particular, implies the stability of the homotopy groups $\pi_{m+1}(S^m)$, $m\geq 3$. Given a topological space $X$, its suspension is the space 
\bes
\Sigma X=\{(x,t)\in X\times[0,1]:(x_1,t)\sim (x_2,t)\ \text{when either}\ t=0\ \text{or}\ t=1 \}.
\ees
Given a map $f:X\rightarrow Y$, its suspension is the map $\Sigma f:\Sigma X\rightarrow\Sigma Y$ given by $(x,s)\mapsto (f(x),s)$. The Freudenthal suspension theorem says that if $X$ is an $n$-connected CW complex, then $f\mapsto\Sigma f$ induces an isomorphism between $\pi_i(X)$ and $\pi_{i+1}(\Sigma X)$ when $i<2n+1$, and an epimorphism between $\pi_{2n+1}(X)$ and $\pi_{2n+2}(\Sigma X)$. Applying this iteratively to spheres, we obtain that
	\beas
		&\pi_{2k}(S^{2k-1})\cong \pi_4(S^3)\cong\Z_2,&\quad k\geq 2,\\
		&\Z\cong \pi_3(S^2)\twoheadrightarrow \pi_{4}(S^3).& 
	\eeas
Furthermore, since the Hopf fibration given by $\mathfrak h:(z,w)\mapsto (2\zbar w,|z|^2-|w|^2)$ is a generator of $\pi_3(S^2)$, we obtain the following generator of $\pi_{2k}(S^{2k-1})$ via suspensions:
	\be\label{eq_gen}
		\mathfrak h_k:
		(z,w,\zt,s)
	\mapsto \left(2\zbar w,|z|^2-|w|^2,\zt\sqrt{1+|z|^2+|w|^2},s\sqrt{1+|z|^2+|w|^2}\right). 
	\ee

In \cite{Gi67b} (results announced in \cite{Gi67}), fibrations and suspensions are used to compute certain higher homotopy groups of complex Stiefel manifolds. In particular, it is shown that
	\be\label{eq_homStfl}
		\pi_{\ell}(W_{k,n})=
		\begin{cases}
		0,&\ \text{if}\ \ell\leq 2(n-k),\\
		\Z,&\ \text{if}\ \ell=2(n-k)+1,\\
		0,&\ \text{if}\ \ell=2(n-k)+2,\ n-k\ \text{is odd},\\
		\Z_2,&\ \text{if}\ \ell=2(n-k)+2,\ n-k\ \text{is even}.		
		\end{cases}
	\ee
We retrace this technique to compute explicit generators of $\pi_{2k-1}(W_{2k+1,3k})$ and $\pi_{2k}(W_{2k+1,3k})$, $k\geq 2$. 

\begin{lemma}\label{lem_gentors} For any $k\geq 2$, the map 
	\beas
	\alpha:S^{2k-1}\ni(z,t,\zt,s)\mapsto i_{2k}(z,t,\zt,s)
		=\left(\begin{array}{c | c}
		\begin{array}{c}
			z\\ t+is\\ \zeta^T
		\end{array}
		& \mathbf{0}_{k,2k} \\
		\hline
		  		\mathbf{0}_{2k,1} & \mathbf{I}_{2k,2k}
 \end{array}\right)
	\eeas
represents a generator of $\pi_{2k-1}(W_{2k+1,3k})\cong \Z$. 

When $k>2$ is odd, the map 
	\beas
	\beta:S^{2k}\ni(z,w,\zt,s)\mapsto 
\left(\begin{array}{c | c}
		\begin{array}{c}
			2\zbar w\\ 1-2|w|^2+i2|w|s\\ 
			2|w|\zeta^T
		\end{array}
		& \mathbf{0}_{k,2k} \\
		\hline		
		  \mathbf{0}_{2k,1} & \mathbf{I}_{2k,2k}
 \end{array}\right)
	\eeas
represents a generator of $\pi_{2k}(W_{2k+1,3k})\cong \Z_2$.
\end{lemma}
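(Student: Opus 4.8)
The strategy is to use the fibration-and-suspension technique à la Gilmore to reduce both statements to the known generators of $\pi_{2k-1}(S^{2k-1})\cong\Z$ and $\pi_{2k}(S^{2k-1})\cong\Z_2$ that we already identified in the preliminaries. First, I would set up the standard fiber bundle $W_{2k,3k-1}\hookrightarrow W_{2k+1,3k}\xrightarrow{q} S^{2k-1}$, where $q$ sends a frame to its first column (this is the bottom of the column-deletion tower). Since $\pi_\ell(W_{2k,3k-1})=0$ for $\ell\leq 2((3k-1)-2k)=2k-2$ by \eqref{eq_homStfl}, the long exact sequence of this fibration shows that $q^*:\pi_{2k-1}(W_{2k+1,3k})\to\pi_{2k-1}(S^{2k-1})\cong\Z$ is an isomorphism (injectivity from $\pi_{2k-1}(W_{2k,3k-1})=0$, surjectivity from $\pi_{2k-2}(W_{2k,3k-1})=0$). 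Now the map $\alpha$ is precisely $i_{2k}$ applied to $(z,t,\zt,s)\in S^{2k-1}=W_{1,k}$, so its first column is exactly the identity-degree-one parametrization of $S^{2k-1}$ (the map $q\circ\alpha$ is the identity on $S^{2k-1}$ under the coordinate identification). Hence $q^*[\alpha]$ is a generator of $\pi_{2k-1}(S^{2k-1})$, and therefore $[\alpha]$ generates $\pi_{2k-1}(W_{2k+1,3k})\cong\Z$. This first part is essentially bookkeeping once the fibration and the $i_{2k}$ description are in place.

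For the torsion generator, I would exploit the same fibration but one dimension up in the homotopy degree, together with the computation that the image of $\mathfrak h_k$ sits in the right place. The long exact sequence reads
\bes
\pi_{2k}(W_{2k,3k-1})\xrightarrow{q^*}\pi_{2k}(W_{2k+1,3k})\xrightarrow{q^*}\pi_{2k}(S^{2k-1})\xrightarrow{\de}\pi_{2k-1}(W_{2k,3k-1}).
\ees
We have $\pi_{2k-1}(W_{2k,3k-1})\cong\Z$ by \eqref{eq_homStfl} (here $n-k=k-1$), so $\de$ lands in a torsion-free group while its domain $\pi_{2k}(S^{2k-1})\cong\Z_2$ is torsion; thus $\de=0$ and $q^*:\pi_{2k}(W_{2k+1,3k})\to\pi_{2k}(S^{2k-1})$ is surjective. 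On the other side, $\pi_{2k}(W_{2k,3k-1})\cong\Z_2$ when $k$ is odd (again $n-k=k-1$ even, using \eqref{eq_homStfl}), so a priori $\pi_{2k}(W_{2k+1,3k})$ could be an extension; but \eqref{eq_homStfl} with $n-k=k$ odd tells us $\pi_{2k}(W_{2k+1,3k})\cong\Z_2$ exactly, forcing $q^*:\pi_{2k}(W_{2k+1,3k})\to\pi_{2k}(S^{2k-1})$ to be an \emph{isomorphism}. So it suffices to check that the first column of $\beta$, as a map $S^{2k}\to S^{2k-1}=W_{1,k}$, represents the generator of $\pi_{2k}(S^{2k-1})$. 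The first column of $\beta(z,w,\zt,s)$ is $(2\zbar w,\,1-2|w|^2+i2|w|s,\,2|w|\zt^T)^T$, which under the coordinate dictionary $S^{2k-1}\ni(z',t',\zt',s')\leftrightarrow(z',t'+is',\zt'^T)^T$ corresponds to the point $\bigl(2\zbar w,\,1-2|w|^2,\,2|w|\zt,\,2|w|s\bigr)$ on $S^{2k-1}$; one checks this is norm one using $|z|^2+|w|^2+\|\zt\|^2+s^2=1$. I would then identify this with the suspension-generator $\mathfrak h_k$ from \eqref{eq_gen} up to homotopy: writing $|z|^2=1-|w|^2-\|\zt\|^2-s^2$ and rescaling, the components match $(2\zbar w,\,|z|^2-|w|^2+ (\text{correction}),\,\zt\cdot(\ldots),\,s\cdot(\ldots))$ after the homotopy that drags $|z|^2$ to $1+|z|^2+|w|^2$ along the positive reals, so the first column of $\beta$ is homotopic to $\mathfrak h_k$, which is a generator of $\pi_{2k}(S^{2k-1})$ by construction.

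\textbf{Main obstacle.} The delicate point is not the exact-sequence accounting — that is forced by \eqref{eq_homStfl} — but verifying cleanly that the first column of $\beta$ represents the \emph{nonzero} class in $\pi_{2k}(S^{2k-1})$, i.e., that it is genuinely homotopic to the iterated suspension $\mathfrak h_k$ of the Hopf map rather than to the constant. This requires writing down an explicit homotopy between the formula for the first column of $\beta$ and \eqref{eq_gen}, tracking that it stays on the sphere and is well-defined at the poles $|w|=0$ (where both degenerate to a single point $e_2$), and being careful that the coordinate identification $S^{2k-1}\leftrightarrow W_{1,k}$ is respected throughout. A secondary subtlety is confirming that the constraint $k>2$ odd is exactly what makes $\pi_{2k}(W_{2k+1,3k})\cong\Z_2$ (so that surjectivity of $q^*$ upgrades to an isomorphism); for $k$ even the target group is $0$ and the statement would be vacuous, which is why $\beta$ is only defined in the odd case. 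I would also double-check the bundle projection indices so that the connectivity bounds $\pi_{\leq 2k-2}(W_{2k,3k-1})=0$ are applied with the correct $n-k$.
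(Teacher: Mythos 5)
Your argument is built on a fibration that does not exist. A column of a frame in $W_{2k+1,3k}$ is a unit vector of $\C^{3k}$, so the column-deletion bundle you invoke is $W_{2k,3k-1}\hookrightarrow W_{2k+1,3k}\xrightarrow{q} S^{6k-1}$, not a bundle over $S^{2k-1}$ (a dimension count gives the same conclusion: the base must have dimension $6k-1$). Consequently $q\circ\alpha$ is the standard inclusion $S^{2k-1}\subset S^{6k-1}$, which is nullhomotopic and detects nothing; the asserted isomorphism $q^*:\pi_{2k-1}(W_{2k+1,3k})\to\pi_{2k-1}(S^{2k-1})$, and the exact sequence you write for the torsion part, have no underlying bundle. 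The correct way to compare $\pi_*(W_{2k+1,3k})$ with $\pi_*(S^{2k-1})$ goes in the opposite direction, through the \emph{fiber inclusion} $S^{2k-1}\cong W_{1,k}\xrightarrow{i_{2k}}W_{2k+1,3k}$: this is what the paper does by iterating the fibrations $W_{k-1,n-1}\xrightarrow{i_1}W_{k,n}\xrightarrow{p}S^{2n-1}$, showing $i_{2k}^*$ is an isomorphism on $\pi_{2k-1}$ for all $k$ and on $\pi_{2k}$ when $k$ is odd. If you want a shortcut close to your intent, use the column-forgetting fibration $S^{2k-1}\hookrightarrow W_{2k+1,3k}\to W_{2k,3k}$, whose fiber inclusion is exactly $\alpha$; since $\pi_\ell(W_{2k,3k})=0$ for $\ell\le 2k$, this gives the $\pi_{2k-1}$ statement and surjectivity of $\alpha_*$ on $\pi_{2k}$.

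The second, more serious issue is injectivity on $\pi_{2k}$, i.e., why $\beta=i_{2k}\circ\mathfrak h_k$ is \emph{nonzero}. Your ``torsion maps into a torsion-free group'' step applies to a map that isn't in the (correct) sequence: the boundary homomorphism actually relevant here is $\pi_{2k+1}(W_{2k,3k})\cong\Z\to\pi_{2k}(S^{2k-1})\cong\Z_2$ (in the paper's tower, $\pi_{2k+1}(S^{2k+1})\to\pi_{2k}(S^{2k-1})$), a map from a torsion-free group to a torsion group, which has no abstract reason to vanish --- and indeed it does not vanish when $k$ is even, which is precisely why the parity hypothesis matters. Either you quote \eqref{eq_homStfl} for $\pi_{2k}(W_{2k+1,3k})\cong\Z_2$ ($k$ odd) and observe that a surjection of $\Z_2$ onto $\Z_2$ is an isomorphism, or you must prove the vanishing of that boundary map directly; the paper does the latter by exhibiting an explicit lift of any $H:D^{2k+1}\to S^{2k+1}$ to $W_{2,k+1}$ via the complex pairing of coordinates, which is exactly where $k$ odd (i.e., $k+1$ even) enters. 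Finally, the homotopy between the $S^{2k-1}$-component of $\beta$ and $\mathfrak h_k$, which you only sketch (``drag $|z|^2$ to $1+|z|^2+|w|^2$''), does need to be written out with care at $|w|=0,1$; the paper records such a homotopy explicitly, and without it the identification of $\beta$ with the suspended Hopf generator is incomplete.
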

\begin{proof} To prove the lemma, we will show that the generators of $\pi_\ell(W_{2k+1,3k})$ essentially descend to generators of $\pi_\ell(S^{2k-1})$, $\ell=2k-1,2k$.
 
We break the proof into three steps ((A), (B) and (C) below). First, for $1\leq k\leq n$, we consider the fibration 
	\be\label{eq_fibration}
		W_{k-1,n-1}\xrightarrow{i_1} W_{k,n}\xrightarrow{p} S^{2n-1},
	\ee
where $p:[v_1,...,v_k]\mapsto v_k$. Ignoring the basepoints, we obtain the following long exact sequence
	\bes
	\cdots \rightarrow \pi_{\ell+1}(S^{2n-1})\xrightarrow{\de_\ell} \pi_\ell(W_{k-1,n-1})\xrightarrow{i_1^*}
	\pi_\ell(W_{k,n})\xrightarrow{p^*} \pi_\ell(S^{2n-1})\rightarrow \cdots
	\ees
which, applied iteratively, proves that for $\ell<2n-2m$, $\pi_\ell(W_{k-m,n-m})\cong\pi_\ell(W_{k,n})$ via $i_m^*$. In particular,
	\begin{center}
(A)\hspace{20pt} for $k\geq 1$ , $\pi_{2k}(W_{2k+1,3k})\cong\pi_{2k}(W_{2,k+1})$ via $i_{2k-1}^*$.
	\end{center}
and, $\pi_{2k-1}(S^{2k-1})\cong\pi_{2k-1}(W_{2k+1,3k})$ via the isomorphism $i_{2k}^*$. Note that the latter fact proves the first half of our claim. 

Next, we consider a particular case of \eqref{eq_fibration}: 
	$S^{2k-1}\cong W_{1,k}\xrightarrow{i_1} W_{2,k+1}\xrightarrow{p}
	S^{2k+1}$, to obtain the long exact sequence 
	 \be\label{eq_les}
	\cdots \rightarrow \pi_{2k+1}(S^{2k+1})\xrightarrow{\de_{2k}}
	\pi_{2k}(S^{2k-1})\xrightarrow{i_1^*} \pi_{2k}(W_{2,k+1})
	\xrightarrow{p^*} \underbrace{\pi_{2k}(S^{2k+1})}_{=0}\rightarrow\cdots.
	\ee
In order to understand the map $\de_{2k}$, we fix $b=e_{k+1}\in \C^{k+1}$ and $e=[e_k,e_{k+1}]\in W_{2,k+1}$, and consider the following specific case of diagram \eqref{diag_bdymap} 
\beas
\begin{tikzcd}
  D^{2k}\cong D^{2k}\times \{0\} \arrow[r, "g\: \equiv\: e"] \arrow[d, "\iota"] 
	& W_{2,k+1} \arrow[d, "p"] \\
 D^{2k+1}\cong D^{2k}\times [0,1] \arrow[r, "H"] \arrow [ur, dashed, "G"] 	
	& S^{2k+1}\subset \C^{k+1}
\end{tikzcd}
\eeas
for any $H=(h_1,...,h_{k+1}):(D^{2k+1},\bdy D^{2k+1})\rightarrow (S^{2k+1},b)$. When $k+1$ is even, the above diagram commutes if $G:(D^{2k+1},\bdy D^{2k+1})\rightarrow (W_{2,k+1},e)$ is set as
	\bes
	(D^{2k+1})\ni Z	\mapsto 
	\begin{pmatrix}
	h_2(Z) & h_1(Z) \\
	-h_1(Z) & h_2(Z) \\
	\vdots & \vdots\\
	h_{k+1}(Z) & h_{k}(Z)\\
	-h_k(Z) & h_{k+1}(Z)
	\end{pmatrix}.
	\ees
In particular, $G|_{\bdy D^{2k+1}}\equiv [e_k,e_{k+1}]\subset p^{-1}(e_{k+1})\cong S^{2k-1}$, which induces the trivial element in $\pi_{2k}(S^{2k-1})$. Thus,
\begin{center}
	(B)\hspace{20pt} when $k$ is odd, $\de_{2k}=0$ in \eqref{eq_les} and, hence, 
		$\pi_{2k}(S^{2k-1})\cong \pi_{2k}(W_{2k+1,3k})$ via $i_1^*$.
\end{center} 
Combining $(A)$ and $(B)$, we have that $\pi_{2k}(W_{2k+1,3k})\cong \pi_{2k}(S^{2k-1})\cong\Z_2$ via $i^*_{2k}$. Moreover, owing to \eqref{eq_gen}, $\psi=i_{2k}\circ \mathfrak h_k$ is a generator of $\pi_{2k}(W_{2k+1,3k})$. To complete our proof, we show that 
	\begin{center}
	(C)\hspace{20pt} 		$\psi$ and $\beta$ are homotopic as maps from $S^{2k}$ into $W_{2k+1,3k}$. 
	\end{center}
Note that we may write $\beta=i_{2k}\circ h$, where $h:S^{2k}\rightarrow S^{2k-1}$ is given by 
	\bes
	h:(z,w,\zt,s)
		\mapsto 
			(2\zbar w,1-2|w|^2,2|w|\zeta,2|w|s).
	\ees
Then, the map $H:S^{2k}\times[0,1]\rightarrow W_{2k+1,3k}$ which sends $(z,w,\zt,s,\tau)$ to
	\beas
		i_{2k}\left(2\zbar w,|z|^2-|w|^2+\tau||\zt||^2+\tau s^2,
(\zt,s)\sqrt{2(1-\tau)|z|^2+2(1+\tau)|w|^2+(1-\tau^2)(||\zt||^2+s^2)}\right)
	\eeas
is a homotopy between $\psi$ and $\beta$. This completes the proof of our lemma. 
\end{proof}

\begin{remark}\label{rmk_deg} Viewing $S^{2k-1}$ as a sphere in $\C^k$ with complex orientation and coordinates $(z,t+is,\zt)$, we fix the convention that $[\alpha]=1$ in $\Z$. Now, for any continuous map $f:S^{2k-1}\mapsto \pi_{2k-1}(W_{2k+1,3k})\cong\Z$, the homotopy class of $f$ is a well-defined integer, which we denote by $\deg(f)$ and call the {\em degree of $f$}. The proof of Lemma~\ref{lem_gentors} shows that if $g$ is a continuous self-map of $S^{2k-1}$, then $\deg(f\circ g)=\deg(f)\deg_{S^{2k-1}}(g)$, where $\deg_{S^{2k-1}}(\cdot)$ is to be understood as the degree of a continuous self-map of $S^{2k-1}$.  
 \end{remark}

\subsection{Maps from \texorpdfstring{$\bf{S^1\times S^{2k-1}}$ }\ into certain Stiefel manifolds.}\label{subsec_homStief2} In this section, we denote by $[X,Y]$ the set of homotopy classes of continuous maps from $X$ to $Y$, where $X$ and $Y$ are topological spaces.  As a consequence of the Freudenthal suspension theorem, it is known that the set $[X,Y]$ is canonically an abelian group if $Y$ is $n$ connected and $X$ is a CW complex of dimension at most $2n$; see \cite[Cor. 3.2.3]{Ko96}. This is indeed the case when $X=S^1\times S^{2k-1}$ and $Y=W_{2k+1,3k}$, $k\geq 2$. Thus, $[S^1\times S^{2k-1},W_{2k+1,3k}]$ is an abelian group. In fact, we can be more precise.   

\begin{lemma}\label{lem_homclass} Let $k\geq 2$. Then, 
	\beas
		&[S^1\times S^{2k-1},W_{2k+1,3k}]\cong \pi_{2k-1}(W_{2k+1,3k})\oplus
		\pi_{2k}(W_{2k+1,3k})\cong
		\begin{cases}
		\Z,\ \text{when $k$ is even};\\
	  \Z\oplus\Z_2,\ \text{when $k$ is odd}.
		\end{cases}
	\eeas 
In the first case, the isomorphism is induced by the map $f\mapsto  f_{\sli}:=f|_{\{\cdot\}\times S^{2k-1}}$. In the second case, $f\mapsto f_{\sli}$ determines the projection onto the first factor. 
\end{lemma}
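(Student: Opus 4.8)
The plan is to analyze maps $f:S^1\times S^{2k-1}\to W_{2k+1,3k}$ via the standard CW structure on the product, where $S^1\times S^{2k-1}$ has cells in dimensions $0$, $1$, $2k-1$, and $2k$ (the top cell being attached via a map built from the commutator/Whitehead-type formula, but since $S^1$ contributes only a $1$-cell, the attaching map of the $2k$-cell is null-homotopic in $W_{2k+1,3k}$ for connectivity reasons). Since $W_{2k+1,3k}$ is $2(n-k)=2k-2$-connected by \eqref{eq_homStfl}, any $f$ is (up to homotopy) constant on the $1$-skeleton $S^1\times\{pt\}\cup\{pt\}\times S^{2k-1}$ restricted to the $S^1$-factor — more precisely, $f$ restricted to the $1$-skeleton is null-homotopic because $\pi_1(W_{2k+1,3k})=0$. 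So I would first homotope $f$ to be constant on $S^1\vee S^{2k-1}$ (the wedge sitting inside the product as the $2k-1$-skeleton — here one uses that $\pi_{2k-1}$ is the first possibly-nonzero group, so the restriction to the $S^{2k-1}$ factor need not be trivial; rather, one arranges $f$ to be constant on the $S^1$ factor and unchanged otherwise).

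Concretely, I would use the cofibration $S^1\vee S^{2k-1}\hookrightarrow S^1\times S^{2k-1}\to S^{2k}$ (collapsing the wedge gives the smash $S^1\wedge S^{2k-1}=S^{2k}$). This yields an exact sequence of pointed homotopy sets, and since all the spaces and target are nice enough (the target is $(2k-2)$-connected, the domain is a $2k$-complex, so everything is in the stable range by \cite[Cor. 3.2.3]{Ko96} as already invoked), this becomes a short exact sequence of abelian groups
\[
0\to [S^{2k},W_{2k+1,3k}]\to [S^1\times S^{2k-1},W_{2k+1,3k}]\to [S^1\vee S^{2k-1},W_{2k+1,3k}]\to 0.
\]
Now $[S^1\vee S^{2k-1},W_{2k+1,3k}]\cong \pi_1(W_{2k+1,3k})\oplus\pi_{2k-1}(W_{2k+1,3k})\cong \pi_{2k-1}(W_{2k+1,3k})$ since $\pi_1=0$, and $[S^{2k},W_{2k+1,3k}]\cong\pi_{2k}(W_{2k+1,3k})$. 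The restriction map to the wedge, postcomposed with projection to the $S^{2k-1}$ summand, is exactly $f\mapsto [f_{\sli}]$, which proves the displayed formula for $[S^1\vee S^{2k-1},\cdot]$ and identifies the quotient map. The sequence splits because one can define a section: given a class in $\pi_{2k-1}(W_{2k+1,3k})$ represented by $g:S^{2k-1}\to W_{2k+1,3k}$, the map $(\theta,x)\mapsto g(x)$ gives an element of $[S^1\times S^{2k-1},\cdot]$ restricting to $(0,[g])$ on the wedge; hence $[S^1\times S^{2k-1},W_{2k+1,3k}]\cong \pi_{2k-1}\oplus\pi_{2k}$.

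Plugging in \eqref{eq_homStfl} with $n-k=k$: when $k$ is even, $\pi_{2k}(W_{2k+1,3k})=0$, so the group is $\pi_{2k-1}\cong\Z$ and the isomorphism is precisely $f\mapsto f_{\sli}$; when $k$ is odd, $\pi_{2k}(W_{2k+1,3k})\cong\Z_2$, so the group is $\Z\oplus\Z_2$ with $f\mapsto f_{\sli}$ giving the projection onto the $\Z$ factor. This matches the statement.

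\textbf{Main obstacle.} The delicate point is the exactness and splitting of the Puppe-type sequence at the level of \emph{sets} of homotopy classes, and in particular verifying that the image of $[S^{2k},\cdot]=\pi_{2k}(W_{2k+1,3k})$ inside $[S^1\times S^{2k-1},\cdot]$ is injective (not just that the sequence of pointed sets is exact) — this is where one genuinely needs to be in the stable range so that $[S^1\times S^{2k-1},W_{2k+1,3k}]$ is a group and the coexact sequence of a cofibration becomes an exact sequence of abelian groups, with the map $S^{2k}\to \Sigma(S^1\vee S^{2k-1})$ from the Puppe sequence acting trivially. I expect this to require a careful citation of the stable cohomotopy/co-exact-sequence machinery (e.g. that $[-,Y]$ sends cofiber sequences to exact sequences of groups when $Y$ is sufficiently connected relative to the dimensions involved), together with an explicit construction of the splitting section as above to avoid relying on abstract splitting principles. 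A secondary point to check is that the attaching map of the top cell of $S^1\times S^{2k-1}$ (a Whitehead product $[\iota_1,\iota_{2k-1}]$) indeed maps to zero in $\pi_{2k-1}(W_{2k+1,3k})$ under any $f$ on the skeleton, which is automatic since $\pi_1(W_{2k+1,3k})=0$ forces the $S^1$-component of any skeletal map to be null, killing the Whitehead product.
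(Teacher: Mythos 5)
Your proposal is correct and follows essentially the same route as the paper: the Puppe/cofiber sequence of $S^1\vee S^{2k-1}\hookrightarrow S^1\times S^{2k-1}\to S^{2k}$, injectivity on the left because the suspended attaching map (a Whitehead product) is nullhomotopic, vanishing of $\mu_*$ since $\pi_1(W_{2k+1,3k})=0$, and then the computation \eqref{eq_homStfl}; your explicit section $g\mapsto g\circ\mathrm{pr}_{S^{2k-1}}$ is a small addition the paper gets for free from the freeness of $\Z$.
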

\begin{proof}In order to describe the abelian group $[S^1\times S^{2k-1},W_{2k+1,3k}]$, we consider the following cofiber sequence (see \cite[Chapter 3]{Ar11}):
	\be\label{eq_cofiber}
		S^{2k-1}\xrightarrow{\mu}S^1\vee S^{2k-1}\xrightarrow{\iota} S^1\times S^{2k-1}
		\xrightarrow{\gamma}
		S^{2k}\xrightarrow{\Sigma\mu} S^{2}\vee S^{2k}\xrightarrow{\Sigma\iota}
		\cdots,
	\ee
where 
	\begin{itemize}
		\item $\mu$ is the attaching map of the top cell of $S^1\times S^{2k-1}$,
		\item $\iota$ is the cellular inclusion, and
		\item $\gamma$ is the quotient map $S^1\times S^{2k-1}\rightarrow S^1\times S^{2k-1}/S^1\vee S^{2k-1}\cong S^{2k}$. 
	\end{itemize}
For any CW complex $X$, \eqref{eq_cofiber} induces the following long exact sequence of pointed sets (on fixing basepoints).
	\bes
		1\rightarrow [S^{2k},X]\xrightarrow{\gamma_*} [S^1\times S^{2k-1},X]
		\xrightarrow{i_*} [S^1\vee S^{2k-1},X]\xrightarrow{\mu_*} [S^{2k-1},X].
	\ees
Here, we have used the fact that the suspension of an attaching map is nullhomotopic. Further, $\mu_*:\pi_1(X)\times\pi_{2k-1}(X)\rightarrow\pi_{2k-1}(X)$ is the Whitehead product, which vanishes if $\pi_1(X)=0$. Thus, for $X=W_{2k+1,3k}$, $k\geq 2$, we get the following exact sequence of abelian groups:
	\bes
		0\rightarrow \pi_{2k}(W_{2k+1,3k})\rightarrow [S^1\times S^{2k-1},W_{2k+1,3k}]
		\rightarrow \pi_{2k-1}(W_{2k+1,3k})\rightarrow 0.
	\ees
Based on \eqref{eq_homStfl}, we have the result.  
\end{proof}

Next, we describe the generators of these groups. In particular, when $k$ is odd, we obtain two nonhomotopic maps from $S^1\times S^{2k-1}$ into $W_{2k+1,3k}$ that restrict to homotopic maps on each slice $\{\cdot\}\times S^{2k-1}$. 

\begin{lemma}\label{lem_tori} Let $k\geq 2$. Consider the following maps from $S^1\times S^{2k-1}$ into $W_{2k+1,3k}$. 
\beas
	&f_1:(\theta,z,t,\zt,s)\:\mapsto&\!i_{2k}(z,t,\zt,s), \\
	&f_2:(\theta,z,t,\zt,s)\:\mapsto&\! i_{2k}(ze^{i\theta},u,\zt,s).
\eeas
Then, $[(f_1)_{\sli}]=[(f_2)_{\sli}]=+1$ which, for even $k$, implies that $[f_1]=[f_2]$. However, $[f_1]\neq[f_2]$, when $k$ is odd. 
\end{lemma}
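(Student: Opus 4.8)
The statement has two assertions: first, that both $f_1$ and $f_2$ restrict on each slice $\{\cdot\}\times S^{2k-1}$ to a map of degree $+1$ in $\pi_{2k-1}(W_{2k+1,3k})\cong\Z$; and second, that $[f_1]=[f_2]$ when $k$ is even but $[f_1]\ne[f_2]$ when $k$ is odd. The slice computation is immediate: fixing $\theta$, both $(f_1)_{\sli}$ and $(f_2)_{\sli}$ are of the form $i_{2k}\circ g$ where $g$ is a rotation-composed version of the inclusion $S^{2k-1}\hookrightarrow W_{1,k}$ appearing in Lemma~\ref{lem_gentors}. For $f_1$ the slice is exactly $\alpha$, so $[(f_1)_{\sli}]=+1$ by Lemma~\ref{lem_gentors}. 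For $f_2$ the slice is $\alpha$ precomposed with the self-map $(z,t,\zt,s)\mapsto(ze^{i\theta},t,\zt,s)$ of $S^{2k-1}$ (note the typo: $u$ should read $t$), which is homotopic to the identity (just deform $\theta$ to $0$), hence has $S^{2k-1}$-degree $1$; by Remark~\ref{rmk_deg}, $[(f_2)_{\sli}]=\deg(\alpha)\cdot 1=+1$. By Lemma~\ref{lem_homclass}, when $k$ is even the map $f\mapsto f_{\sli}$ is an isomorphism onto $\pi_{2k-1}(W_{2k+1,3k})$, so equality of slices forces $[f_1]=[f_2]$.

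\textbf{The odd case.} When $k$ is odd, $[S^1\times S^{2k-1},W_{2k+1,3k}]\cong\Z\oplus\Z_2$, and since $[f_1]$ and $[f_2]$ agree in the $\Z$-summand (the slice), to distinguish them I must compute their $\Z_2$-components, i.e.\ their images under $\gamma_*^{-1}$ where $\gamma\colon S^1\times S^{2k-1}\to S^{2k}$ is the collapse map from the cofiber sequence \eqref{eq_cofiber}. Concretely: $[f_1]-[f_2]$ lies in the image of $\gamma_*\colon\pi_{2k}(W_{2k+1,3k})\to[S^1\times S^{2k-1},W_{2k+1,3k}]$, and I must show this difference is the \emph{nonzero} element, i.e.\ that $f_1$ and $f_2$ are not homotopic. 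The plan is to produce an explicit homotopy $f_1\simeq f_2$ \emph{relative to the $1$-skeleton} $S^1\vee S^{2k-1}$, up to a correction term: since $f_1$ and $f_2$ agree on $S^1\times\{*\}$ (both are constant there after a harmless rotation normalization) and are homotopic on $\{*\}\times S^{2k-1}$, they differ by an element of $\pi_{2k}(W_{2k+1,3k})$ obtained by the standard difference construction on the top cell $D^1\times D^{2k-1}$. I would identify this difference element by an explicit formula. The natural guess, given the structure of $f_2$, is that the difference is precisely the generator $\psi=i_{2k}\circ\mathfrak h_k$ of $\pi_{2k}(W_{2k+1,3k})\cong\Z_2$ from Lemma~\ref{lem_gentors}: the rotation $z\mapsto ze^{i\theta}$ as $\theta$ runs over $S^1$ is exactly the kind of "clutching by a loop in $U(1)$" that produces a Hopf-type class on the suspension.

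\textbf{Carrying out the difference computation.} To make this precise I would argue as follows. First homotope $f_1$ and $f_2$ so they literally coincide on $S^1\vee S^{2k-1}$: on $\{*\}\times S^{2k-1}$ use the homotopy from the slice computation; on $S^1\times\{\text{basepoint of }S^{2k-1}\}$ both are already constant if the basepoint is chosen with $z=0$. With $f_1,f_2$ agreeing on the $(2k-1)$-skeleton, the pair $(f_1,f_2)$ defines a map $D^{2k}\cong (S^1\times S^{2k-1})/(S^1\vee S^{2k-1})\to W_{2k+1,3k}$ by using $f_1$ on the "upper" copy and $f_2$ on the "lower" copy of the top cell — equivalently, the difference class $d(f_1,f_2)\in\pi_{2k}(W_{2k+1,3k})$, and $[f_1]=[f_2]$ iff $d(f_1,f_2)=0$. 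I would then write out this difference map explicitly in coordinates: it is built from the family of rotations $e^{i\theta}$ acting on the $z$-coordinate, glued along $\theta=0,\pi$, which presents $S^{2k}$ as the unreduced suspension of $S^{2k-1}$ with the clutching data of the unit complex numbers. Tracking this through the identifications — in particular recognizing $(z,w)\mapsto 2\bar z w$ as the Hopf-type ingredient in $\mathfrak h_k$ — should show $d(f_1,f_2)=\psi\ne 0$ in $\Z_2$. The main obstacle I anticipate is precisely this last identification: one must be careful that the clutching construction produces the \emph{stable} Hopf element $\mathfrak h_k$ (nonzero in $\Z_2$) rather than something that dies after the inclusion $i_{2k}$ or after suspension; the cleanest route may be to compare with the known formula \eqref{eq_gen} for $\mathfrak h_k$ directly via an explicit homotopy, much as step (C) in the proof of Lemma~\ref{lem_gentors} was carried out, rather than invoking abstract clutching theory. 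Once $d(f_1,f_2)=\psi$ is established, the hypothesis $k$ odd makes $\psi$ the nontrivial element of $\Z_2$, giving $[f_1]\ne[f_2]$.
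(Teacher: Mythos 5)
Your slice computation and the reduction to the odd case are fine (and you are right that $u$ should read $t$ in the formula for $f_2$). But the odd case, which is the entire content of the lemma, is not actually proved: you set up the difference-class framework, guess that $d(f_1,f_2)$ equals the generator $\psi=i_{2k}\circ\mathfrak h_k$, and then explicitly defer the verification ("should show", "the main obstacle I anticipate is precisely this last identification"). That identification is not a routine bookkeeping step — a priori the clutching loop $z\mapsto ze^{i\theta}$ could produce a class that dies after the inclusion into $W_{2k+1,3k}$ or after suspension (exactly the worry you raise), and indeed this is what happens for $k$ even; deciding that it survives for $k$ odd is the theorem. In addition, your claim that ``$[f_1]=[f_2]$ iff $d(f_1,f_2)=0$'' needs an argument that a \emph{free} homotopy between $f_1$ and $f_2$ can be corrected to one respecting the subcomplex on which they agree (or that the geometric difference on the top cell computes the group-theoretic difference $\gamma_*^{-1}([f_1]-[f_2])$); you do not address this indeterminacy.

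The paper closes both gaps concretely. It first pins the basepoint issue by passing to the pinched torus $S^1\times_b S^{2k-1}$: both $f_j$ factor through it, and any homotopy between $f_1$ and $f_2$ descends because the $2$-sphere it traces over $S^1\times\{b\}$ is nullhomotopic ($\pi_2(W_{2k+1,3k})=0$). It then precomposes with an explicit map $g:S^{2k}\to S^1\times_b S^{2k-1}$ (a suspension-type parametrization built from $\arg w$ and $2\bar z|w|$), so that $f_2^*\circ g$ is literally the map $\beta$ already proved in Lemma~\ref{lem_gentors} to generate $\pi_{2k}(W_{2k+1,3k})\cong\Z_2$ (that is where the comparison with $\mathfrak h_k$ was carried out, via an explicit homotopy), while $f_1^*\circ g$ is shown nullhomotopic by an explicit homotopy $H$ (with the correction factor $\eta(\tau,r)$). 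Hence $f_1^*\not\simeq f_2^*$ and so $f_1\not\simeq f_2$. To complete your proposal you would have to supply precisely this kind of explicit computation (or an equivalent clutching-to-Hopf identification), so as it stands the proof has a genuine gap at its decisive step.
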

\begin{proof} We note that owing to Lemma~\ref{lem_homclass} and Lemma~\ref{lem_gentors}, we only need to show that $[f_1]\neq [f_2]$ when $k$ is odd. We fix the basepoint $b=(0,1,0,...,0)\in S^{2k-1}$ and let $S^1\times_b S^{2k-1}$ denote the pinched torus $S^1\times S^{2k-1}/S^1\times\{b\}$, where we use the same coordinates as those on $S^1\times S^{2k-1}$, but denote the pinched point by $[\theta,b]$. Since $f_j(\theta,b)=i_{2k}(b)$ for all $\theta\in[0,2\pi]$, $j=1,2$, these maps factor through $S^1\times_b S^{2k-1}$ (via $f_j^*$, say). 
\beas
\begin{tikzcd}
  S^1\times S^{2k-1} \arrow[r, "f_j"] \arrow[d, two heads] 
	& W_{2k+1,3k}  \\
 S^1\times_b S^{2k-1}\arrow [ur, dashed, "f_j^*"] 	
\end{tikzcd}
\eeas
Furthermore, any homotopy $H:S^1\times S^{2k-1}\times[0,1]\rightarrow W_{2k+1,3k}$ between $f_1$ and $f_2$ descends to a homotopy between $f_1^*$ and $f_2^*$ as long as the $2$-sphere given by $H(S^1\times\{b\}\times[0,1])$ in $W_{2k+1,3k}$ is nullhomotopic. This is always the case since $\pi_2(W_{2k+1,3k})=0$. Thus, it suffices to show that $f_1^*$ and $f_2^*$ are nonhomotopic as maps from $S^1\times_b  S^{2k-1}$ into $W_{2k+1,3k}$.  

For this, we consider the maps $g_j:=f_j^*\circ g$, where $g:S^{2k}\rightarrow S^1\times_b S^{2k-1}$ is given by 
	\bes
		(z,w,\zt,s)\mapsto 
		\begin{cases}
			(\arg w, 2\overline z|w|,1-2|w|^2,2|w|\zt,2|w|s),&\ w\neq 0;\\
			[\theta,b],&\  w=0.
		\end{cases}
	\ees
Any homotopy between $f_1^*$ and $f_2^*$ will extend to a homotopy between $g_1$ and $g_2$. Since $g_2=\beta$ is a generator of $\pi_{2k}(W_{2k+1,3k})=\Z_2$ (as shown in Lemma~\ref{lem_gentors}), our proof is complete if we show that $g_1$ is nullhomotopic. For this, it suffices to show that $\wt g:S^{2k}\rightarrow S^{2k-1}$ given by
	\bes
		\wt g:(z,w,\zt,s)\mapsto (2\zbar|w|,1-2|w|^2,2|w|\zt,2|w|s)
	\ees
is nullhomotopic, for $g_2=i_{2k}\circ \wt g$. The homotopy $H:S^{2k}\times[0,1]\rightarrow S^{2k-1}$ given by
\beas
	H:\big((z,w,\zt,s),\tau\big)\mapsto h\big(\eta(\tau,|w|)z,\tau+(1-\tau)|w|,\eta(\tau,|w|)\zt,\eta(\tau,|w|)s\big),
\eeas
where $\eta(\tau,r)=\frac{\sqrt{1-(\tau+(1-\tau)r)^2}}{\sqrt{1-r^2}}$, resolves this matter; see the schematic in Figure~\ref{fig_hom}. The map $H$ is well-defined when $|w|=1$ because 
\bes
\lim_{r\rightarrow 1^-}\eta(\tau,r)=1-\tau\ \text{for all}\ \tau\in[0,1].
\ees
Moreover, $H\big((z,w,\zt,s),0\big)=h(z,|w|,\zt,s)=h(z,w,\zt,s)$ and $H\big((z,w,\zt,s),1\big)\equiv (0,-1,0,0)$. This concludes our proof.   

\begin{figure}[H]
\begin{overpic}[grid=false,tics=10,scale=0.11]{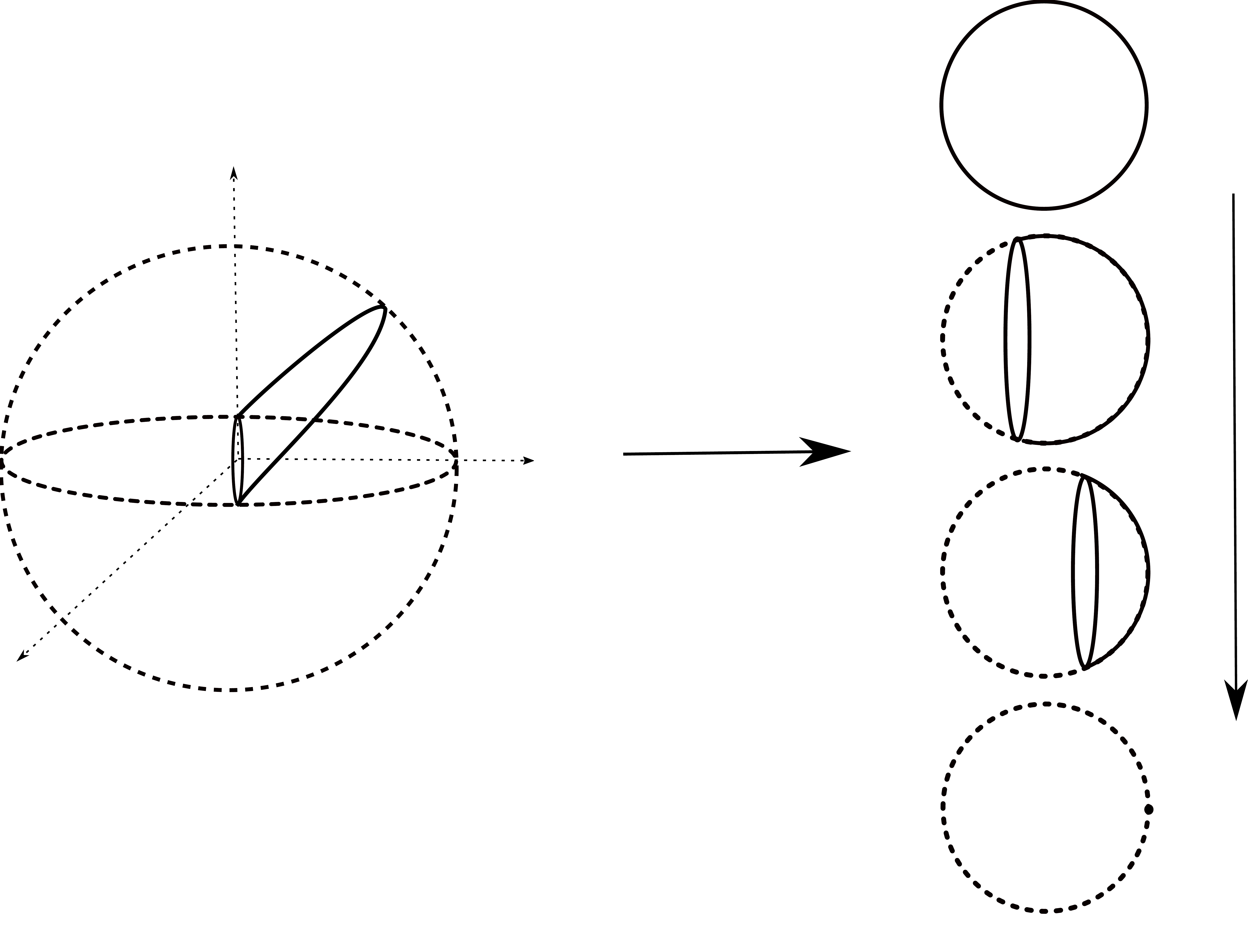} 
\put(67,105){ $\arg w=c$}
\put(50,31){ $S^{2k}$}
\put(158,-9){$S^{2k-1}$}
\put(75,68){$\rea w$}
\put(42,116){$\ima w$}
\put(0,33){$z,\zt,s$}
\put(203.5,91){\rotatebox{-90}{$t\rightarrow 1$}}
\end{overpic}
\medskip
\caption{Evolution of the homotopy $H$ along any slice $\arg w=c$.}\label{fig_hom}
\end{figure}
\end{proof}

\section{Tubular neighborhoods of curves of CR-singularities}\label{sec_tubes}

We now focus on neighborhoods of certain simple closed curves in $(2k+1)$-dimensional submanifolds of $\C^{3k}$. We introduce a specialized notion of cobordism between the boundaries of these neighborhoods, and produce polynomially convex models to represent all the possible classes of this topological equivalence. This section encapsulates the main techincal part of our construction.

\subsection{The index of a parametrized tube enclosing curve}\label{subsec_index}
The precise class of neighborhoods under consideration is as follows.  

\begin{definition}\label{def_tubes}
Suppose $\N\subset\C^{3k}$ is a compact submanifold with boundary that admits a smooth parametrization $F:S^1\times D^{2k}\rightarrow\N$ such that 
	\begin{itemize}
		\item [$a.$] $F$ is totally real on $(S^1\times D^{2k})\setminus(S^1\times\{0\})$,
		\item [$b.$] $\dim_\C T^c_p\N$ is constant over all $p\in F(S^1\times\{0\})$.
		\end{itemize}
Then we call $\N$ a {\em tube enclosing a curve in $\C^{3k}$}, or a {\em tube enclosing a curve}, since the ambient dimension is fixed. We call $F$ an {\em admissible parametrization of $\N$}. We single out two special cases.	
	\begin{enumerate}
\item [$(i)$] If $\N$ is totally real everywhere, we call it a {\em fully-TR tube}.
\item [$(ii)$] If the CR dimension of $\N$ is positive on $\gamma:=F(S^1\times\{0\})$, then $\gamma$ is independent of $F$. In this case, we say that $\N$ is a {\em tube enclosing the CR-singular curve $\gamma$}, or simply a {\em tube enclosing a CR-singular curve}.
\end{enumerate} 
In all the cases above, the word {\em parametrized} is appended to the terminology when discussing the pair $(\N,F)$. 
\end{definition} 

As before, we use $(\theta,z=x+iy,t,\zt,s)$ for points in $S^1\times D^{2k}$. 
Further, we fix the frames 
	\bes
		\vartheta=\partl{}{\theta}\ 
		\ \text{and}\ \
		\sigma=(\vartheta,\sigma')
	\ees
for the tangent bundles $T(S^1\times\{0\})$ and $T(S^1\times D^{2k})$, respecitvely, and 
\bes
	\quad 
		\sigma'= 
	\left(\partl{}{x},\partl{}{y},\partl{}{t},\partl{}{s},\partl{}{u_1},
	\partl{}{v_1},...,\partl{}{u_{k-2}},\partl{}{v_{k-2}}\right)
\ees
for the normal bundle $N(S^1\times\{0\})$ of $S^1\times\{0\}$ in $S^1\times D^{2k}$. Given a diffeomorphism $F:S^1\times D^{2k}\rightarrow\C^{3k}$ that is totally real away from the curve $F(S^1\times\{0\})$, and $p\in S^1$, we define the maps 
		\bea\label{eq_Fhat}
	&\widehat F:S^1\times S^{2k-1}\rightarrow W_{2k+1,3k}&\notag\\
	&\xi\mapsto \left[(F_*\sigma)(F(\xi))\right]^{\operatorname{GS}}
		=[DF(\xi)]^{\operatorname{GS}}, 
	\eea
and 	
	\bea\label{eq_Fhatslice}
	&\widehat F_{p\operatorname{-}\sli}:S^{2k-1}\rightarrow W_{2k+1,3k}&\notag\\
	&\eta\mapsto \left[(F_*\sigma)(F(p,\eta))\right]^{\operatorname{GS}}
		=[DF(p,\eta)]^{\operatorname{GS}},
	\eea
where $\left[(F_*\sigma)(F(\cdot))\right]^{\operatorname{GS}}$ denotes the Gram-Schmidt orthogonalization of the frame $(F_*\sigma)(F(\cdot))$, viewed as a full-rank $(3k)\times(2k+1)$ complex matrix. The map $\widehat F$ will be referred to as the {\em frame map} of the parametrized tube $(\N=F(S^1\times D^{2k}),F)$. 

Note that $\widehat F$ and $\widehat F_{p\operatorname{-}\sli}$ induce elements in $[S^1\times S^{2k-1},W_{2k+1,3k}]$ and $\pi_{2k-1}(W_{2k+1,3k})$, respectively. Since $W_{2k+1,3k}$ is a strong deformation retract of $V_{2k+1,3k}$, we will often abuse notation and drop the notation $[\cdot]^{\operatorname{GS}}$ to view $\widehat F$ and $\widehat F_{p\operatorname{-}\sli}$ as maps into $V_{2k+1,3k}$.

In this subsection, we focus on the homotopy class induced by $\widehat F_{p\operatorname{-}\sli}$. It is clear that this class is independent of $p$. Thus, when discussing the homotopy class of  $\widehat F_{p\operatorname{-}\sli}$, we assume that $p=1$ and denote $\widehat F_{p\operatorname{-}\sli}$ simply by $\widehat F_{\sli}$. Furthermore, in view of Remark~\ref{rmk_deg}, the following definition is well-defined.

\begin{definition}
The {\em index} of a parametrized tube $(\mathcal N,F)$ enclosing a curve is defined as
	\bes
		\operatorname{ind}_{F}(\mathcal N)=
			\deg(\widehat F_{p\operatorname{-}\sli}),\quad \text{for any}\ p\in S^1.
	\ees
\end{definition}

To understand the dependence of this index on $F$, we note that $\sigma'$ assigns an orientation, say $\mathfrak o$, on the normal bundle of $S^1\times\{0\}$ in $S^1\times D^{2k}$. Given a self-diffeomorphism, $\varphi$, of $S^1\times D^{2k}$ that maps $S^1\times \{0\}$ to itself, if $\varphi_*\sigma'$ induces the same orientation $\mathfrak o$ on $N(S^1\times\{0\})$, we say that $\varphi$ is {\em fiberwise orientation preserving}. Otherwise, $\varphi$ is called {\em fiberwise orientation reversing}. We now show that, modulo its sign, $\operatorname{ind}_{F}(\N)$ is independent of $F$. 

\begin{lemma}\label{lem_index}	 Let $F$ and $G$ be admissible parametrizations of $\N$, a tube enclosing a curve in $\C^{3k}$. Then, 
	\be\label{eq_invindex}
		\operatorname{ind}_{G}(\mathcal N)
			=\begin{cases}
				\operatorname{ind}_{F}(\mathcal N),&\ 
	\text{if $F^{-1}\circ G$ is fiberwise orientation-preserving};\\
				-\operatorname{ind}_{F}(\mathcal N),&\ 
	\text{if $F^{-1}\circ G$ is fiberwise orientation-reversing}.
		\end{cases}
	\ee
Moreover, if $U$ is a neighborhood of $F(A\times S^{2k})$, where $A\subset S^1$ is an arc containing $p=1$, and $H:U\rightarrow\C^{3k}$ is a biholomorphism, then 
	\bes
		\operatorname{ind}_{F}(\mathcal N)
			=\deg\left(\widehat{(H\circ F)}_{\sli}\right).
	\ees	
\end{lemma}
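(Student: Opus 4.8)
The plan is to prove the two assertions of Lemma~\ref{lem_index} separately, as they are of somewhat different natures.

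\medskip

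\noindent\textbf{The change-of-parametrization formula.} Set $\varphi = F^{-1}\circ G$, a self-diffeomorphism of $S^1\times D^{2k}$ that carries $S^1\times\{0\}$ to itself (since, under hypothesis~$(b)$ of Definition~\ref{def_tubes}, the central curve is intrinsically characterized as the locus where $DF$, hence $DG$, fails to span a totally real subspace — or, in the CR-singular case, it is literally the CR-singular set, which is parametrization-independent). Fix $p=1$ and restrict attention to the slice $S^{2k-1}$ of unit normal vectors at $p$. First I would observe that $\widehat G_{\sli}$ and $\widehat F_{\sli}\circ\psi$ are homotopic as maps $S^{2k-1}\to W_{2k+1,3k}$, where $\psi\colon S^{2k-1}\to S^{2k-1}$ is the self-map of the normal sphere induced by the linearization $d\varphi$ of $\varphi$ along $S^1\times\{0\}$ at $p$ (composed with radial renormalization). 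Indeed, $DG(p,\eta) = DF(\varphi(p,\eta))\cdot D\varphi(p,\eta)$, and as $\eta$ ranges over the unit sphere while we shrink the normal radius to $0$, $DG$ restricted to the normal directions is homotopic to $DF$ evaluated along the image of $\eta$ under the fiberwise derivative of $\varphi$, which is precisely $\psi$ up to homotopy (the tangential direction $\vartheta$ contributes a fixed first column that plays no role up to homotopy). Then by Remark~\ref{rmk_deg}, $\deg(\widehat G_{\sli}) = \deg(\widehat F_{\sli}\circ\psi) = \deg(\widehat F_{\sli})\cdot\deg_{S^{2k-1}}(\psi)$. Finally, $\deg_{S^{2k-1}}(\psi) = \pm 1$ according to whether the fiberwise linearization of $\varphi$ preserves or reverses the orientation $\mathfrak o$ of the normal bundle — which is exactly the definition of $\varphi$ being fiberwise orientation-preserving or -reversing. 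This yields \eqref{eq_invindex}.

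\medskip

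\noindent\textbf{Biholomorphic invariance.} For the second assertion, the key point is that a biholomorphism $H$ of an open set in $\C^{3k}$ acts on totally real frames in a way that is, fiberwise over the relevant curve, homotopic to the identity on $W_{2k+1,3k}$. Concretely, $\widehat{(H\circ F)}_{\sli}(\eta) = [DH(F(p,\eta))\cdot DF(p,\eta)]^{\operatorname{GS}}$, so over the slice we are composing $\widehat F_{\sli}$ with the (pointwise) action of the $\C$-linear maps $DH(F(p,\cdot))\in\gl(3k,\C)$. Since $A$ is an arc (contractible), the family $q\mapsto DH(F(q))$, $q\in A\times\{0\}$, is homotopic through $\gl(3k,\C)$-valued families to the constant family $DH(F(p))$; and then, because $\gl(3k,\C)$ is path-connected, $DH(F(p))$ is joined by a path in $\gl(3k,\C)$ to the identity. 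Carrying out the corresponding homotopy on frames (with Gram--Schmidt applied throughout, which is legitimate since $\gl(3k,\C)$ preserves full rank), we conclude that $\widehat{(H\circ F)}_{\sli}$ is homotopic to $\widehat F_{\sli}$, hence has the same degree. One must check that this homotopy stays within totally real frames over the slice, but away from the center $S^1\times\{0\}$ the map $F$ is totally real and $\gl(3k,\C)$ preserves total reality, so there is no obstruction.

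\medskip

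\noindent\textbf{Anticipated main obstacle.} I expect the delicate step to be the first one: making precise the claim that, as the normal radius tends to zero, the frame map $\widehat G_{\sli}$ is homotopic to $\widehat F_{\sli}$ precomposed with the fiberwise linearization $\psi$ of $\varphi = F^{-1}\circ G$. This requires a careful bookkeeping of how $D\varphi(p,\eta)$ decomposes into its tangential-to-$\gamma$ part (which is absorbed into the first column of the frame and contributes trivially up to homotopy, using that $\pi_{2k-1}$ of a point is trivial) and its normal-to-$\gamma$ part (whose restriction to the unit sphere, after renormalization, is $\psi$), together with the fact that $DF$ along nearby normal spheres converges in $C^0$ to its restriction along the central curve. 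The sign computation $\deg_{S^{2k-1}}(\psi)=\pm 1$ is then a standard consequence of the fact that a linear isomorphism of $\rl^{2k}$ induces a degree-$(\pm 1)$ self-map of $S^{2k-1}$ according to the sign of its determinant, together with the observation that $\det d\varphi|_{N(S^1\times\{0\})}$ has constant sign along the connected curve $S^1\times\{0\}$, so ``fiberwise orientation-preserving'' is well defined. The biholomorphic invariance, by contrast, is essentially a connectedness argument and should be routine once the homotopy is set up with Gram--Schmidt applied uniformly.
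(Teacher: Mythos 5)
Your treatment of the change-of-parametrization formula is essentially the paper's argument: factor $DG(1,\eta)=DF(\varphi(1,\eta))\cdot D\varphi(1,\eta)$ by the chain rule, argue that the $\operatorname{GL}_\C(2k+1)$-valued right factor can be homotoped away, reduce to a degree-$(\pm 1)$ self-map of the normal sphere, and conclude via the multiplicativity of the degree in Remark~\ref{rmk_deg}. Your variant (shrink the normal radius and replace $\varphi$ by its fiberwise linearization $\psi$) works, but the bookkeeping you flag as delicate can be avoided exactly as in the paper: since $\det D\varphi\neq 0$ on all of $S^1\times D^{2k}$, the map $\eta\mapsto D\varphi(1,\eta)$ on $S^{2k-1}$ extends over the contractible disk $D^{2k}$ into $\operatorname{GL}_\C(2k+1)$ and is therefore homotopic to the constant identity, after which one only has to homotope $\varphi|_{\{1\}\times S^{2k-1}}$, inside $S^1\times(D^{2k}\setminus\{0\})$, to $\mathfrak i$ or to $\eta\mapsto\eta^*$. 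Also drop the appeal to ``$DF$ along nearby normal spheres converges to its restriction along the central curve'': at CR-singular points the limiting frame is not totally real (Gram--Schmidt in $V_{2k+1,3k}$ degenerates there), and the limit is never needed --- only small positive radius, where everything stays in $V_{2k+1,3k}$. (Your intrinsic characterization of the central curve is only valid in the CR-singular case; in the fully-TR case both indices vanish anyway, since the slice map extends over the disk.)

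In the biholomorphic-invariance part there is a genuine gap. What must be shown is that the $\operatorname{GL}_\C(3k)$-valued map \emph{over the normal sphere}, $\eta\mapsto D_\C H(F(1,\eta))$, $\eta\in S^{2k-1}$, is homotopic to the constant identity, so that left multiplication does not change the class of $\widehat F_{\sli}$ in $V_{2k+1,3k}$. Your justification homotopes the wrong family: contractibility of the arc $A$ concerns the family over $A\times\{0\}$, not over $\{1\}\times S^{2k-1}$, and path-connectedness of $\operatorname{GL}_\C(3k)$ only disposes of a \emph{constant} matrix. Nullhomotopy of a sphere of invertible matrices is not automatic: $\pi_{2k-1}(\operatorname{GL}_\C(3k))\cong\pi_{2k-1}(U(3k))\cong\Z\neq 0$, and a non-nullhomotopic $\operatorname{GL}_\C(3k)$-valued family acting on the left could in principle alter the homotopy class of the frame map, so the degree equality does not follow from what you wrote. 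The paper's fix is the same disk-extension device as above: because $U$ contains $F(\{1\}\times D^{2k})$, the map $\eta\mapsto D_\C H(F(1,\eta))$ extends continuously over the contractible disk $D^{2k}$, hence is nullhomotopic, and connectedness of $\operatorname{GL}_\C(3k)$ then lets one take the constant to be $\mathbf I$; with that replacement your argument goes through.
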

\begin{proof} Let $\varphi:=F^{-1}\circ G$. Since $\sigma$ is the coordinate frame on $S^1\times D^{2k}$, we may write
	\beas
		\widehat{G}_{\sli}(\eta)
			=(G_*\sigma)(G(1,\eta))=DG(1,\eta)
		= DF(\varphi(1,\eta))\cdot D\varphi(1,\eta),\qquad \eta\in S^{2k-1}.
	\eeas
Since $\det D\varphi\neq 0$ on $S^1\times D^{2k}$, the map $D\varphi(1,\cdot)|_{S^{2k-1}}$, extends to a map from $D^{2k}$ into $\operatorname{GL}_\C(2k+1)$, and is, thus, homotopic to the constant identity map in $\operatorname{GL}_\C(2k+1)$. As $V_{2k+1,3k}$ is closed under the action of $\operatorname{GL}_\C(2k+1)$ via multiplication from the right, $\widehat G_{\sli}$ is homotopic to the map 
	\bes
		\eta\mapsto DF(\varphi(1,\eta)),\qquad \eta\in S^{2k-1}.
	\ees

Next, letting $\eta^*=(\zbar,t,\zt,s)$, for $\eta=(z,t,\zt,s)\in D^{2k}$, we observe that $\varphi|_{\{1\}\times S^{2k-1}}$ is homotopic to the identity map $\mathfrak i$ on $S^{2k-1}$ when $\varphi$ is fiberwise orientation-preserving, and to $\mathfrak i^*:\eta\mapsto\eta^*$, $\eta\in S^{2k-1}$, when $\varphi$ is fiberwise orientation-reversing. Thus, $\widehat G_{\sli}$ is homotopic in $V_{2k+1,3k}$ to 
	\bes
		\begin{cases}
			DF(\mathfrak i(1,\cdot)),&\ 
	\text{if $F^{-1}\circ G$ is fiberwise orientation-preserving};\\
				DF(\mathfrak i^*(1,\cdot)),&\ 
	\text{if $F^{-1}\circ G$ is fiberwise orientation-reversing}.
		\end{cases}
	\ees
Since $\deg_{S^{2k-1}}(\mathfrak i)=+1$ and $\deg_{S^{2k-1}}(\mathfrak i^*)=-1$,  the first part of our claim now follows from Remark~\ref{rmk_deg}.  

For the second part of our claim, note that
	\bes
		 \widehat{(H\circ F)}_{\sli}(\eta)
		=D_\C H(F(1,\eta))\cdot DF(1,\eta),\quad \eta\in S^{2k-1}.
	\ees
But, $\big((D_\C H)\circ F\big)(1,\cdot)|_{S^{2k-1}}$ extends to a map from $D^{2k}$ into $\operatorname{GL}_\C(3k)$. Thus, it is homotopic to the constant identity map in $\operatorname{GL}_\C(3k)$. Now, since, $V_{2k+1,3k}$ is closed under the action of $\operatorname{GL}_\C(3k)$ via multiplication from the left, we are done. 
\end{proof}

\begin{remark}\label{rem_index} From the lemma above, it is clear that $\operatorname{ind}_{F\circ\varphi}(\mathcal N)=-\operatorname{ind}_{F}(\mathcal N)$, where $\varphi:S^1\times D^{2k}\rightarrow S^1\times D^{2k}$ is given by $(\theta,\eta)\mapsto(g(\theta),\eta^*)$, for some diffeomorphism $g:S^1\rightarrow S^1$. 
\end{remark} 

\subsection{Totally real cylindrical cobordisms between tubes enclosing curves}\label{subsec_trcob} We now discuss a notion of equivalence between parametrized tubes enclosing curves, under which one tube can be essentially replaced by another without any addition of CR-singularities. We first fix some additional notation.
\begin{itemize}
\item [$(a)$] Let $\mathbf{T}=[0,1]\times S^1\times S^{2k-1}$.
\item [$(b)$] For $\de\in(0,1/2)$, let $\mathbf T_{\de}=[0,\de]\times S^1\times S^{2k-1}$ and $\mathbf T_{1-\de}=[1-\de,1]\times S^1\times S^{2k-1}$. 
\item [$(c)$] Let $L_c:[c,c+1)\times S^1\times S^{2k-1}\rightarrow S^1\times (D^{2k}\setminus\{0\})$ be the map $(\tau,\theta,\eta)\mapsto (\theta,(c+1-\tau)\eta)$.
\end{itemize}

\begin{definition}\label{def_trc}
Suppose $(\N_1,F_1)$ and $(\N_2,F_2)$ are parametrized tubes enclosing curves in $\C^{3k}$. If they are disjoint, they are said to be {\em totally real cylindrically cobordant} or {TRC} cobordant  if there exists a totally real embedding $F:\T\rightarrow \C^{3k}$ such that, for some sufficiently small $\de$, 
	\be\label{eq_trtcob}
		F(\tau,\theta,\eta)=\begin{cases}
			(F_1\circ L_0)(\tau,\theta,\eta),
				&\ \text{on}\ \T_\de,\\
			(F_2\circ L_{1-\de})(\tau,\theta,\eta),
					&\ \text{on}\ \T_{1-\de},
	\end{cases}
	\ee
for $(\tau,\theta,\eta)\in\T$ (see Figure~\ref{fig_trc}). If they are not disjoint, then they are said to be  totally real cylindrically cobordant if there is some translation $\tau$ of $\C^{3k}$ such that $\tau(\N_2)\cap\N_1=\emptyset$ and   $(\N_1,F_2)$ and $(\tau(\N_2),F_2\circ\tau)$ are  totally real cylindrically cobordant in the sense discussed above.
\end{definition}
Before we give a characterization of TRC cobordant pairs, we state a geometric consequence of \eqref{eq_trtcob}. This result will be referred to as the {\em neighborhood replacement result} in this paper. 

\begin{Prop}\label{prop_NRR}
Suppose $(\N_1,F_1)$ and $(\N_2,F_2)$ are disjoint totally-real cobordant parametrized tubes enclosing curves in $\C^{3k}$. Then, for a sufficiently small $\eps>0$, there is a smooth embedding $\Psi_{\N_1,\:\N_2}:S^1\times 2D^{2k}\rightarrow\C^{3k}$ such that 
	\begin{enumerate}
\item $\Psi_{\N_1,\:\N_2}$ is totally real on $(S^2\times 2D^{2k})\setminus(S^1\times\{0\})$.
\item $\Psi_{\N_1,\:\N_2}(\theta,r\eta)=F_1(\theta,(r-1)\eta)$ for $r\in[2-\eps,2]$ and $(\theta,\eta)\in S^1\times S^{2k-1}$. 
\item $\Psi_{\N_1,\:\N_2}$ coincides with $F_2$ on $S^1\times D^{2k}$. 
\end{enumerate}
\end{Prop}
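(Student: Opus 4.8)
\textbf{Proof proposal for Proposition~\ref{prop_NRR}.}

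The plan is to build $\Psi_{\N_1,\:\N_2}$ by concatenating three pieces along the radial direction of $S^1\times 2D^{2k}$: the inner core $F_2$ on $S^1\times D^{2k}$, the TRC cobordism $F$ on an intermediate annular region, and the outer collar coming from $F_1$. Concretely, I would identify $S^1\times(2D^{2k}\setminus D^{2k})$ with a cylinder $[0,1]\times S^1\times S^{2k-1}$ via a radial reparametrization $\rho$ sending $r\in[1,2]$ to $\tau=r-1$ (up to orientation), so that $\partial(S^1\times D^{2k})$ corresponds to $\T_0:=\{0\}\times S^1\times S^{2k-1}$ and $\partial(S^1\times 2D^{2k})$ to $\T_1$. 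On the inner disk bundle set $\Psi_{\N_1,\:\N_2}=F_2$; on the annular region set $\Psi_{\N_1,\:\N_2}=F\circ\rho$, where $F:\T\to\C^{3k}$ is the totally real cobordism guaranteed by Definition~\ref{def_trc}. The key point is that near $\T_0$ the cobordism $F$ agrees with $F_2\circ L_{1-\de}$, which under $\rho$ matches $F_2$ on a neighborhood of $\partial(S^1\times D^{2k})$ in $S^1\times(D^{2k}\setminus\{0\})$; hence the two pieces glue smoothly, giving $(3)$ after possibly shrinking to a genuine collar. Near $\T_1$, $F$ agrees with $F_1\circ L_0$, which under $\rho$ is exactly the reparametrized outer collar $F_1(\theta,(r-1)\eta)$ for $r$ near $2$; this yields $(2)$ for sufficiently small $\eps$.

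For property $(1)$: $F$ is a totally real embedding of $\T$ by hypothesis, so $\Psi_{\N_1,\:\N_2}$ is totally real on the image of the (open) annular region, which is $(S^1\times 2D^{2k})\setminus(S^1\times D^{2k})$ minus nothing — in particular on $(S^1\times 2D^{2k})\setminus(S^1\times \overline{D^{2k}})$. On the inner part $S^1\times D^{2k}$, $F_2=\Psi_{\N_1,\:\N_2}$ is totally real away from $S^1\times\{0\}$ because $(\N_2,F_2)$ is a tube enclosing a curve (Definition~\ref{def_tubes}(a)). The one place that needs a word is the interface $S^1\times\partial D^{2k}$: here the two definitions agree on an overlap (because of the $L$-normalizations in \eqref{eq_trtcob}), so total realness on one side forces it on the other, and $\Psi_{\N_1,\:\N_2}$ is totally real on all of $(S^1\times 2D^{2k})\setminus(S^1\times\{0\})$.

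It remains to check that $\Psi_{\N_1,\:\N_2}$ is a (global) embedding, not merely an immersion. Injectivity on the inner disk bundle and on the annular region separately is clear from $F_2$ and $F$ being embeddings; the only possible failure is a collision between a point of $\N_2=F_2(S^1\times D^{2k})$ and a point of the annular image not already accounted for by the gluing. Since $F$ is an embedding of $\T$ into $\C^{3k}$ and, by \eqref{eq_trtcob}, $F(\T_\de)$ is precisely the part of $\N_2$ adjacent to its boundary while $F((\delta,1-\delta)\times S^1\times S^{2k-1})$ is disjoint from it, the images overlap only along the collar used for the smooth gluing, and there the identification is tautological. This is the step I expect to be the main (though minor) obstacle: one must be slightly careful that the TRC cobordism $F$, which a priori is only required to \emph{match} $F_1$ and $F_2$ near the two ends, does not have its interior image bumping back into $\N_1$ or $\N_2$. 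This is exactly why Definition~\ref{def_trc} builds in the collars $\T_\de,\T_{1-\de}$ and, when $\N_1\cap\N_2\neq\emptyset$, allows a preliminary translation; invoking that (together with shrinking $\de$ and $\eps$) closes the argument.
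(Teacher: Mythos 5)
Your concatenation is exactly the map the paper builds (its $\Xi$): $F_2$ on the inner solid torus, the cobordism $F$ reparametrized radially on the annular region, and the outer collar of $F_1$, with the overlap identifications coming from \eqref{eq_trtcob}; properties (2), (3) and the total-reality statement (1) come out as you say. The problem is your last step. Definition~\ref{def_trc} only requires $F$ to be a totally real embedding of $\T$ that \emph{agrees} with $F_1\circ L_0$ on $\T_\de$ and with $F_2\circ L_{1-\de}$ on $\T_{1-\de}$; it does \emph{not} say that $F\big((\de,1-\de)\times S^1\times S^{2k-1}\big)$ is disjoint from $\N_1\cup\N_2$. So your claim that ``the images overlap only along the collar used for the gluing'' is not justified: the interior of the cylinder may perfectly well return and meet $F_2(S^1\times (1-\de)D^{2k})$ or the outer collar of $\N_1$ that you keep for $r\in[2-\eps,2]$. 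Neither the built-in collars $\T_\de,\T_{1-\de}$, nor the translation clause (which only serves to separate $\N_1$ from $\N_2$ when they are not disjoint), nor shrinking $\de$ or $\eps$ rules this out, since the offending intersections occur in the uncontrolled middle of the cobordism. As stated, your glued map is only guaranteed to be an injective immersion on each piece, i.e.\ a local embedding globally, not an embedding.

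The paper closes this gap by a perturbation rather than by disjointness: it observes that the concatenated map is smooth, a local embedding on the interior, and a genuine embedding on an open neighborhood of $K=(S^1\times D^{2k})\cup\bdy(S^1\times 2D^{2k})$ (this much \emph{is} forced by \eqref{eq_trtcob} together with $\N_1\cap\N_2=\emptyset$ and compactness), and then invokes the relative version of the weak Whitney embedding theorem to replace it by an embedding $\Psi_{\N_1,\N_2}$ that coincides with it near $K$. The dimension count $2(2k+1)=4k+2<6k$ for $k\geq 2$ is what allows generic removal of the double points away from $K$, and a $C^1$-small perturbation preserves total reality off $S^1\times\{0\}$ (the CR-singular circle sits inside the region that is left untouched), so conclusions (1)--(3) survive the perturbation. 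If you add this relative-Whitney step (and choose the radial reparametrization so that it matches the linear collar maps near $r=1$ and $r=2$, as the paper does with its function $\tau$, to get smoothness across the seams), your argument becomes complete.
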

\begin{proof} 
Suppose $F:\T\rightarrow\C^{3k}$ is an embedding satisfying \eqref{eq_trtcob}. Let $\tau:[1-\de,2-\de]\rightarrow[0,1]$ be a smooth bijective map such that 
	\bes
		\tau(r)=\begin{cases}
			2-\de-r,& \text{when}\ r\in[1-\de,1],\\
			2-r,& \text{when}\ r\in[2-\de,2].
				\end{cases}
	\ees
Then the map $\Xi:S^1\times 2D^{2k}$ given by 
	\bes
		\Xi(\theta,r\eta)=\begin{cases}
			F_2(\theta,r\eta),\ &\text{when}\ r\in[0,1-\de],\\
			F_2(\theta,r\eta)=F(2-\de-r,\theta,\eta),
				\ &\text{when}\ r\in[1-\de,1],\\
			F(\tau(r),\theta,\eta),\ & \text{when}\ r\in [1,2-\de],\\
			F_1(\theta,(r-1)\eta)=F(2-r,\theta,\eta),\
			 & \text{when}\ r\in[2-\de,2],
		\end{cases}
	\ees
is smooth (see Figure~\ref{fig_trc2}). Furthermore, $\Xi$ is a local embedding everywhere in the interior of its domain, and is an embedding in an open neighborhood of $K=\left(S^1\times D^{2k}\right)\cup \bdy\left(S^1\times 2D^{2k}\right)$. Thus, by the relative version of the weak Whitney embedding theorem, there is an embedding $\Psi_{\N_1,\N_2}$ with the desired properties.  
\end{proof}

\begin{multicols}{2}

\begin{figure}[H]
\begin{overpic}[grid=false,tics=10,scale=0.8]{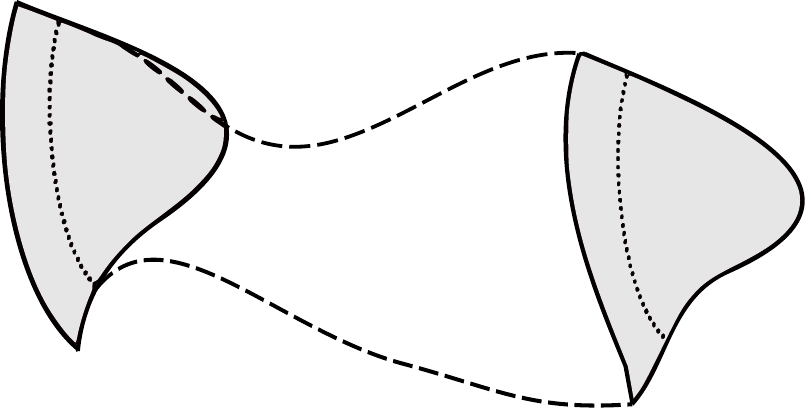}
\put(25,57){$\N_1$}
\put(150,57){$\N_2$}
\put(12,5){\tiny $\tau=0$}
\put(25,19){\tiny $\tau=\de$}
\put(135,-10){\tiny $\tau=1-\de$}
\put(160,12){\tiny $\tau=1$}
\end{overpic}
\smallskip
\caption{$\N_1\cup F(\T)\cup\N_2$}\label{fig_trc}
\end{figure}
\columnbreak    
\begin{figure}[H]
\vspace{-7pt}
\begin{overpic}[grid=false,tics=10,scale=0.8]{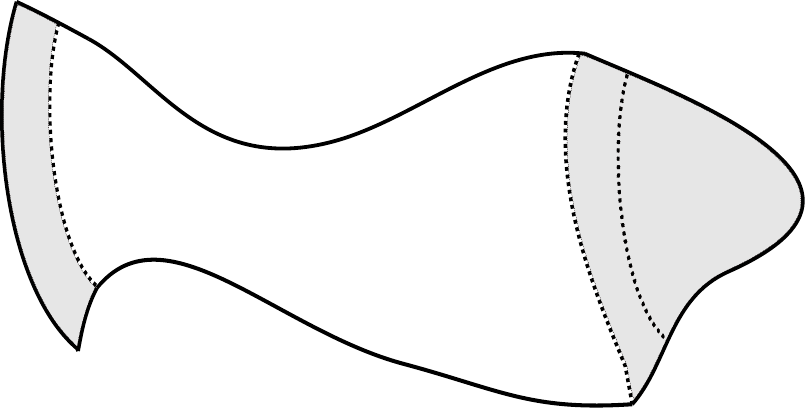}
\put(3,33){\rotatebox{104}{\small collar}}
\put(3.5,57){\rotatebox{90}{\tiny $(\bdy\N_1)$}}
\put(155,55){$\N_2$}
\put(15,4){\tiny $r=2$}
\put(23,19){\tiny $r=2-\de$}
\put(138,-10){\tiny $r=1$}
\put(162,13){\tiny $r=1-\de$}
\put(195,48){\tiny $r=0$}
\end{overpic}
\bigskip
\caption{$\Xi(S^1\times 2D^{2k})=F(\T)\cup\N_2$}\label{fig_trc2}
	\end{figure}
\end{multicols}

We now state the aforementioned algebro-topological characterization of pairs of TRC cobordant parametrized tubes enclosing curves. In particular, we see that the index is only a partial invariant in this respect. 

\begin{lemma}\label{lem_TRcob} Suppose $(\N_1,F_1)$ and $(\N_2,F_2)$ are disjoint parametrized tubes enclosing curves in $\C^{3k}$. Then, they are TRC cobordant if and only if their frame maps, $\widehat F_1$ and $\widehat F_2$, induce the same element in $[S^1\times S^{2k-1},W_{2k+1,3k}]$. 
\end{lemma}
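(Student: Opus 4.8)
The plan is to prove both implications by transferring the problem from the tubes in $\C^{3k}$ to their frame maps, using the $h$-principle for totally real embeddings in the spirit of Gromov--Eliashberg together with the cobordism setup of Definition~\ref{def_trc}. First I would handle the easy direction: if $(\N_1,F_1)$ and $(\N_2,F_2)$ are TRC cobordant, let $F:\T\to\C^{3k}$ be the connecting totally real embedding satisfying \eqref{eq_trtcob}. Associated to $F$ is a frame map $\widehat F:[0,1]\times S^1\times S^{2k-1}\to W_{2k+1,3k}$ obtained by Gram--Schmidt orthogonalizing $DF$ (the columns being everywhere $\C$-linearly independent since $F$ is a totally real embedding of a $(2k+1)$-manifold; here the collar conditions in \eqref{eq_trtcob} ensure that on $\T_\de$ and $\T_{1-\de}$ this frame map is, after the standard reparametrization $L_c$, exactly $\widehat F_1$ and $\widehat F_2$ respectively). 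Since $\T=[0,1]\times S^1\times S^{2k-1}$ is a cylinder over $S^1\times S^{2k-1}$, this $\widehat F$ is literally a homotopy in $W_{2k+1,3k}$ from $\widehat F_1$ to $\widehat F_2$ (one has to absorb the reparametrization $L_0$ versus $L_{1-\de}$, but $L_c$ restricted to the boundary sphere is the identity up to a scaling that is homotopically trivial in $V_{2k+1,3k}$, exactly as in the proof of Lemma~\ref{lem_index}). Hence $[\widehat F_1]=[\widehat F_2]$ in $[S^1\times S^{2k-1},W_{2k+1,3k}]$.

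For the converse, suppose $[\widehat F_1]=[\widehat F_2]$. After a translation we may assume $\N_1$ and $\N_2$ are disjoint. The goal is to produce a totally real embedding $F:\T\to\C^{3k}$ realizing \eqref{eq_trtcob}. The natural tool is the relative $h$-principle for totally real immersions/embeddings: a totally real embedding of $\T$ into $\C^{3k}$ with prescribed behavior on the collars $\T_\de\cup\T_{1-\de}$ exists provided (i) there is a formal solution, i.e.\ a bundle monomorphism $T\T\oplus\underline{\R}\to \underline{\C^{3k}}$ whose image is totally real, extending the one given on the collars (which is $DF_1$, resp.\ $DF_2$, up to the $L_c$ reparametrization), and (ii) the dimension inequality $2k+1<3k$ (equivalently $k>1$) holds, which it does, so the $h$-principle applies (one first gets a totally real immersion of $\T$, then, because $2(2k+1)+1 = 4k+3 \le 6k$ fails in general — wait, $\dim\T = 2k+1$ so a generic immersion into real dimension $6k$ is an embedding since $2(2k+1) = 4k+2 \le 6k$ for $k\ge 1$ — so after a small perturbation keeping the totally real condition and the collar conditions one gets an embedding; alternatively invoke the totally real embedding $h$-principle directly). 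The existence of the required formal solution extending the collar data is precisely a statement about the obstruction to extending a map $S^1\times S^{2k-1}\times\{0,1\} \to W_{2k+1,3k}$ over $S^1\times S^{2k-1}\times[0,1]$, rel the ends: such an extension exists if and only if the two end-maps are homotopic, i.e.\ $[\widehat F_1]=[\widehat F_2]$. So one sets up the formal solution by choosing such a homotopy $\Phi:S^1\times S^{2k-1}\times[0,1]\to W_{2k+1,3k}$ matching $\widehat F_1,\widehat F_2$ on the ends, combines it with the underlying smooth map of $\T$ into $\C^{3k}$ (which one can arrange to agree with the $F_i\circ L_c$ near the ends and be, say, a thin tube joining the two curves), and feeds this formal data into the $h$-principle to get a genuine totally real embedding. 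Finally one applies Proposition~\ref{prop_NRR}'s style of collar-smoothing so that \eqref{eq_trtcob} holds on the nose.

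The main obstacle I expect is step (i)–(ii) of the converse: carefully verifying that the relative, parametric $h$-principle for totally real embeddings applies in this exact geometric setup with the prescribed collar boundary conditions, and that the only obstruction to a formal solution is the homotopy class of the frame maps in $[S^1\times S^{2k-1},W_{2k+1,3k}]$ — in particular that the $\underline{\R}$-summand (coming from the extra real coordinate, reflecting that $\T$ has dimension $2k+1$ but we are looking at $(2k+1)$-frames, i.e.\ the tangent frame together with the outward collar direction) does not introduce additional obstructions, and that embedding (as opposed to merely immersion) can be achieved rel the collars by general position. Once the $h$-principle is correctly invoked, the rest — orthogonalization, the $L_c$ bookkeeping, and the collar adjustment via the weak Whitney embedding theorem as in Proposition~\ref{prop_NRR} — is routine. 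A secondary technical point is ensuring the connecting smooth (non-holomorphic) map underlying the formal solution can be taken to be an embedding of the cylinder $\T$ disjoint from the $\N_i$ except along the collars, which is standard since $2\dim\T < \dim_\R\C^{3k}$ for $k\ge 2$.
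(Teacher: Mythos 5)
Your proposal is correct and follows essentially the same route as the paper: the frame map of the connecting cylinder supplies the homotopy in the easy direction, and in the converse a homotopy of frame maps is packaged (together with a smooth map of $\mathbf{T}$ into $\C^{3k}$ matching $F_1\circ L_0$, $F_2\circ L_{1-\de}$ on the collars) into a formal solution of the open, ample totally real relation that is genuine near $\partial\mathbf{T}$, the relative $h$-principle yields a totally real immersion with the prescribed collar behavior, and a relative Whitney/general-position perturbation upgrades it to an embedding. The only slip is the stabilization $T\mathbf{T}\oplus\underline{\mathbb{R}}$: since $\dim\mathbf{T}=2k+1$ already, the formal datum is simply a totally real bundle monomorphism $T\mathbf{T}\to\underline{\C^{3k}}$, so the extra $\underline{\mathbb{R}}$-summand you worry about does not arise.
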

\begin{proof} First, we fix a smooth translation-invariant (in $\tau$) field of frames on $\T$ as follows. 
	\bes
		\sigma_\T(\tau,\theta,\eta)
		=(L_{\tau}^*\sigma)(\tau,\theta,\eta).
	\ees
For any parametrization $g:S^1\times D^{2k}\rightarrow\C^{3k}$, let $g^c=g\circ L_c$. Then, because of the translation-invariance,
	\bes
		(g^c_*\sigma_\T)(g^c(c,\cdot))=(g_*\sigma)(g(\cdot))|_{S^1\times S^{2k-1}},
	\ees
and, if $\tau>c$, then $(g^c_*\sigma_\T)(g_c(\tau,\cdot))$ is homotopic to $(g_*\sigma)(g(\cdot))|_{S^1\times S^{2k-1}}$. In short, setting 
	\bes
		\widehat g_\tau^{\:c}:\xi\mapsto 	
			[(g^c_*\sigma_\T)(g^c(\tau,\xi))]^{\operatorname{GS}},
				\quad  \xi\in S^1\times S^{2k-1},
\ees
we have that for any $c\in\rl$ and $\tau\in[c,c+1)$, 
	\be\label{eq_ghat}
 \text{$\widehat g_\tau^{\:c}$ and $\widehat g$ induce the same element in $[S^1\times S^{2k-1}, W_{2k+1,3k}]$.}\tag{$\dagger$}
	\ee

Now, if $(\N_1,F_1)$ and $(\N_2,F_2)$ are TRC cobordant, then $F$ exists so that \eqref{eq_trtcob} holds on $\T$. Then, $(\tau,\theta,\eta)\mapsto (F_*\sigma_\T)(F(\tau,\theta,\eta))$, $0\leq \tau\leq 1-\de$ is a homotopy between $\widehat{F_1}$ and $\widehat{F_2}$ in $W_{2k+1,3k}$. 

To prove the converse, we rely on the relative h-principle for ample differential relations (\cite[18.4.1]{ElMi02}). For this, we first gather some important observations about the relevant spaces. Wherever possible, we follow the notation employed in \cite{ElMi02}.

\begin{enumerate}
\item Let $J^1(\T,\C^{3k})$ denote the space of $1$-jets of smooth maps from $\T$ into $\C^{3k}\cong\rl^{6k}$.  Since $\T$ is parallelizable, we may write $J^1(\T,\rl^{6k})\cong\T\times \C^{3k}\times M_{2k+1,3k}$, where $M_{2k+1,3k}$ is the space of complex $(2k+1)\times (3k)$ matrices. We use the field of frames $\sigma_\T$ to make this identification. This way, if $G:\T\rightarrow\C^{3k}$ is a smooth map, then $DG(\cdot)\equiv (G_*\sigma_{\T})(G(\cdot))$ (as matrices). Thus, $Z\mapsto (Z,G(Z),(G_*\sigma_{\T})(G(Z)))$ is a holonomic section of $J^1(\T,\C^{3k})$.
\smallskip

\item We recall that $W_{2k+1,3k}\subset V_{2k+1,3k}\subset M_{2k+1,3k}$, and if $A, B\in V_{2k+1,3k}$, then any homotopy between $[A]^{\operatorname{GS}}$ and $[B]^{\operatorname{GS}}$ in $ W_{2k+1,3k}$ lifts to a homotopy in $ V_{2k+1,3k}$.
\smallskip

\item We let $\R_{\operatorname{TR}}$ denote the differential relaton in $J^1(\T,\C^{3k})$ corresponding to totally real immersions $f:\T\rightarrow\C^{3k}$. Then, $\R_{\operatorname{TR}}$ is an open ample differential relation (see \cite[19.3.1]{ElMi02}).
\end{enumerate}

The relative $h$-principle states that if $\mathfrak H:\T\rightarrow \R_{\operatorname{TR}}\subset J^1(\T,\C^{3k})$ is a formal solution of $\R_{\operatorname{TR}}$ that is a genuine solution near $\bdy\T$, then there is a homotopy of formal solutions $\mathfrak H_\tau:\T\rightarrow \R_{\operatorname{TR}}$, $\tau\in[0,1]$, joining $\mathfrak H_0=\mathfrak H$ with a genuine solution $\mathfrak H_1$ of $\R_{\operatorname{TR}}$ such that for all $\tau$, $\mathfrak H_\tau=\mathfrak H$ near $\bdy\T$. 

To complete the proof of our claim, we note that if $(\N_1,F_1)$ and $(\N_2,F_2)$ are disjoint parametrized tubes enclosing curves in $\C^{3k}$, then there exists a smooth map $G:\T\rightarrow \C^{3k}$ such that 
	\be\label{eq_WhitEmbThm}
		G=\begin{cases}
			(F_1\circ L_0),
				&\ \text{on}\ \T_\de,\\
			(F_2\circ L_{1-\de}),
					&\ \text{on}\ \T_{1-\de}.
	\end{cases}
	\ee
for $\de\in(0,1/2)$ small enough. For this, we fix a $\de<<1/2$ and apply the isotopy version of the Weak Whitney Embedding Theorem 
to produce an isotopy between $(F_1\circ L_0)(\T\cap\{\tau=\de\})$ and $(F_2\circ L_{1-\de})(\T\cap\{\tau=1-\de\})$. This isotopy continuously connects $F_1\circ L_0$ on $\T_\de$ and $F_2\circ L_{1-\de}$ on $\T_{1-\de}$ to give a continuous map $\wt G:\T\rightarrow\C^{3k}$ that satisfies \eqref{eq_WhitEmbThm}. Now, by the relative version of the Whitney Approximation Theorem, $\wt G$ can be approximated by a smooth $G$ satisfying \eqref{eq_WhitEmbThm}. 

Now, suppose $\widehat{F_1}$ and $\widehat{F_2}$ induce the same homotopy class in $[S^1\times S^{2k-1},W_{2k+1,3k}]$. In view of $(b)$ and \eqref{eq_ghat} above, there is a homotopy  $H:[\de,1-\de]\times S^1\times S^{2k-1}\rightarrow V_{2k+1,3k}$ between $\widehat{(F_1)}^0_\de=(G_*\sigma_{\T})(G(\de,\cdot))$ and $\widehat{(F_2)}^{1-\de}_{1-\de}=(G_*\sigma_{\T})(G(1-\de,\cdot))$. Setting $\wt{H}:\T\mapsto V_{2k+1,3k}$ as 
	\bes
			\wt H(\tau,\xi)=\begin{cases}
			(G_*\sigma_\T)(G(\tau,\xi)),
				&\ \text{on}\ \T_\de,\\
			H(\tau,\xi), &\ \text{when}\ \tau\in[\de,1-\de],\\
			(G_*\sigma_\T)(G(\tau,\xi)),
					&\ \text{on}\ \T_{1-\de}.
	\end{cases}
	\ees
Then, $\mathfrak H:(\tau,Z)\mapsto (Z,G(Z),\wt H(\tau,Z))$ is a formal solution of $\R_{\operatorname{TR}}$ that is a genuine solution near $\bdy\T$. Thus, by the $h$-principle cited above, there is a genuine solution of $\R_{\operatorname{TR}}$ that coincides with $\mathfrak H$ near $\bdy\T$, which gives a totally real immersion $\wt F:\T\rightarrow\C^{3k}$ satisfying \eqref{eq_trtcob}. Now, by another application of the (relative) Whitney Embedding Theorem, $\wt F$ can be approximated by a totally real embedding satisfying \eqref{eq_trtcob}.  
\end{proof}

The above lemma shows that the TRC cobordism class of any parametrized tube $(\N,F)$ enclosing a curve in $\C^{3k}$ contains all tubes of the form $(A(\N),F\circ A)$, where $A:\C^{3k}\rightarrow\C^{3k}$ is of the form $z\mapsto rz+b$, $r>0$ and $b\in\C^{3k}$. 

\subsection{Polynomially convex models}\label{subsec_models}
So far, we have shown that there is an injective map 
\bea\label{eq_isom}
\begin{Bmatrix}
\text{equivalence classes of parametrized tubes}\\
\text{enclosing curves under TRC cobordism}
\end{Bmatrix}\hookrightarrow 
[S^1\times S^{2k-1},W_{2k+1,3k}].
\eea
In this section, we establish the surjectivity of this map. In fact, we construct polynomially convex representatives of the generators of $[S^1\times S^{2k-1},W_{2k+1,3k}]$, and then indicate how tubes of other indices can be obtainted in a similar way. Our construction is a modification of the Beloshapka--Coffman normal form \eqref{eq_BCform}. 

Consider the following set in $\C\times\rl^{2k-1}$.
	\bes
		S=\left\{(z_1,x_2,...,x_{2k})\in\C\times\rl^{2k-1}:
			|z_1|^2+(x_{3})^2+\cdots +(x_{2k-1})^2
			+\left(\sqrt{x_2^2+x_{2k}^2}-1\right)^2=\frac{1}{4} \right\}.
	\ees
Note that $S$ is a neighborhood of the unit circle in the $x_2x_{2k}$-plane. Now, consider the following two graphs over $S$ in $\C^{3k}$.  
	\bea\label{eq_modelsing}
\M^1:=
\left\{Z\in\C^{3k}:
\begin{aligned}
	& (z_1,x_2,...,x_{2k})\in S,\\
	&  y_j=0,\ 2\leq j\leq 2k,\\
	& z_{2k+1}=\zobar^2,\\
	&z_{2k+2}=|z_1|^2+\zobar\left(\sqrt{x_2^2+x_{2k}^2}-1\right)+i\zobar x_3,\\
	&z_\ell
		=\zobar (x_{2(\ell-2k-1)}+ix_{2(\ell-2k-1)+1}),\ 2k+3\leq \ell\leq 3k,\\
\end{aligned}
\right\}
	\eea
and
	\bea\label{eq_modelsing2}
\M^2:=
\left\{Z\in\C^{3k}:
\begin{aligned}
	& (z_1,x_2,...,x_{2k})\in S,\\
	&  y_j=0,\ 2\leq j\leq 2k,\\
	&z_{2k+1}=\zobar^2(x_2+ix_{2k}),\\
	&z_{2k+2}=|z_1|^2+\zobar\left(\sqrt{x_2^2+x_{2k}^2}-1\right)+i\zobar x_3,\\
	&z_\ell
		=\zobar (x_{2(\ell-2k-1)}+ix_{2(\ell-2k-1)+1}),\ 2k+3\leq \ell\leq 3k,\\
\end{aligned}
\right\}.
	\eea
Since $\M^1$ and $\M^2$ are totally real except along the circle
	\bes
		\gamma=\left\{(0,x_2,0,...,0,x_{2k},0,...,0)\in\Cn:x_2,x_{2k}\in\rl,\ 
			x_2^2+x_{2k}^2=1\right\},
	\ees
where they have CR dimension $1$, they are tubes enclosing the CR-singular curve $\gamma$. We now prove the crucial fact the $\M_1$ and $\M_2$ are polynomially convex subsets of $\C^{3k}$. As a side note, we also observe, that they are biholomorphically equivalent. 
\begin{lemma}\label{lem_polcvx}  There is a biholomorphism $\Theta$ defined in a neighborhood of $\M^1$ in $\C^{3k}$ such that $\M^2=\Theta(\M^1)$. Moreover, $\gamma$, $\M^1 $ and $\M^2 $ are polynomially convex.
\end{lemma}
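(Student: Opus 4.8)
The plan is to treat the three assertions in turn, with the bulk of the work going into polynomial convexity. First I would exhibit the biholomorphism $\Theta$. Both $\M^1$ and $\M^2$ are graphs over the same base set $S\subset\C\times\rl^{2k-1}$, and they differ only in the defining equation for $z_{2k+1}$: for $\M^1$ it is $z_{2k+1}=\zobar^2$, while for $\M^2$ it is $z_{2k+1}=\zobar^2(x_2+ix_{2k})$. On (a neighborhood of) $S$ the function $x_2+ix_{2k}$ is close to a point of the unit circle, hence nonvanishing; writing $w=x_2+ix_{2k}=\tfrac12(z_2+z_{2k})+\tfrac{i}{2}\cdots$ as a holomorphic coordinate expression in the $z_j$'s (using $\ima z_j=0$ on the manifold to recover $x_j$ from $z_j$), one sees that the map $\Theta$ which multiplies the $z_{2k+1}$-coordinate by this nonvanishing holomorphic factor and fixes all other coordinates is a biholomorphism of a neighborhood of $\M^1$ carrying $\M^1$ onto $\M^2$. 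Since polynomial convexity is not preserved by biholomorphisms in general, this reduces nothing by itself, but it does mean we only need to prove polynomial convexity for one of the two, say $\M^1$; I would still state it for both and note the reduction.

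For polynomial convexity of $\M^1$, the natural approach is a fibered/sliced argument over the base circle parameter, exactly mirroring how one proves local polynomial convexity of the Beloshapka--Coffman normal form $\B^{2k+1,3k}$ (Lemma~\ref{lem_polcvx} asserts this too, and the model is a modification of \eqref{eq_BCform}). Concretely: parametrize the unit circle in the $x_2x_{2k}$-plane by an angle, so that $S$ fibers over $S^1$ with each fiber a small $(2k-1)$-ball; on each such slice the equations for $z_{2k+1},\dots,z_{3k}$ express the ``extra'' coordinates as $\zobar^2$ and $\zobar$ times real-linear functions of the slice variables, which is precisely the local Bishop-type normal form whose polynomial convexity is classical. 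I would use the standard device: find, for each point of $\C^{3k}$ outside $\M^1$, a polynomial (in fact, one can often take a polynomial that is linear in the ``graph'' coordinates $z_{2k+1},\dots,z_{3k}$ plus a controlled function of $z_1$ and the $x_j$) separating it from $\M^1$. The key estimate is that $|z_1|^2$ together with $\rea(\zobar(\cdots))$ appearing in the $z_{2k+2}$-equation gives a strictly plurisubharmonic-type bound: off the CR-singular curve $\gamma$ the manifold is totally real (hence locally polynomially convex automatically), so the only issue is a neighborhood of $\gamma$, and there the term $|z_1|^2$ in $z_{2k+2}$ plays the role of Bishop's $|z|^2$, forcing the hull not to propagate. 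Finally, a Kallin-lemma / union argument (or a partition-of-unity gluing of local separating functions via the circle fibration, using that the base $S^1$ is one-dimensional so a single angular coordinate suffices) upgrades local polynomial convexity near $\gamma$, combined with total reality away from $\gamma$, to global polynomial convexity of $\M^1$. Polynomial convexity of $\gamma$ itself is easy: $\gamma$ is a real circle lying in a totally real plane (the $x_2x_{2k}$-plane inside $\rl^{2k}\times\{0\}$ with the remaining coordinates pinned), so it is contained in a totally real affine subspace and is therefore polynomially convex (a circle in $\rl^n\subset\C^n$ is polynomially convex).

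The main obstacle I anticipate is the gluing step: proving that the \emph{local} polynomial convexity near each point of $\gamma$, obtained from the Beloshapka--Coffman-type normal form on slices, patches to \emph{global} polynomial convexity of the whole tube $\M^1$. The slices are not independent—they are joined around a circle—so one cannot simply invoke a local result pointwise; one needs either a Kallin-type lemma with a polynomial that is globally monotone along the circle direction (e.g. using $z_1$ or a function of $x_2+ix_{2k}$ to ``separate the slices''), or the general principle that a compact set which is locally polynomially convex and whose union with its hull is still well-behaved is polynomially convex, applied carefully. A secondary technical point is verifying that the holomorphic factor $x_2+ix_{2k}$ in the definition of $\Theta$ genuinely extends holomorphically (not merely CR) to a full neighborhood of $\M^1$ in $\C^{3k}$, which requires writing it in terms of the ambient holomorphic coordinates rather than the real slice coordinates; since on $\M^1$ we have $\ima z_j=0$ this is a matter of choosing the extension $x_j\rightsquigarrow z_j$, and one checks the resulting map is biholomorphic on a neighborhood by an inverse-function-theorem computation of its differential along $\M^1$.
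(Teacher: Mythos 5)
Your handling of $\gamma$ (a compact subset of the totally real $x_2x_{2k}$-plane) and of $\Theta$ is essentially the paper's: the holomorphic extension of $x_2+ix_{2k}$ is simply $z_2+iz_{2k}$, and the map multiplying the $z_{2k+1}$-coordinate by $z_2+iz_{2k}$ is an explicit biholomorphism of $\C^{3k}\setminus\{z_2+iz_{2k}=0\}$ onto itself, so no inverse-function-theorem argument is needed; since $\M^1$ is a compact subset of that set, $\Theta$ is defined near $\M^1$ and carries it onto $\M^2$. One logical slip here: as you yourself observe, polynomial convexity is not a biholomorphic invariant, so the existence of $\Theta$ gives \emph{no} reduction of the convexity of $\M^2$ to that of $\M^1$; both must be proved, and your sentence claiming that only one of the two needs to be treated contradicts the sentence preceding it.

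The genuine gap is the main assertion itself. Your plan --- local polynomial convexity near $\gamma$ from a Bishop/Beloshapka--Coffman picture, total reality off $\gamma$, then an unspecified Kallin-type or partition-of-unity gluing --- cannot succeed in that generality, and you correctly flag the gluing as the unresolved obstacle. Local polynomial convexity plus total reality does not imply global polynomial convexity: the standard torus in $\C^2$ is totally real with the closed bidisc as its hull, and Subsection~\ref{subsec_Alexander} of this very paper exhibits a fully-TR tube $\A$, diffeomorphic to $S^1\times D^{2k}$ exactly like $\M^1$, containing an Alexander set with nontrivial hull. So any proof must exploit a global feature of the models, and the paper's does so in one stroke: the polynomial map $F:Z\mapsto(z_2,\dots,z_{2k})$ is \emph{real-valued} on $\M^j$ (because $y_j=0$ there), so by the iterated form of \cite[Theorem~1.2.16]{St07}, $\M^j$ is polynomially convex if and only if every fiber $F^{-1}(\boldsymbol t)\cap\M^j$, $\boldsymbol t\in\rl^{2k-1}$, is. Each such fiber lies on a graph over $\tfrac12\clD$ in the $z_1$-variable on which the coordinate polynomials restrict to $z_1$ and $c\,\zobar^{2}$ with $c\neq 0$ ($c=1$ for $\M^1$, $c=x_2+ix_{2k}$ for $\M^2$, nonvanishing on $S$); by Minsker's theorem \cite{MI76} the algebra generated by these is all of $\cont\bigl(\tfrac12\clD\bigr)$, so $\mathcal{P}=\cont$ on each fiber and the fibers are polynomially convex. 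This also shows why both models are handled at once, with no appeal to $\Theta$. If you wish to keep a slicing argument, this is the form it must take: slice by the globally defined real-valued polynomials $z_2,\dots,z_{2k}$ rather than by the angular variable (which is not a polynomial), and settle the fibers by Minsker's approximation theorem rather than by a local normal-form analysis; then no gluing lemma is needed.
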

\begin{proof} For the first part of the claim, consider the map 
	\bes
	\Theta:\left(z_1,...,z_{2k},z_{2k+1},z_{2k+2},...,z_{3k}\right)\mapsto 
		\left(z_1,...,z_{2k},z_{2k+1}(z_2+iz_{2k}),z_{2k+2},...,z_{3k}\right),
	\ees
which is a biholomorphism from $\C^{3k}\setminus\{z_2+iz_{2k}=0\}$ onto itself. It is clear that $\Theta(\M^1 )=\M^2 $. Since $\M^1 $ is a compact subset of $\C^{3k}\setminus\{z_2+iz_{2k}=0\}$, $\Theta$ is defined in a (sufficiently small) neighborhood of $\M^1 $ in $\C^{3k}$. 

Now, for the second claim, we first note that the polynomial convexity of $\gamma$ follows from that fact that it is a compact subset of a totally real plane (the $x_2x_{2k}$-plane). Next, we recall the following criterion (an iterated version of Theorem 1.2.16 from~\cite{St07}). {\em If $X\subset\Cn$ is a compact subset and if $F:X\rightarrow \rl^m$ is a map whose components are in $\mathcal{P}(X)$, then $X$ is polynomially convex if and only if $F^{-1}(\boldsymbol t)$ is polynomially convex for each $\boldsymbol t\in\rl^m$.} Now, choosing the restriction to $M ^j$ of the map $F:\C^{3k}\rightarrow\C^{2k-1}$ given by $Z\mapsto(z_2,...,z_{2k})$, 
and noting that the subalgebra generated by $z$ and $c\zbar^2$, $c\neq 0$, in $\cont\left(\frac{1}{2}\clD\right)$ coincides with $\cont\left(\frac{1}{2}\clD\right)$ (see~\cite{MI76}), we have that every fibre of $F$ in either  $M ^1$ or $M ^2$ is polynomially convex. Hence, the claim. 
\end{proof}

We now fix parametrizations of $\M^1$ and $\M^2$. Let $\iota:S^1\times D^{2k}\rightarrow S$ be the map given by
		\bes
		\iota:(\theta,z,t,\zt,s)\mapsto\left(\frac{1}{2}x,-\frac{1}{2} y,\left(1+\frac{1}{2} t\right)\cos\theta,\frac{1}{2} s
			,\frac{1}{2} u_1,\frac{1}{2} v_1,...,\frac{1}{2} u_{k-2},\frac{1}{2} v_{k-2},\left(1+\frac{1}{2} t\right)\sin\theta\right).
	\ees
As each $M ^j$ is a graph over $S $, the parametrization of $S $ can be pushed forward via the graphing map to obtain a parametrizing map of $M^j $, which we denote by $H^j$, $j=1,2$. 
\begin{theorem}\label{thm_inequivmodels} Let $k\geq 2$. Then, $\operatorname{ind}_{H^1}(\M^1 )=\operatorname{ind}_{H^2}(\M^2 )=1$. Moreover,  
	\begin{itemize}
	\item [$(i)$] when $k$ is even, $[\widehat H_1]=[\widehat H_2]$ in $[S^1\times S^{2k-1},W_{2k+1,3k}]$, but
%
	\item [$(ii)$] when $k$ is odd, $[\widehat H_1]\neq [\widehat H_2]$ in $[S^1\times S^{2k-1},W_{2k+1,3k}]$.
\end{itemize}

\end{theorem}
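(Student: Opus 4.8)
The plan is to compute the homotopy classes $[\widehat{H^1}]$ and $[\widehat{H^2}]$ in $[S^1\times S^{2k-1},W_{2k+1,3k}]$ using the splitting from Lemma~\ref{lem_homclass}. Since both models enclose the same CR-singular curve $\gamma$, I expect the slice component to be the easy part: the restriction of either frame map to a slice $\{p\}\times S^{2k-1}$ should be handled essentially as in Lemma~\ref{lem_gentors}, by pushing forward the coordinate frame under the graphing parametrization and Gram--Schmidt-orthogonalizing. Concretely, I would first compute $D(H^j)$ along $S^1\times\{0\}$ (i.e., along $\gamma$) and observe that the leading-order behavior in the $\eta$-directions is governed by the quadratic term $z_{2k+1}=\zobar^2$ (for $\M^1$) or $z_{2k+1}=\zobar^2(x_2+ix_{2k})$ (for $\M^2$), while the linear terms involving $\zobar$ contribute the other $2k-1$ frame vectors. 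After orthogonalization, for a fixed $p=1$ the slice map $\widehat{H^j}_{\sli}$ should reduce, up to homotopy in $V_{2k+1,3k}$, to the standard inclusion $\alpha$ of $S^{2k-1}$ (composed with a degree-$1$ self-map coming from the coordinate change $\iota$ and the reflection $z\mapsto\zbar$ built into the normal form); by Remark~\ref{rmk_deg} this gives $\operatorname{ind}_{H^j}(\M^j)=\pm 1$, and checking orientations pins down the sign to $+1$. For part $(i)$, when $k$ is even, $\pi_{2k}(W_{2k+1,3k})=0$ by \eqref{eq_homStfl}, so the group $[S^1\times S^{2k-1},W_{2k+1,3k}]\cong\Z$ is detected entirely by the slice, hence $[\widehat{H^1}]=[\widehat{H^2}]=+1$ and they agree.

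The substance is part $(ii)$. Here I would exploit the biholomorphism $\Theta$ from Lemma~\ref{lem_polcvx} with $\M^2=\Theta(\M^1)$. Since $\Theta$ is holomorphic near $\M^1$, the chain rule gives $D_\C\Theta \cdot D(H^1) = D(H^2)$ pointwise, where $D_\C\Theta$ is a map into $\operatorname{GL}_\C(3k)$. Thus $\widehat{H^2}$ and $\widehat{H^1}$ differ by right-... wait, by left-multiplication by a loop of matrices $g\colon S^1\times S^{2k-1}\to\operatorname{GL}_\C(3k)$, namely $g=(D_\C\Theta)\circ H^1$. The difference class $[\widehat{H^2}]-[\widehat{H^1}]$ in the abelian group $[S^1\times S^{2k-1},W_{2k+1,3k}]$ therefore depends only on the homotopy class of $g$ acting on the basepoint frame, i.e., on an element of $[S^1\times S^{2k-1}, \operatorname{GL}_\C(3k)\cdot A_0]$ for a fixed $A_0\in W_{2k+1,3k}$; since $\operatorname{GL}_\C(3k)$ acts transitively on $W_{2k+1,3k}$, this amounts to understanding how the loop $g$ (or rather its homotopy class in $[S^1\times S^{2k-1},\operatorname{GL}_\C(3k)]\cong[S^1\times S^{2k-1}, U(3k)]$) moves the class under the $U(3k)$-action on $W_{2k+1,3k}$. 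Because the slice classes already agree ($[(\widehat{H^1})_{\sli}]=[(\widehat{H^2})_{\sli}]=+1$), the difference lies in the kernel of the projection to the first factor, i.e., in $\pi_{2k}(W_{2k+1,3k})=\Z_2$, and I need to show it is the nontrivial element.

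To see it is nontrivial, I would reduce to the quotient $S^{2k}$ exactly as in Lemma~\ref{lem_tori}: pull $\widehat{H^j}$ back along the collapse map $g\colon S^{2k}\to S^1\times_b S^{2k-1}\to S^1\times S^{2k-1}$ (abusing notation with the earlier $g$) to get maps $S^{2k}\to W_{2k+1,3k}$, and observe that the pullback of $\widehat{H^1}$ should be nullhomotopic while the pullback of $\widehat{H^2}$ should represent the generator $\beta$ of $\pi_{2k}(W_{2k+1,3k})\cong\Z_2$. The point is that along the circle $\gamma$, the $\M^2$ normal form has the extra factor $x_2+ix_{2k}=e^{\pm i\theta}$ (on $\gamma$, the point is $(\cos\theta,\ldots,\sin\theta)$), which twists the quadratic frame vector $z_{2k+1}$ by a full rotation $e^{i\theta}$ as $\theta$ goes around $S^1$ — precisely the kind of twist that, after passing to $S^{2k}$ via the Hopf-type construction, produces the generator $\mathfrak h_k$ of $\pi_{2k}(S^{2k-1})$ and hence $\beta=i_{2k}\circ\mathfrak h_k$ in $W_{2k+1,3k}$. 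For $\M^1$ there is no such twist, so its pullback is nullhomotopic. The honest verification here is a frame computation: write down $D(H^j)$ explicitly along and near $\gamma$, Gram--Schmidt it, track the $e^{i\theta}$ dependence, and match the resulting map $S^{2k}\to W_{2k+1,3k}$ with $\psi=i_{2k}\circ\mathfrak h_k$ up to the homotopy $H$ constructed in Lemma~\ref{lem_gentors}(C).

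\textbf{Main obstacle.} I expect the main difficulty to be the explicit frame computation near the CR-singular curve: one must compute the Jacobian of the graphing parametrization $H^j$ at points of $S^1\times D^{2k}$ approaching $S^1\times\{0\}$, carry out the Gram--Schmidt orthogonalization (which is where the normalizing square roots appearing in \eqref{eq_gen} and in the homotopy of Lemma~\ref{lem_gentors}(C) will show up), and verify that the twist by $e^{i\theta}$ is genuinely detected in $\pi_{2k}$ rather than being unwound by a homotopy within $V_{2k+1,3k}$. Organizing this so that the comparison with the already-established generators $\alpha$ and $\beta$ is transparent — rather than a brute-force matrix slog — is the crux; the reduction to $S^{2k}$ via the pinched torus (borrowed wholesale from the proof of Lemma~\ref{lem_tori}) is what makes it tractable.
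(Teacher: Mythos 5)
Your outline matches the paper's strategy at every structural point: compute the slice class to get the index, use the splitting of Lemma~\ref{lem_homclass} so that the even case is free, and for odd $k$ detect the difference in $\pi_{2k}(W_{2k+1,3k})\cong\Z_2$ via the Hopf/pinched-torus mechanism. But the entire content of the paper's proof is precisely the step you set aside as the ``main obstacle'': it writes $DH^1$ in block form $(E|F)$ and performs an explicit chain of homotopies in $V_{2k+1,3k}$ --- deleting the block $D$ that does not affect the rank, straightening the block $B'$ using $\pi_1(\operatorname{SU}(2k-1))=0$, reducing the remaining $E'$ block inside $V_{2,k+1}$, and a final move by a constant matrix in $\operatorname{GL}_\C(3k)$ --- to conclude $[\widehat H^1]=[f_1]$, and by the same computation with the extra factor $x_2+ix_{2k}$ that $[\widehat H^2]=[f_2]$; the index statement and parts $(i)$, $(ii)$ then follow at once from Lemmas~\ref{lem_gentors}, \ref{lem_homclass} and \ref{lem_tori}. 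So what you call the ``honest verification'' is not a routine check to be appended; it is the proof, and your proposal does not carry it out.

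Two specific soft spots in the route you sketch before falling back on that computation. First, the detour through $\Theta$ buys nothing: writing $\widehat H^2 = g\cdot \widehat H^1$ with $g=(D_\C\Theta)\circ H^1:S^1\times S^{2k-1}\to\operatorname{GL}_\C(3k)$, the family $g$ is \emph{not} nullhomotopic (it winds once in $\theta$), so --- unlike in Lemma~\ref{lem_index}, where the corresponding matrix family extends over $D^{2k}$ and is therefore contractible --- the effect of left multiplication by $[g]$ on the $\pi_{2k}$-component of the class is exactly the quantity to be computed; the reformulation is as hard as the direct comparison and cannot by itself certify that the difference is the nontrivial element of $\Z_2$. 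Second, the pinched-torus reduction of Lemma~\ref{lem_tori} does not apply verbatim to $\widehat H^1,\widehat H^2$: these maps are not constant along $S^1\times\{b\}$ (the Jacobians carry $\cos\theta$, $\sin\theta$ entries at every point of that circle), so they do not descend to $S^1\times_b S^{2k-1}$ until after they have been homotoped to a standard form --- which is again the block-homotopy computation. The paper sidesteps this by proving the torsion statement once and for all for the model maps $f_1,f_2$ (Lemma~\ref{lem_tori}), and then reducing each $\widehat H^j$ to $f_j$; your plan needs that same normalization before the $S^{2k}$ detection can even be set up.
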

\begin{proof} As before, since $W_{2k+1,3k}$ is a deformation retract of $V_{2k+1,3k}$, we ignore the effect of $[\cdot]^{\operatorname{GS}}$ in the definitions of $\widehat H^j$ and $\widehat H^j_{\sli}$, $j=1,2$. Now, due to Lemmas~\ref{lem_homclass} and ~\ref{lem_tori}, it suffices to show that $[\widehat H^1]=[f_2]$ and $[\widehat H^2]=[f_2]$ in $[S^1\times S^{2k-1},W_{2k+1,3k}]$, where $f_1$ and $f_2$ are the maps defined in Lemma~\ref{lem_tori}. We describe a homotopy between $\widehat H^1$ and $f_1$ in some detail, and note that an almost identical procedure gives a homotopy between $\widehat H^2$ and $f_2$. 

Recall that we use the coordinates $\xi=(\theta,z=x+iy,t,\zt_1=u_1+iv_1,...,\zt_{k-2}=u_{k-2}+iv_{k-2},s)$ on $S^1\times S^{2k-1}$. Upto a cyclical permutation of the columns (which preserves the homotopy classes of $V_{2k+1,3k}$), we may write $\widehat H^1(\xi)=DH^1(\xi)$ explicitly as follows.
\beas
	&&\left(\begin{array}{c | c}
		 E_{3k\times 2} & F_{3k\times 2k-1}
	\end{array}\right) =
			\left(\begin{array}{c|c}
				\begin{array}{c}
				A_{2k\times 2} \\ 
				\hline C_{k\times 2} 
				\end{array}
			&
				\begin{array}{c}
				B_{2k\times 2k-1}\\
				\hline D_{k\times 2k-1}
				\end{array}
			\end{array}\right)\\
	&=&
		\frac{1}{4}\left(\begin{array}{c|c}
		\begin{array}{c c}
		2 & -2i\\
		0 & 0\\
		0 & 0\\
		0 & 0 \\
		\vdots & \vdots \\
		0 & 0\\
		0& 0\\
		\hline
		2z & i2z\\
		2x+t+is & 2y+i(t+is)\\
		\zt_1 &  i\zt_1\\
		\vdots & \vdots \\
		\zt_{k-2} &  i\zt_{k-2}
		\end{array} 
						& \begin{array}{c c c c c c}
							0 & 0 & 0 & \cdots & 0 & 0\\
	2\cos\theta & 0 & 0 & \cdots & 0 &-(4+2t)\sin\theta\\
							0 & 2 & 0 & \cdots & 0 & 0\\
							0 & 0 & 2 & \cdots & 0 & 0\\
							\vdots & \vdots & \vdots &  \ddots & \vdots & \vdots\\
							0 & 0 & 0 & \cdots & 2 & 0\\
	2\sin\theta & 0 &0 & \cdots & 0 & (4+2t)\cos\theta\\
				\hline
							0 & 0 & 0 &\cdots & 0 &  0\\
							z & iz & 0 &\cdots & 0 & 0\\
							0 & 0 & z & 0 & 0 & 0\\
							\vdots & \vdots & \vdots &  \ddots & \vdots & \vdots\\
							0 & 0 & 0 & \cdots & iz & 0
							\end{array}
			\end{array}\right).
\eeas
For any matrix $M_{p\times q}$ above, $\spa_\C[M]$ will denote the complex span of its $q$ columns in $\C^p$. Note that if $F_t$ is a homotopy of $F$ through $(3k)\times(2k-1)$ matrices of full rank such that $\spa_\C[F_t]\oplus\spa_\C[E]=\C^{3k}$, then $(E|F_t)$ is a homotopy of $(E|F)$ in $V_{2k+1,3k}$. With this principle in mind, we will perform the homotopy in multiple steps. First, we get rid of the factor $\frac{1}{4}$ via an elementary homotopy. 
\begin{itemize}
	\item [$(a)$] We note that the matrix $D$ has no impact on the rank of $(E|F)$. Thus, the following homotopy takes place within $V_{2k+1,3k}$.
	\beas
		(\tau,\xi)\mapsto 
		\left(\begin{array}{c|c}
				\begin{array}{c}
				A \\ 
				\hline C
				\end{array}
			&
				\begin{array}{c}
				B \\
				\hline (1-\tau) D
				\end{array}
			\end{array}\right),\quad \tau\in[0,1].
	\eeas
Thus, we assume here onwards that $D=\boldsymbol 0$. 
\item [$(b)$] With $D=\boldsymbol 0$, $\spa[F]=\spa\{z_2,...,z_{2k}\}\cong \C^{2k-1}$ is orthogonal to $\spa[E]$. Thus, if $B'\in\gl_\C(2k-1)$ denotes the matrix obtained by deleting the first row of $B$, any homotopy $B_t'$ of $B'$ in $\gl_\C(2k-1)$ induces a homotopy of $(E|F)$ in $V_{2k+1,3k}$. Now note that $B'$ only depends on $(t,\theta)\in[-1,1]\times S^1$, and $\frac{2^{-2k}}{4+2t}B'(\xi)$ is an element in $\operatorname{SU}(2k-1)$. Thus, $\xi\mapsto \frac{2^{-2k}}{4+2t} B'(\xi)$ induces an element in $\pi_1(\operatorname{SU}(2k-1))\cong 0$. So, there is a homotopy between $\xi\mapsto B'(\xi)$ and $\xi\mapsto I$ in $\gl_\C(2k-1)$. We may now assume that $B=\left(\begin{smallmatrix} \boldsymbol{0}_{1,2k-1} \\ \hline \\  \text{\bf I}_{2k-1,2k-1} \end{smallmatrix}\right)$.
\item [$(c)$] Since $\spa[E]=\spa\{z_1,z_{2k+1},...,z_{3k}\}\cong\C^k$ is orthogonal to $\spa[F]$, it suffices to produce an appropriate homotopy in $V_{2,k+1}$ of
	\beas
		E'=
				\left(\begin{array}{c c}
		2 & -2i\\
		2z & i2z\\
		2x+t+is & 2y+i(t+is)\\
		\zt_1 & i\zt_1\\
		\vdots & \vdots \\
		\zt_{k-2} & i\zt_{k-2}
		\end{array}\right),	
	\eeas
which, after an elementary homotopy in $V_{2,k+1}$, becomes 
	\beas
				\left(\begin{array}{c c}
		0 & -i\\
		z & iz\\
		t+is & i(t+is)\\
		\zt_1 & i\zt_1\\
		\vdots & \vdots \\
		\zt_{k-2} & i\zt_{k-2}
		\end{array}\right)=:(E_1|E_2).	
	\eeas
Now, the homotopy $E_\tau'=(E_1|e^{i\tau\frac{\pi}{2}} E_2+i\tau E_1)$, $\tau\in[0,1]$, gives that $\widehat H^1$ is homotopic to 
	\bes
		\xi \mapsto \wt i_{2k}(z,t,\zt,s)=
			\left(\begin{array}{c | c}	
		\mathbf{0}_{2k,1}  & \mathbf{I}_{2k,2k} \\
	\hline
  	\begin{array}{c}
		z \\ t+is\\ \zt^T
	\end{array}
					& \mathbf{0}_{k,2k}
 \end{array}\right).
	\ees
\item [$(d)$]  Now, recall that $f_1(\xi)=i_{2k}(z,y,\zt,s)$, where $i_{2k}\equiv  E\cdot \wt i_{2k}$ and $E$ is an elementary matrix in $GL_\C(3k)$ whose action on $V_{2k+1,3k}$ is to swap the lower block of $k$ rows with the upper block of $2k$ rows. As $\operatorname{GL}_\C(3k)$ is connected, we may homotope $\wt i_{2k}=E^{-1}\cdot i_{2k}$ to $i_{2k}=\text{\bf I}\cdot i_{2k}$. Hence, our claim.  
\end{itemize}
As noted earlier, a similar homotopy can be produced between $\widehat H^2$ and $f_2$. 
\end{proof}

\begin{cor}\label{cor_indexBC}
Suppose $\N$ is a tube enclosing a CR-singular curve $\gamma$ in $\C^{3k}$ such that for some $p\in\gamma$, $\N$ at $p$ is locally biholomorphic to the Beloshapka--Coffman normal form $\B^{2k+1,3k}$ at $0$ (see \eqref{eq_BCform}). Then there exists an admissible parametrization $F$ of $\N$ such that $\operatorname{ind}_F\N=1$. 
\end{cor}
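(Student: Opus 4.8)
The plan is to transport the parametrization $H^1$ of the model $\M^1$ --- for which $\operatorname{ind}_{H^1}(\M^1)=1$ by Theorem~\ref{thm_inequivmodels} --- to $\N$ across a local biholomorphism, and then read off $\operatorname{ind}_F(\N)$ using the invariance properties of the index established in Lemma~\ref{lem_index}. The point is that $\N$ near $p$ is biholomorphic, through the Beloshapka--Coffman normal form $\B^{2k+1,3k}$, to $\M^1$ near one of its CR-singular points, and the index is sensitive only to an arbitrarily small neighborhood of a CR-singular point.

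First I would check that $\M^1$ is locally biholomorphic to $\B^{2k+1,3k}$ at every point of its CR-singular curve $\gamma$. Fix $q=H^1(1,0)\in\gamma$; in the coordinates of \eqref{eq_modelsing} this point has $z_2$-coordinate $\cos 1\neq 0$. The holomorphic change of coordinates fixing every $z_j$ except for $z_2\mapsto\sqrt{z_2^2+z_{2k}^2}-1$ (the branch equal to $1$ at $q$), followed by a real translation in $z_{2k}$, has nonsingular Jacobian at $q$ and carries the germ of $\M^1$ at $q$ onto the germ of $\B^{2k+1,3k}$ at $0$; this is nothing but the ``unwrapping'' of the CR-singular line of $\B^{2k+1,3k}$ into the circle $\gamma$ built into the definition \eqref{eq_modelsing} of $\M^1$. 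Composing this with a biholomorphism realizing the hypothesis at $p$, I obtain a biholomorphism $R$ of a neighborhood of $q$ onto a neighborhood of $p$ in $\C^{3k}$, with $R(q)=p$, carrying the germ of $(\M^1,\gamma)$ at $q$ onto the germ of $(\N,\gamma)$ at $p$.

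Next I would transport the parametrization. For a small $\eps>0$ and a short arc $A\subset S^1$ about $1$ with $H^1(A\times\eps D^{2k})$ inside the domain of $R$, put $F_{\mathrm{loc}}(\theta,\eta)=R\big(H^1(\theta,\eps\eta)\big)$ on $A\times D^{2k}$; since a biholomorphism preserves total reality and CR-dimension, $F_{\mathrm{loc}}$ is a diffeomorphism onto a neighborhood of $p$ in $\N$, totally real off $A\times\{0\}$, of CR-dimension $1$ along $A\times\{0\}$, with $F_{\mathrm{loc}}(1,0)=p$. By a standard relative isotopy/extension argument (uniqueness of tubular-neighborhood parametrizations), $F_{\mathrm{loc}}$ extends, after shrinking $A$, to an admissible parametrization $F$ of $\N$ agreeing with $F_{\mathrm{loc}}$ near $\{1\}\times D^{2k}$; in particular $\widehat F_{\sli}=\widehat{(F_{\mathrm{loc}})}_{\sli}$. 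The chain rule gives $DF_{\mathrm{loc}}(1,\eta)=D_\C R\big(H^1(1,\eps\eta)\big)\cdot DH^1(1,\eps\eta)\cdot E$ for a fixed positive diagonal matrix $E$, which Gram--Schmidt ignores; the matrix-valued map $\eta\mapsto D_\C R(H^1(1,\eps\eta))$ extends over the contractible disk $D^{2k}$ and is therefore homotopic, inside $\operatorname{GL}_\C(3k)$, to the constant $\mathbf I$, so (exactly as in the proof of Lemma~\ref{lem_index}) left multiplication by it does not change homotopy classes in $V_{2k+1,3k}$; and the maps $\eta\mapsto[DH^1(1,r\eta)]^{\operatorname{GS}}$, $r\in(0,1]$, are mutually homotopic in $W_{2k+1,3k}$, being valued among totally real frames. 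Chaining these homotopies yields $\deg(\widehat F_{\sli})=\deg(\widehat{H^1}_{\sli})=\operatorname{ind}_{H^1}(\M^1)=1$, i.e., $\operatorname{ind}_F(\N)=1$. (Should an orientation sign slip in anywhere, it is removed by replacing $F$ with $F\circ\varphi$, $\varphi(\theta,\eta)=(\theta,\eta^{*})$, which flips the index by Remark~\ref{rem_index}.)

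I expect the main obstacle to be the extension of the locally transported $F_{\mathrm{loc}}$ to a global admissible parametrization of $\N$ that is unchanged near the core $\gamma$: this rests on the relative uniqueness of tubular neighborhoods, and the fact that such an extension can alter $\operatorname{ind}_F(\N)$ only through the fiberwise orientation is precisely why passing to an arbitrarily thin tube near $p$ (the factor $\eps$) is what makes the transport argument go through cleanly.
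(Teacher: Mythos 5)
Your core strategy coincides with the paper's: reduce to the model computation of Theorem~\ref{thm_inequivmodels} via the biholomorphic invariance of the slice degree from Lemma~\ref{lem_index}, use the ``unwrapping'' $z_2\mapsto\sqrt{z_2^2+z_{2k}^2}-1$ to identify the germ of $\M^1$ along its CR-singular circle with the germ of $\B^{2k+1,3k}$ at $0$ (your map is the inverse of the map $h$ in the paper's proof), and absorb the residual sign by Remark~\ref{rem_index}. The problem lies in the direction you transport and in the extension step on which your argument pivots: an admissible parametrization is a diffeomorphism of $S^1\times D^{2k}$ onto the compact manifold-with-boundary $\N$, so it must send $\{1\}\times S^{2k-1}$ into $\bdy\N$, whereas $F_{\mathrm{loc}}(\{1\}\times S^{2k-1})=R\bigl(H^1(\{1\}\times\eps S^{2k-1})\bigr)$ is a small sphere in the relative interior of $\N$. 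Hence no admissible $F$ can ``agree with $F_{\mathrm{loc}}$ near $\{1\}\times D^{2k}$,'' and the asserted identity $\widehat F_{\sli}=\widehat{(F_{\mathrm{loc}})}_{\sli}$ cannot hold as stated. The intended fix is to demand agreement only near the core (on $A\times\delta D^{2k}$ for small $\delta$) and then use the radial homotopy you already invoke (slice frames at radii $r\in(0,1]$ are totally real, hence of full complex rank) to equate degrees; but even in that corrected form the extension of a core-neighborhood parametrization over an arc to a global admissible parametrization of $\N$ is exactly the nontrivial relative tubular-neighborhood/isotopy-extension argument you leave unproved, and it is the real content of your route.

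The paper's proof shows how to avoid this entirely by transporting in the opposite direction: take an arbitrary admissible parametrization $F$ of $\N$ (one exists by Definition~\ref{def_tubes}), push it into $\B^{2k+1,3k}$ near $0$ through the hypothesized biholomorphism using the second half of Lemma~\ref{lem_index}, and note, by the same reasoning as in the first half of Lemma~\ref{lem_index} applied locally, that any two local parametrizations of $\B^{2k+1,3k}$ near $0$ carrying the core curve to the $x_{2k}$-axis have slice degrees differing at most by a sign; since the explicit one $h\circ H^1$ has degree $1$, this yields $\operatorname{ind}_F(\N)=\pm1$, and Remark~\ref{rem_index} finishes. If you reorganize your argument accordingly — compare $F_{\mathrm{loc}}$ locally with an arbitrary admissible parametrization of $\N$ rather than extending $F_{\mathrm{loc}}$ globally — no extension lemma is needed. (A harmless slip: with the paper's convention, $1\in S^1$ means $\theta=0$, so $q=H^1(1,0)$ has $x_2$-coordinate $1$ rather than $\cos 1$; all you need is that it is nonzero.)
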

\begin{proof} By the local biholomorphic invariance of the index (see Lemma~\ref{lem_index}), it suffices to show that the index of $\B^{2k+1,3k}$ at $0$ is $\pm 1$ in the following sense. For $\eps<2\pi$, if $H:\{e^{i\theta}:|\theta|<\eps\}\times D^{2k}\rightarrow \C^{3k}$ is a parametrization of some neighborhood $U\subset\B^{2k+1,3k}$ of $0$, which maps $(1,0)$ to $0$ and the curve $\{e^{i\theta}:|\theta|<\eps\}\times\{0\}$ onto $U\cap (x_{2k}$-axis$)$, then $\deg(\widehat{H}_{1\operatorname{-}\sli})=\pm 1$. Since the induced degree of any two such parametrizations will only differ by a sign (again, see Lemma~\ref{lem_index}) it suffices to produce some $H$ for which $\deg(\widehat{H}_{1\operatorname{-}\sli})=1$.  

This is essentially done in Theorem~\ref{thm_inequivmodels}, since 
	\bes
		\M_1\cap\{x_{2k}=0\}
		=\B^{2k+1,3k}\cap\left\{x_{2k}=0,
		 ||(z_1,x_2,...,x_{2k-1})||<\tfrac{1}{2}\right\}
				 \quad \text{up to translation.}
	\ees
To be more precise, we note that the map $h=(h_1,...,h_{3k})$ given by
	\bes
		h_j(z_1,...,z_{3k})=
		\begin{cases}
			z_j,\ \text{if}\ j\neq 2, 2k,\\
			\sqrt{z_2^2+z_{2k}^2}-1,\ \text{if}\ j=2,\\
			\arctan\left(\frac{x_{2k}}{x_2}\right),\ \text{if}\ j=2k,
		\end{cases}
	\ees
is a local biholomorphism near $p=(0,1,0,...,0)\in \M_1$ which maps a neighborhood of $p$ in $\M_1$ to a neighborhood of $0$ in $\B^{2k+1,2k}$. The local biholomorphic invariance of the index implies that $H=h\circ H^1$, where $H^1$ is as in Theorem~\ref{thm_inequivmodels}, is the desired parametrization of $\B^{2k+1,3k}$ near $0$. 
\end{proof}

We now indicate how $\M_1$ and $\M_2$ can be modified to produce polynomially convex models of all indices. For any nonnegative index $n$, one can set $z_{2k+1}$ as $\zbar_1^{n+1}$ and $\zbar_1^{n+1}(x_2+ix_{2k})$ in the definitions of $\M^1$ and $\M^2$, respectively, and modify $H^1$ and $H^2$ in the obvious way. As in Theorem~\ref{thm_inequivmodels}, these two models will represent the same and only TRC cobordism class of index $n$ when $k$ is even, and the only two distinct TRC cobordism classes of index $n$ when $k$ is odd. For tubes of negative indices, we note that if $(\N,F)$ is a parametrized tube of index $n$, $n\in\mathbb N$, then $(\N,F^*)$ is a parametrized tube of index $-n$, where $F^*:(\theta,\eta)\mapsto(\theta,\eta^*)$, $(\theta,\eta)\in S^1\times D^{2k}$ (see Remark~\ref{rem_index}). 

\subsection{A fully-TR tube containing an Alexander set}\label{subsec_Alexander} At the end of Subsection~\ref{subsec_models}, we construct polynomially convex parametrized tubes of index zero. These are in fact fully-TR tubes. In the proof of Theorem~\ref{thm_anstructure}, we will use a fully-TR tube with an extremely different complex-analytic behavior from the ones mentioned above. This construction is based on the following result due to Alexander (\cite{Al98}) which has been heavily used in the context of hulls without analytic structure (see \cite{IzSt18} and \cite{ArWo17}). {\em The standard torus $\mathbb{T}^2=\{(e^{i\theta},e^{i\psi}):\theta,\psi\in\rl\}$ in $\CC$ contains a compact subset $E$ such that $\widehat E\setminus E$ is nonempty but contains no analytic disk. Such a set can be chosen in any neighbourhood of the diagonal of $\mathbb{T}^2$.}

Now, for some $\eps<<1$, let $F=(f_1,...,f_{3k}):S^1\times \eps D^{2k}\rightarrow \C^{3k}$ be the smooth map given by
	\beas
		&(f_1,...,f_{2k+1}):(\theta,x,y,t,u_1,v_1,...,u_{2k},v_{2k},s)
			\mapsto\left(e^{i\theta},e^{i(\theta+s)},x, y,t,u_1,v_1,...,u_{2k},v_{2k}\right),&\\
		&f_{j}\equiv 0, 2k+2\leq j\leq 3k.& 
	\eeas

\begin{lemma}\label{lem_Alexander}
Let $\A:=F(S^1\times \eps D^{2k})$ and $G:(\theta,\eta)\mapsto F(\theta,\eps\eta)$, $(\theta,\eta)\in S^1\times D^{2k}$. Then, $\A$ is a fully-TR tube in $\C^{3k}$
and $[\widehat G]$ is trivial in $[S^1\times S^{2k-1},W_{2k+1,3k}]$. Moreover, $\A$ contains a compact set $E$ such that such that $\widehat E\setminus E$ is nonempty but contains no analytic disk. 
\end{lemma}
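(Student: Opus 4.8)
The statement has three parts: that $\A$ is a fully-TR tube, that $[\widehat G]$ is trivial, and that $\A$ contains a copy of Alexander's set $E$ with a nontrivial hull but no analytic disks. I would dispatch the first two by direct computation and the third by reducing to Alexander's theorem via a polynomial projection argument.

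\emph{Step 1: $\A$ is a fully-TR tube.} Here I would simply compute the Jacobian of $F$ at a point $(\theta,x,y,t,u_1,v_1,\dots,u_{2k},v_{2k},s)\in S^1\times\eps D^{2k}$ and check that its $(3k)$ columns span a totally real $(2k+1)$-dimensional real subspace of $\C^{3k}\cong\rl^{6k}$, for \emph{all} points in the domain, not just off the core circle. The coordinates $x,y,t,u_j,v_j$ each contribute a real coordinate direction in one of the ``trivial'' slots $f_3,\dots,f_{2k+1}$, so those $2k-1+\cdots$ (precisely $2k$) directions are manifestly mutually orthogonal and totally real. The only nontrivial directions are $\partial/\partial\theta$ and $\partial/\partial s$, which map to $(ie^{i\theta},ie^{i(\theta+s)},0,\dots)$ and $(0,ie^{i(\theta+s)},0,\dots)$ respectively; together with the trivial block these form a frame, and one checks the $\C$-linear span of the whole frame has complex dimension strictly greater than $2k+1$ (indeed equals $2k+1$ only if totally real) — the key being that $ie^{i\theta}$ and $ie^{i(\theta+s)}$ are $\C$-independent for generic $s$, but since we also have the real slot directions the image plane contains no complex line. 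Actually the cleanest check: the map $(\theta,s)\mapsto(e^{i\theta},e^{i(\theta+s)})$ parametrizes a totally real torus in $\CC$ (it is an immersion onto $\mathbb T^2$, which is totally real), and adjoining the standard totally real coordinate block $\rl^{2k}\subset\C^{2k}$ in the remaining slots keeps the product totally real; so $\A$ is totally real everywhere, hence trivially a fully-TR tube in the sense of Definition~\ref{def_tubes}. I should also confirm $F$ restricted to $S^1\times\eps D^{2k}$ is an embedding (injectivity is clear from the $\theta$ and $(x,y,t,u_j,v_j)$ coordinates, and it's an immersion by the Jacobian computation), so $\A$ is a genuine submanifold with boundary.

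\emph{Step 2: $[\widehat G]$ is trivial.} The frame map $\widehat G:S^1\times S^{2k-1}\to W_{2k+1,3k}$ is, up to Gram–Schmidt, $\xi\mapsto DG(\xi)$. Since $G(\theta,\eta)=F(\theta,\eps\eta)$ and $F$ is \emph{affine} in the disk variables (the last $2k$ real coordinates appear linearly and the first two components depend only on $\theta,s$), the derivative $DG$ depends only on $\theta$ (and $s$, which is one of the sphere coordinates), not on the radial position — in particular the homotopy class of $\widehat G_{\sli}$ is computed by a map $S^{2k-1}\to W_{2k+1,3k}$ that extends over $D^{2k}$ (shrink $\eps\to 0$; or directly: the frame is constant in all directions except $\theta$ and $s$, and $s$ ranges over the sphere only, so the slice map factors through the contractible disk's worth of the trivial block). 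Concretely I would exhibit an explicit null-homotopy: deform $DG$ first to kill the $\theta$-dependence in the $f_2$ slot (rotating $e^{i(\theta+s)}$ back to $e^{is}$, legitimate since this is multiplication by $e^{-i\theta}\in\gl_\C(1)$ acting on one coordinate, and $S^1\to\gl_\C(1)$ is... not nullhomotopic — so I must be more careful and instead absorb the $\theta$ into a $\gl_\C(3k)$-action on the whole frame from the left, which \emph{is} nullhomotopic since $\gl_\C(3k)$ is connected, exactly as in the proof of Lemma~\ref{lem_index}). After removing the $\theta$-dependence entirely, $\widehat G$ becomes independent of the $S^1$ factor, hence factors through $S^{2k-1}$; and the resulting slice map, being $DF$ of an affine-in-the-disk map, extends over $D^{2k}$, so it is nullhomotopic in $W_{2k+1,3k}$. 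Therefore $[\widehat G]=0$ in $[S^1\times S^{2k-1},W_{2k+1,3k}]$.

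\emph{Step 3: the Alexander set.} This is where the content lies, though it is short given Alexander's theorem. Consider the projection $\pi:\C^{3k}\to\CC$ onto the first two coordinates $(z_1,z_2)$. On $\A$ we have $f_1=e^{i\theta}$, $f_2=e^{i(\theta+s)}$, so $\pi(\A)=\{(e^{i\theta},e^{i(\theta+s)}):\theta\in\rl,|s|\le\eps\}$, which is an $\eps$-neighborhood in the standard torus $\mathbb T^2$ of the diagonal $\{(e^{i\theta},e^{i\theta})\}$. By Alexander's theorem (quoted before Lemma~\ref{lem_Alexander}), this neighborhood contains a compact set $E_0\subset\mathbb T^2$ with $\widehat{E_0}\setminus E_0\neq\emptyset$ and containing no analytic disk. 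Now take $E=\pi^{-1}(E_0)\cap\{f_3=\cdots=f_{2k+1}=0\}\cap\A$, i.e. the image under $F$ of $\{(\theta,s): (e^{i\theta},e^{i(\theta+s)})\in E_0\}\times\{0\}$. Because $F$ is proper and $\pi\circ F$ is essentially the torus parametrization in the first block with all other coordinates zero, $E$ is a compact subset of $\A$ biholomorphically (indeed via the linear coordinate projection $\pi$) identified with $E_0$ sitting inside $\CC\times\{0\}\subset\C^{3k}$. Since polynomial hulls are preserved under this situation — $E$ lies in the linear subspace $\CC\times\{0\}$ and its hull in $\C^{3k}$ equals $\widehat{E_0}\times\{0\}$ (a compact subset of a coordinate subspace has the same hull computed in the subspace or the ambient space, because polynomials in the extra variables separate points off the subspace) — we get $\widehat E\setminus E=(\widehat{E_0}\setminus E_0)\times\{0\}\neq\emptyset$, and any analytic disk in $\widehat E$ would project to an analytic disk in $\widehat{E_0}$, forcing it to be constant. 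Hence $\widehat E\setminus E$ is nonempty and contains no analytic disk.

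\emph{Main obstacle.} The genuinely delicate point is Step 2, specifically the temptation to ``rotate away'' the $e^{i\theta}$ factor coordinate-by-coordinate, which would \emph{not} be a legitimate homotopy in the Stiefel manifold because $\pi_1(\gl_\C(1))=\Z$. The correct move is to realize the $\theta$-dependence as left multiplication by a loop in $\gl_\C(3k)$ (which is connected, hence the loop is nullhomotopic there, and the Stiefel manifold is closed under that action), exactly mirroring the argument in Lemma~\ref{lem_index}; after that, triviality follows from the affine (hence extendable-over-the-disk) structure of $F$ in the remaining variables. The other steps are routine: Step 1 is a Jacobian computation plus the observation that a product of a totally real torus with a totally real linear subspace is totally real, and Step 3 is a direct application of Alexander's theorem together with the standard fact that hulls of compacts in a coordinate subspace are computed within that subspace.
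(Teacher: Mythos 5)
Your Steps 1 and 3 are correct and essentially coincide with the paper's proof (the totally real check via the torus $\mathbb T^2$ times a real coordinate block, and the identification of $E$ with Alexander's set sitting in the coordinate subspace $\CC\times\{0\}^{3k-2}$, whose hull is the same computed in the subspace or in $\C^{3k}$). The problem is Step 2. You justify removing the $\theta$-dependence of the frame by left multiplication by the loop $\theta\mapsto A_\theta=\operatorname{diag}(e^{-i\theta},e^{-i\theta},1,\dots,1)$ in $\gl_\C(3k)$, asserting that this loop is nullhomotopic ``since $\gl_\C(3k)$ is connected.'' Connectedness only lets you contract a $\gl_\C(3k)$-valued map defined on a contractible parameter space (this is what happens in Lemma~\ref{lem_index}, where the relevant map is defined on $D^{2k}$); contracting a loop requires simple connectedness, and $\pi_1(\gl_\C(3k))\cong\Z$ via the determinant. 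Your specific loop has $\det A_\theta=e^{-2i\theta}$, winding number $-2$, so it is \emph{not} nullhomotopic in $\gl_\C(3k)$, and the homotopy you invoke between $DG$ and $A_\cdot\, DG$ does not exist for the reason you give. (The two maps do happen to be homotopic, but establishing that already requires the kind of argument you are trying to bypass.) So, as written, the proof of the triviality of $[\widehat G]$ has a genuine gap.

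The repair is one line and is exactly the paper's argument — and you already have the key observation in hand: $DG$ (hence its Gram--Schmidt orthogonalization) depends only on $(\theta,s)$. Therefore $\widehat G$ factors through the projection $S^1\times S^{2k-1}\to S^1\times[-1,1]$, $(\theta,z,t,\zt,s)\mapsto(\theta,s)$, and $S^1\times[-1,1]$ deformation retracts onto $S^1$. Since $W_{2k+1,3k}$ is $2(k-1)$-connected and $k\geq 2$, we have $\pi_1(W_{2k+1,3k})=0$, so the factored map, and hence $\widehat G$ itself, is nullhomotopic in $[S^1\times S^{2k-1},W_{2k+1,3k}]$. With Step 2 replaced by this factorization argument, your proof is complete and matches the paper's.
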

\begin{proof} The first part of the claim follows from computing $DG$ and observing that it depends only on $(\theta,s)\in S^1\times [-1,1]$. Since $\pi_1(W_{2k+1,3k})$ is trivial, so is $[\widehat G]$.  

For the second part of the claim, note that $\A$ contains a neighborhood $V$ of the diagonal of the standard torus in the $z_1$-$z_2$ plane. In fact, $V=F(\{x=y=t=u_j=v_j=0\})$. We note that if $E\subset V$ is the set granted by Alexander's theorem above, then $E\times\{0\}^{3k-2}$ is a compact set in $\A$ with same hulls as $E$. Thus, in an abuse of notation, denoting $E\times\{0\}^{3k-2}$ by $E$ completes the proof. 
\end{proof}

\section{Proofs of the main results}\label{sec_proofs} We first isolate a lemma that is used in both the proofs in this section. The proof uses standard arguments; see \cite[Lemma~2.1]{GuSh20} for a version of this lemma. Here, for $p\in\Cn$ and $r>0$, $B_p(r)=\{z\in\Cn:||z-p||<r\}$ and $\overline B_p(r)$ denotes its closure in $\Cn$ 

\begin{lemma}\label{lem_unions} Suppose $X\subset\Cn$ is a polynomially convex compact set, and $p_1,...,p_\ell\in\Cn\setminus X$. Then, there exist $r_1,...,r_\ell>0$, so that $X\cup\bigcup_{j=1}^\ell Q_j$ is polynomially convex, for any choice of polynomially convex compact sets $Q_j\subseteq\bar B_{p_j}(r_j)$, $j=1,...,\ell$.
\end{lemma}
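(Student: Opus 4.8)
\textbf{Proof plan for Lemma~\ref{lem_unions}.}

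The plan is to reduce the statement, by induction on $\ell$, to the case of adjoining a single small polynomially convex set to a polynomially convex compact set sitting a positive distance away. So it suffices to prove: if $X$ is polynomially convex and $p \notin X$, then there is an $r > 0$ such that $X \cup Q$ is polynomially convex whenever $Q \subseteq \overline B_p(r)$ is polynomially convex and compact. (The inductive step then applies this with $X$ replaced by $X \cup Q_1 \cup \cdots \cup Q_{j-1}$, which is polynomially convex by the previous steps and avoids $p_j$ provided the radii $r_1,\dots,r_{j-1}$ were chosen small enough; one shrinks all radii finitely often to keep the points $p_j$ outside the growing unions, which is possible since there are only finitely many of them.)

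For the single-point reduction, I would use the standard separation idea. Since $p \notin X = \widehat X$, there is a holomorphic polynomial $P$ with $P(p) = 1$ and $\sup_X |P| < 1/2$, say. Choose $r_0 > 0$ small enough that $\overline B_p(r_0) \cap X = \emptyset$ and $|P| > 3/4$ on $\overline B_p(r_0)$ (possible by continuity of $P$ and since $|P(p)| = 1$). Now take any polynomially convex compact $Q \subseteq \overline B_p(r)$ with $r \le r_0$. To see $X \cup Q$ is polynomially convex, let $w \in \widehat{X \cup Q}$. Then $|P(w)| \le \max(\sup_X|P|, \sup_Q|P|)$. If $\sup_Q |P| \le 1/2$, then $|P(w)| \le 1/2$; but we also know, by a standard localization of the hull under the polynomial map $P$ (the hull of $X \cup Q$ maps into the polynomially convex hull of $P(X \cup Q) \subseteq \{|\zeta| \le \sup|P|\}$, and more precisely into $\widehat{P(X)} \cup \widehat{P(Q)}$ type estimates), that $w$ cannot lie over the region $|\zeta| > 3/4$, hence $w \notin \overline B_p(r_0)$; combined with a fiberwise argument this forces $w \in \widehat X = X$. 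If instead $\sup_Q|P|$ is large, one first replaces $r$ by something smaller: the real content is that by choosing $r$ small we can guarantee $\sup_Q |P - 1|$ is as small as we like, so $P$ separates $Q$ from $X$ cleanly and $P^{-1}$ of a suitable annulus disconnects $\widehat{X\cup Q}$. The cleanest way to organize this is via the classical fact (e.g.\ Kallin's lemma, or the localization theorem \cite[\S1.2]{St07}) that if $P$ is a polynomial, $U_1, U_2$ are disjoint open sets in $\mathbb C$ with $P(X) \subseteq U_1$, $P(Q) \subseteq U_2$, and $\widehat{X \cup Q} \subseteq P^{-1}(\widehat{U_1 \cup U_2})$ decomposes accordingly, then $\widehat{X \cup Q} = \widehat X \sqcup \widehat Q = X \sqcup Q$. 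Taking $U_1 = \{|\zeta - 0| \text{ small near } P(X)\}$-neighborhood and $U_2 = \{|\zeta - 1| < \epsilon\}$ with $\epsilon$ governing the choice of $r$ makes this precise.

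The main obstacle I anticipate is not conceptual but bookkeeping: making sure the single polynomial $P$ (depending only on $X$ and $p$) genuinely separates $X$ from \emph{every} admissible $Q \subseteq \overline B_p(r)$ once $r$ is small, and that the hull $\widehat{X \cup Q}$ really splits over the two regions rather than picking up spurious points over the ``bridge'' between $P(X)$ and $P(Q)$. This is handled by the localization/Kallin-type lemma, whose hypothesis $\widehat{U_1 \cup U_2} = \widehat{U_1} \sqcup \widehat{U_2}$ holds because $U_1 \cup U_2$ can be arranged to be polynomially convex in $\mathbb C$ (a union of two small disjoint disks, say), so its hull adds nothing. Once the two-point ($\ell = 2$) and hence single-point case is nailed down, the induction on $\ell$ is routine, and shrinking the finitely many radii to maintain $p_j \notin X \cup \bigcup_{i < j} Q_i$ introduces no difficulty.
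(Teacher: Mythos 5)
Your single-ball argument is the standard one and is essentially what the paper has in mind (the paper itself gives no proof, citing \cite[Lemma~2.1]{GuSh20} for ``standard arguments''): since $p\notin X=\widehat X$ there is a polynomial $P$ with $P(p)=1$ and $\sup_X|P|<1/2$, and after shrinking $r$ so that $\sup_{\overline B_p(r)}|P-1|$ is small, $P(X)$ and $P(Q)$ lie in two disjoint closed disks whose union is polynomially convex, so a Kallin-type separation lemma gives $\widehat{X\cup Q}\subseteq \widehat X\cup\widehat Q=X\cup Q$. (The intermediate detour via ``$|P|>3/4$ on $\overline B_p(r_0)$'' and a ``fiberwise argument'' is not by itself enough --- knowing only that $P(Q)$ avoids a disk does not make $P(X)\cup P(Q)$ have a disconnected hull --- but the version you settle on, with $\sup_Q|P-1|$ small, is correct.)

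The genuine gap is in the induction, and it is a quantifier problem: the lemma asks for radii $r_1,\dots,r_\ell$ fixed \emph{in advance} that work for \emph{every} admissible choice of $Q_1,\dots,Q_\ell$, whereas your inductive step produces $r_j$ by applying the single-ball case to the compact set $X\cup Q_1\cup\cdots\cup Q_{j-1}$, so $r_j$ (through the separating polynomial and the continuity estimate) depends on the particular $Q_1,\dots,Q_{j-1}$ already chosen. Your remark about shrinking radii only keeps the points $p_j$ off the growing union; it does not restore this uniformity. The repair stays entirely within your framework: run the induction on $\ell$ for the statement as quantified. Given radii $r_1,\dots,r_{\ell-1}$ from the inductive hypothesis (shrunk so the closed balls are pairwise disjoint, miss $X$, and miss $p_\ell$), apply the hypothesis with $Q_i=\overline B_{p_i}(r_i)$ to conclude that $Y_0:=X\cup\bigcup_{i<\ell}\overline B_{p_i}(r_i)$ is polynomially convex; choose one polynomial $P_\ell$ with $P_\ell(p_\ell)=1$ and $|P_\ell|<1/4$ on $Y_0$, and then $r_\ell$ with $|P_\ell-1|<1/4$ on $\overline B_{p_\ell}(r_\ell)$. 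Now for any admissible $Q_1,\dots,Q_\ell$, the set $Y:=X\cup Q_1\cup\cdots\cup Q_{\ell-1}$ is polynomially convex by the inductive hypothesis and satisfies $Y\subseteq Y_0$, so the bound $|P_\ell|<1/4$ on $Y$ holds independently of the $Q_i$, and your Kallin step applied to $Y$ and $Q_\ell$ finishes the proof. With that reorganization the argument is complete.
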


\subsection{Proof of Theorem~\ref{t.main}}\label{subsec_mainproof} Let $M$ be as given. In view of Lemma~\ref{lem_emb2} and Corollary~\ref{cor_indexBC}, we may assume that $M$ is embedded in $\C^{3k}$, its set of CR-singularities is a union of disjoint simple closed curves, $C_1$,..., $C_\ell$, and each $C_j$ is enclosed in a parametrized tube $(\N_j,F_j)$, where $\N_j\subset M$ and $\operatorname{ind}_{F_j}(\N_j)=1$. Using the results of Section~\ref{sec_tubes}, we now proceed to ``replace" the $\N_j$'s with polynomially convex tubes enclosing CR-singular curves. 

\begin{lemma}\label{lem_arrange}
There	 exist $\ell$ disjoint tubes, $\K_1$,...,$\K_\ell$,  enclosing CR-singular curves $\gamma_1$,...,$\gamma_\ell$ such that 
	\begin{itemize}
\item [$(i)$] each $\K_j$ is disjoint from $M$,
\item [$(ii)$]  for some parametrization $G_j$, $(\K_j,G_j)$ is TRC cobordant to $(\N_j,F_j)$, $j=1,...,\ell$,
\item [$(iii)$] $\Gamma=\gamma_1\cup\cdots \cup\gamma_\ell$ is polynomially convex in $\C^{3k}$, and 
\item [$(iv)$] $\K=\K_1\cup\cdots \cup \K_\ell$ is polynomially convex in $\C^{3k}$.  
\end{itemize}
\end{lemma}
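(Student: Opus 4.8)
The strategy is to construct the $\K_j$'s one at a time, using Theorem~\ref{thm_inequivmodels} to furnish an explicit polynomially convex model for each index-$1$ TRC cobordism class, and then to place translated copies of these models far apart and far from $M$ so that polynomial convexity of the union can be obtained from Lemma~\ref{lem_unions}.

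First I would fix the models. For each $j$, the tube $(\N_j, F_j)$ has index $1$, and by Lemma~\ref{lem_gentors}, Lemma~\ref{lem_homclass} and Lemma~\ref{lem_tori}, its frame map $\widehat{F_j}$ induces one of the (at most two, when $k$ is odd) elements of $[S^1\times S^{2k-1},W_{2k+1,3k}]$ whose slice class is $+1$; by Theorem~\ref{thm_inequivmodels} these are precisely the classes represented by $(\M^1,H^1)$ and $(\M^2,H^2)$. So for each $j$ pick $m_j\in\{1,2\}$ with $[\widehat{H^{m_j}}]=[\widehat{F_j}]$ in $[S^1\times S^{2k-1},W_{2k+1,3k}]$. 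By Lemma~\ref{lem_polcvx}, $\M^{m_j}$ and its CR-singular curve $\gamma$ are polynomially convex, and by Lemma~\ref{lem_TRcob}, $(\M^{m_j},H^{m_j})$ is TRC cobordant to $(\N_j,F_j)$. This gives a candidate for $\K_j$ (up to placement), with properties $(ii)$ and the single-piece versions of $(iii)$--$(iv)$ already in hand.

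Next I would handle disjointness and the union properties. Since $M$ is compact and polynomially convex (by Lemma~\ref{lem_emb2} together with the Arosio--Wold-type perturbation — actually we may as well invoke Lemma~\ref{lem_emb2}'s local polynomial convexity and the global hypothesis carried into this subsection, but to be safe one can first shrink attention to the fact that $M$ is compact), apply Lemma~\ref{lem_unions} with $X$ taken successively: start with $X = M$, choose a point $p_1 \in \C^{3k}\setminus M$, and let $r_1>0$ be the radius granted by the lemma. Place a scaled-and-translated copy $\K_1 = A_1(\M^{m_1})$, where $A_1: z\mapsto \rho_1 z + b_1$ with $b_1$ near $p_1$ and $\rho_1>0$ small enough that $\K_1 \subset \overline B_{p_1}(r_1)$ and $\K_1 \cap M = \emptyset$; by Lemma~\ref{lem_polcvx} and affine invariance of polynomial convexity, $\K_1$ and $\gamma_1 = A_1(\gamma)$ are polynomially convex, and the remark following Lemma~\ref{lem_TRcob} guarantees $(\K_1, G_1) := (A_1(\M^{m_1}), H^{m_1}\circ A_1^{-1})$ is still TRC cobordant to $(\N_1,F_1)$. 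Then $M \cup \K_1$ is polynomially convex; set $X = M\cup\K_1$, pick $p_2 \in \C^{3k}\setminus X$, and repeat. After $\ell$ steps we have disjoint $\K_1,\dots,\K_\ell$, disjoint from $M$, with $M \cup \K$ — and hence $\K$ — polynomially convex, giving $(i)$, $(ii)$, $(iv)$. For $(iii)$, note $\Gamma \subset \K$ is closed and $\widehat{\Gamma} \subset \widehat{\K} = \K$; but each $\gamma_j$ is totally real and polynomially convex, and one more application of Lemma~\ref{lem_unions} (with $X$ empty or a single curve, iterating over the small balls containing the $\gamma_j$'s) gives polynomial convexity of $\Gamma$ directly. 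Alternatively, since the $\gamma_j$ lie in pairwise disjoint balls on which $\widehat{\K}=\K$, the hull $\widehat{\Gamma}$ is contained in $\K\cap(\bigcup \overline B_{p_j}(r_j))$, and the local maximum modulus principle together with polynomial convexity of each $\gamma_j$ forces $\widehat{\Gamma}=\Gamma$.

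\emph{Main obstacle.} The routine part is the iterated bookkeeping with Lemma~\ref{lem_unions}; the one point demanding care is ensuring that after affine rescaling and translation the tube $(\K_j, G_j)$ remains both polynomially convex \emph{and} TRC cobordant to $(\N_j, F_j)$ — i.e., that the model-matching done via Theorem~\ref{thm_inequivmodels} survives the maps $A_j$. This is exactly what the remark following Lemma~\ref{lem_TRcob} provides (the TRC cobordism class of $(\N,F)$ contains all $(A(\N), F\circ A)$ for $A: z\mapsto rz+b$), so the obstacle is really just one of careful citation rather than new mathematics; the genuinely substantive input, the existence of polynomially convex index-$1$ representatives in every homotopy class with slice degree $1$, has already been discharged in Theorem~\ref{thm_inequivmodels} and Lemma~\ref{lem_polcvx}.
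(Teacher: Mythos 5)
Your overall scheme (match each $(\N_j,F_j)$ to $(\M^1,H^1)$ or $(\M^2,H^2)$ via Theorem~\ref{thm_inequivmodels} and Lemma~\ref{lem_TRcob}, place scaled--translated copies in small balls away from $M$, and get polynomial convexity of the unions from Lemma~\ref{lem_unions}) is the same as the paper's. But the way you run the union step has a genuine gap: you start the iteration of Lemma~\ref{lem_unions} with $X=M$, which requires $M$ to be a polynomially convex compact set. At this stage of the argument $M$ is only \emph{locally} polynomially convex (the remark after Lemma~\ref{lem_emb2}); there is no ``global hypothesis carried into this subsection'' --- making $M$ globally polynomially convex is precisely what the whole construction is trying to achieve, so this invocation is circular. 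Moreover, even granting it, your inference ``$M\cup\K$ polynomially convex, and hence $\K$ polynomially convex'' is false in general: a compact subset of a polynomially convex set need not be polynomially convex (a circle inside a closed disk already fails), and nothing in your argument rules this out for $\K\subset M\cup\K$.

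The fix is the paper's (simpler) route, which you partially gesture at in your alternative for $(iii)$: apply Lemma~\ref{lem_unions} once with $X=\emptyset$ to distinct points $p_1,\dots,p_\ell\in\C^{3k}\setminus M$, obtaining radii $r_1,\dots,r_\ell$, and then shrink the $r_j$ so that the closed balls $\overline B_{p_j}(r_j)$ are disjoint from the compact set $M$ --- this uses only compactness of $M$, not any convexity. Placing the affine copies $\K_j$ inside these balls, item $(iv)$ follows from the lemma with the choice $Q_j=\K_j$ and item $(iii)$ from the choice $Q_j=\gamma_j$ (each is polynomially convex by Lemma~\ref{lem_polcvx} and affine invariance); note the lemma being proved never asks for $M\cup\K$ to be polynomially convex, so the whole iterative scheme with growing $X$ is unnecessary. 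A minor further point: the parametrization of the copy should be the composition of the affine map with $H^{m_j}$ on the model's domain $S^1\times D^{2k}$ (your $H^{m_1}\circ A_1^{-1}$ does not typecheck as a map from $S^1\times D^{2k}$), though this does not affect the substance since the TRC class is preserved under such affine maps, as noted after Lemma~\ref{lem_TRcob}.
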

\begin{proof} First, since $\operatorname{ind}_{F_j}(\N_j)=1$, $(\N_j,F_j)$ is TRC cobordant to either $(\M^1,H^1)$ or $(\M^2,H^2)$ (which are TRC cobordant to each other when $k$ is even). This follows from the characterization of TRC cobordism classes obtained in Lemma~\ref{lem_TRcob}. Without loss of generality, we assume that there is some $\ell_0\in\{1,...,\ell\}$ such that $(\N_1,F_1),...,(\N_{\ell_0},F_{\ell_0})$ are TRC cobordant to $(\M^1,H^1)$, and $(\N_{\ell_0+1},F_{\ell_0+1}),...,(\N_\ell,F_\ell)$ are TRC cobordant to $(\M^2,H^2)$.

Next, let $p_1,...,p_\ell\in\C^{3k}\setminus M$ be distinct points. Let  $r_1,...,r_\ell>0$ be as granted by Lemma~\ref{lem_unions} (for $X=\emptyset$). By shrinking the $r_j$'s further, we may assume that each $\overline B_{p_j}(r_j)$ is disjoint from $M$. Now observe that the CR structure, polynomial convexity (both of the tubes and the enclosed CR-singular curves) and the TRC cobordism class of $(\M^1,H^1)$ and $(\M^2,H^2)$ remain unchanged under transformations of the form $A:z\mapsto rz-p$, where $r>0$ and $p\in\C^{3k}$. For such a map $A$, we call $(A(\M^j),H^j\circ A)$ a {\em copy} of $(\M^j,H^j)$, and it is a parametrized tube in $B_p(r)$ enclosing the CR-singular curve $A(\gamma)$. Now, let $(K_1,G_1),...,(K_{\ell_0},G_{\ell_0})$ be copies of $(\M_1,H^1)$ in $B_{p_1}(r_1),...,B_{p_{\ell_0}}(r_{\ell_0})$, respectively, and $(K_{\ell_0+1},G_{\ell_0+1}),...,(K_{\ell},G_{\ell})$ be copies of $(\M_2,H^2)$ in $B_{p_{\ell_0+1}}(r_{\ell_0+1}),...,B_{p_{\ell}}(r_{\ell})$, respectively. Denote by $\gamma_j$ the CR-singular curve enclosed by $K_j$, $j=1,...,\ell$.

Finally, note that $(i)$ and $(ii)$ hold by construction. Since $K_1,...,K_\ell$ and $\gamma_1,...,\gamma_\ell$ are polynomially convex subsets of $\C^{3k}$ contained in the balls $B_{p_1}(r_1),...,B_{p_\ell}(r_\ell)$, respectively, Lemma~\ref{lem_unions} gives $(iii)$ and $(iv)$. This concludes the proof of Lemma~\ref{lem_arrange}.
\end{proof}

 Fix a $j\in\{1,...,\ell\}$. By the neighborhood replacement result, i.e., Proposition~\ref{prop_NRR}, there is a smooth embedding $\Psi_{\N_j,K_j}:S^1\times 2D^{2k}\rightarrow\C^{3k}$ such that if we set $\mathcal T_j:=\Psi_{\N_j,K_j}(S^1\times 2D^{2k})$, then
	\begin{enumerate}
\item [$(a)$] $\bdy \mathcal T_j=\bdy \N_j$;
\item [$(b)$] a collar of $\bdy \mathcal T_j$ in $\mathcal T_j$, say $\operatorname{coll}(\bdy \mathcal T_j)$, coincides with a collar of $\bdy \N_j$ in $\N_j$;
\item [$(c)$] $K_j\subset \mathcal T_j$, and $\mathcal T_j$ is totally real everywhere except along $\gamma_j\subset K_j$.  
\end{enumerate} 
We let $\N_j'\subset \N_j$ be a slightly thinner tubular neighborhood of $C_j$ chosen so that $\bdy \N_j'\subset \operatorname{coll}(\bdy \mathcal T_j)$. Now, letting $M^*=M\setminus\left(\N_1'\cup\cdots\cup \N_\ell'\right)$, we set   
	\bes
		M_1=M^*\cup\left(\bigcup_{j=1}^\ell \mathcal T_j\right).
	\ees
We note the inclusion map $\iota:M_1\rightarrow\C^{3k}$ is an immersion of $M$ whose image is totally real except along $\Gamma=\gamma_1\cup\cdots\cup\gamma_\ell$, which admits a polynomially convex neighborhood $K$ in $M_1$. Moreover, $\iota$ is a smooth embedding when restricted to a sufficiently small open neighborhood of $K$ in $M_1$. Thus, by the relative version of the weak Whitney embedding theorem, we may perturb $\iota$, keeping it fixed over $K$, to obtain a smooth embedding $j:M\rightarrow \C^{3k}$ such that $K\subset j(M)$, and $j(M)\setminus \Gamma$ is totally real. 

Note that $j(M)\setminus K^\circ $ is a compact totally real smooth submanifold of $\C^{3k}$ with boundary, and $K$ is polynomially convex. Thus, we can apply the following result due to Arosio--Wold (see~\cite[Theorem~1.4]{ArWo17}). {\em Let $N$ be a compact smooth manifold (possibly with boundary) of dimension $d<n$ and let $f:N\rightarrow\Cn$ be a totally real $\cont^\infty$-embedding. Let $X\subset\Cn$ be a compact polynomially convex set. Then for all $s\geq 1$ and for all $\eps>0$, there exists a totally real $\cont^\infty$-embedding $f_\eps:N\rightarrow\Cn$ such that
	\begin{enumerate}
		\item [$a.$] $||f-f_\eps||_{\cont^s(N)}<\eps$ 
		\item [$b.$] $f_\eps=f$ on $f^{-1}(X)$, and
		\item [$c.$] $\widehat{X\cup f_\eps(N)}=X\cup f_\eps (N)$. 
	 \end{enumerate}} 
We set $N=M\setminus j^{-1}(K^\circ)$, $X=K$ and $f=j|_N$, and let $\eps>0$ be arbitrary but fixed. Then, $M'=f_\eps(N\setminus j^{-1}(K))\cup K$ is an embedded submanifold of $\C^{3k}$ that is diffeomorphic to $M$, polynomially convex, and totally real away from $\Gamma\subset M'$, which is a union of finitely many disjoint simple closed curves. This gives items $(1)$ and $(2)$ of the theorem.  

It remains to show that $\cont(M')=\mathcal P(M')$. For this, we use the following result due to O'Farrel--Preskenis--Walsch; see~\cite{OFPrWa84} or \cite[\S 6.3]{St07}. {\em Let $X$ be a compact holomorphically convex set in $\Cn$, and let $X_0$ be a closed subset of $X$ for which $X\setminus X_0$ is a totally real subset of the manifold $\Cn\setminus X_0$. A function $f\in\cont(X)$ can be approximated uniformly on $X$ by functions holomorphic on a neighbourhood of $X$ if and only if $f|_{X_0}$ can be approximated uniformly on $X_0$ by functions holomorphic on $X$.} 

First, we apply the above result (or an earlier version by Harvey--Wells; see [HaWe71]) to $X=\Gamma$ and $X_0=\emptyset$ to obtain that any $f\in\cont(\Gamma)$ can be approximated uniformly on $\Gamma$ by functions holomorphic on a neighbourhood of $\Gamma$. Since $\Gamma$ is polynomially convex, the Oka--Weil theorem allows us to conclude that $\cont(\Gamma)=\mathcal P(\Gamma)$. Thus, $X:=M'$ and $X_0=\Gamma$ satisfy the hypothesis of the O'Farrel--Preskenis--Walsch result, and any $f\in\cont(M')$ can be approximated uniformly on $M'$ by functions holomorphic on a neighbourhood of $M'$. Once again, applying the Oka--Weil theorem to the polynomially convex set $M'$, we obtain that $\mathcal \cont(M')=\mathcal P(M')$.

\subsection{Proof of Theorem~\ref{thm_anstructure}} We let $\iota:M\rightarrow\C^{3k}$ be the embedding granted by Theorem~\ref{t.main}. Recall from the proof of the theorem that the CR-singular set $\Gamma$ of $\iota(M)$ is contained in a polynomially convex compact set $K\subset \iota(M)$. Now, by Lemma~\ref{lem_unions}, we can find a small closed ball $Y\subset\C^{3k}$ that is disjoint from $\iota(M)$ and is such that $K\cup Q$ is polynomially convex for any polynomially convex subset $Q\subset Y$. We now find a copy of $\A$ --- the tube constructed in Subsection~\ref{subsec_Alexander} --- in the ball $Y$, by using an appropriate map of the form $z\mapsto rz-p$, $r>0$, $p\in\C^{3k}$. We abuse notation and call this copy and the Alexander set contained in it $\A$ and $E$, respectively. Since $\widehat E\subset Y$, $K\cup \widehat E$ is polynomially convex.   

Now, let $\N$ be a fully-TR tube in some small totally real ball $B\subset\iota(M)$. Then, by taking any parametrization of $B$ by $D^{2k+1}$ and restricting it to a copy of $S^1\times D^{2k}$ in $D^{2k+1}$, we obtain an admissible parametrization $\N$ whose frame map induces the  trivial element in $[S^1\times S^{2k-1},W_{2k+1,3k}]$. Using the neighborhood replacement result and Lemma~\ref{lem_Alexander}, we replace $\N$ by $\A$. Combined with the relative version of the weak Whitney Embedding Theorem, this gives an embedding $j:M\rightarrow\C^{3k}$ such that 
	\begin{itemize}
\item [$*$] $j$ coincides with $\iota$ on $\iota^{-1}(K)$;
\item [$*$] $j(M)\setminus\Gamma$ is totally real; and 
\item [$*$] there is a totally real ball $B'\subset j(M)$ that contains $\A$.
	\end{itemize}  

We now apply the Aroiso--Wold perturbation result stated in Subsection~\ref{subsec_mainproof} to $N=M\setminus j^{-1}(K)$, $X=\widehat E\cup K$ and $j|_N$. We obtain a smooth embedding of $M$ into $\C^{3k-1}$ whose image $\Sigma$ contains $E\cup K$, is totally real away from $\Gamma$, and $\widehat{\Sigma\cup \widehat E}= \Sigma\cup\widehat E$. Thus, $\widehat{\Sigma}=\widehat{\Sigma\cup E}= \Sigma\cup\widehat E$. Now, if $\widehat E$ was contained in $\Sigma$, then $\Sigma$ would be a polynomially convex manifold that is totally real except along the polynomially convex set $\Gamma$. This is precisely the situation handled in the proof of Theorem~\ref{t.main} using the O'Farrel--Preskenis--Walsch result cited above. Using the same technique, we see that any subset $T$ of such a manifold has the property that $\cont(T)=\mathcal{P}(T)$, and thus is polynomially convex. This contradicts the fact that $ E\subset \Sigma$ is not polynomially convex. Thus, 
$\widehat\Sigma\setminus\Sigma$ is nonempty but contains no analytic disk. 

To show that $\widehat{\Sigma}=h_r(\Sigma)$, we use an argument due to Izzo, who showed in \cite[Section 3]{Iz19} that $E$ satisfies the generalized argument principle, i.e., if $p$ is a polynomial that has a continuous logarithm on $E$, then $0\notin p(E)$. We now apply the following result due to Stolzenberg (\cite{St63}) to $X=E$ and $Y=B'$. {\em If $X\subseteq Y\subset\Cn$ are compact sets such that $X$ satisfies the generalized argument principle and the first {\v C}ech cohomology group $\check{\mathrm{H}}^1(Y,\mathbb{Z})$ vanishes, then $\widehat X\subset h_r(Y)$.} Thus, $\widehat{\Sigma}= \Sigma\cup\widehat{E}\subseteq h_r(\Sigma)$, and the two hulls coincide. 

\bibliography{PolyCvxBiblio}
\bibliographystyle{plain}
\end{document}